\documentclass[twoside]{article}

%\usepackage{aistats2021}
% If your paper is accepted, change the options for the package
% aistats2021 as follows:
%
\usepackage[accepted]{aistats2021}

 % added for accepted
\setlength{\pdfpageheight}{11in}
\setlength{\pdfpagewidth}{8.5in}

\usepackage[utf8]{inputenc} % allow utf-8 input
\usepackage[T1]{fontenc}    % use 8-bit T1 fonts
\usepackage{hyperref}       % hyperlinks
\usepackage{url}            % simple URL typesetting 
\usepackage{booktabs}       % professional-quality tables
\usepackage{amsfonts}       % blackboard math symbols
\usepackage{nicefrac}       % compact symbols for 1/2, etc.
\usepackage{microtype}      % microtypography
\usepackage{amsmath}
\usepackage{amsthm}
\usepackage{mathrsfs}
\usepackage{amssymb}
\usepackage{graphicx}
\usepackage{color}
\usepackage[ruled,vlined,linesnumbered]{algorithm2e}
\usepackage{multirow}
\usepackage{enumitem}
\usepackage{xcolor}
\usepackage{caption} % added.
\usepackage{subcaption} % added.

\usepackage[round]{natbib}
\usepackage[section]{placeins}

\usepackage{mathtools}
%\mathtoolsset{showonlyrefs}

%% macros for editing and commenting
\usepackage[normalem]{ulem} % to use \sout

%% macros for vectors
\newcommand{\va}{{\mathbf{a}}}
\newcommand{\vb}{{\mathbf{b}}}
\newcommand{\vc}{{\mathbf{c}}}
\newcommand{\vd}{{\mathbf{d}}}

\newcommand{\vf}{{\mathbf{f}}}

\newcommand{\vp}{{\mathbf{p}}}

\newcommand{\vs}{{\mathbf{s}}}

\newcommand{\vu}{{\mathbf{u}}}
\newcommand{\vv}{{\mathbf{v}}}

\newcommand{\vx}{{\mathbf{x}}}
\newcommand{\vy}{{\mathbf{y}}}
\newcommand{\vz}{{\mathbf{z}}}

%% macros for matrices
\newcommand{\vA}{{\mathbf{A}}}
\newcommand{\vB}{{\mathbf{B}}}

\newcommand{\vD}{{\mathbf{D}}}

\newcommand{\vI}{{\mathbf{I}}}

\newcommand{\vQ}{{\mathbf{Q}}}

\newcommand{\vV}{{\mathbf{V}}}

\newcommand{\vX}{{\mathbf{X}}}

%% macros for sets

\newcommand{\cL}{{\mathcal{L}}}

\newcommand{\cN}{{\mathcal{N}}}

\newcommand{\cX}{{\mathcal{X}}}

\newcommand{\vareps}{\varepsilon}

%% macros for the real and imaginary parts

%% macros for math notions and operators
 % expectation
\newcommand{\RR}{\mathbb{R}} % real
 % complex
\newcommand{\ZZ}{\mathbb{Z}} % integer
 % symmetric matrix
 % symmetric positive semi-definite matrix
 % symmetric positive definite matrix
 % sign function
\newcommand{\vzero}{\mathbf{0}} % 0 vector
\newcommand{\vone}{{\mathbf{1}}} % 1 vector

\newcommand{\dist}{\mathrm{dist}}    % distance
 % subscript for operator norm
 % subscript for optimal solution
%\newcommand{\supp}{{\mathrm{supp}}} % support
 % probability
 % proximal map
 % vector -> diagonal matrix
%\newcommand{\diag}{{\mathrm{diag}}} % matrix diagonal -> vector
\newcommand{\dom}{{\mathrm{dom}}} % domain
%\newcommand{\rank}{\textnormal{rank}}
%\newcommand{\grad}{{\nabla}}    % gradient
 % trace
 % total variation
\newcommand{\Proj}{{\mathrm{Proj}}} % projection
 % null space
 % dimension

 % conjugate

 % fold into a tensor
 % unfold a tensor
 % data fitting
 % data fitting

% rounding number

\newcommand{\st}{\mathrm{~s.t.~}}

 % previous iteration

 % shrinkage
 % hard thresholding
\DeclareMathOperator*{\argmin}{arg\,min} % argmin
 % argmax
 % Argmin
 % Argmax

\DeclareMathOperator*{\Min}{minimize}

%% macros for environments math equations

\newcommand{\bc}{\begin{center}}
\newcommand{\ec}{\end{center}}

\newcommand{\bdm}{\begin{displaymath}}
\newcommand{\edm}{\end{displaymath}}

\newcommand{\beq}{\begin{equation}}
\newcommand{\eeq}{\end{equation}}

\newcommand{\bfl}{\begin{flushleft}}
\newcommand{\efl}{\end{flushleft}}

\newcommand{\bt}{\begin{tabbing}}
\newcommand{\et}{\end{tabbing}}

\newcommand{\beqn}{\begin{eqnarray}}
\newcommand{\eeqn}{\end{eqnarray}}

\newcommand{\beqs}{\begin{align*}} % no equation numbers
\newcommand{\eeqs}{\end{align*}}  % no equation numbers

%% macros for theorem-like environments

\newtheorem{theorem}{Theorem}%[section]
\newtheorem{claim}{Claim}%[section]
\newtheorem{definition}{Definition}%[section]
\newtheorem{lemma}{Lemma}%[section]
\newtheorem{remark}{Remark}%[section]
\newtheorem{assumption}{Assumption}

\newcommand{\SL}[1]{\textcolor{orange}{SL: #1}}

\begin{document}

\twocolumn[

\aistatstitle{Rate-improved Inexact Augmented Lagrangian Method for Constrained Nonconvex Optimization}

\aistatsauthor{ Zichong Li$^1$ \And Pin-Yu Chen$^{*,2}$ \And  Sijia Liu$^{*,3}$ \And
 Songtao Lu$^{*,2}$ \And Yangyang Xu$^{*,1}$ 
}
\aistatsaddress{$^1$Rensselaer Polytechnic Institute \And  $^2$IBM Research \And $^3$Michigan State University}
%\aistatsauthor{	  Songtao Lu$^*$ \And Yangyang Xu  }
%\aistatsaddress{ IBM Research \And Rensselaer Polytechnic Institute } 
]
% Department of Mathematical Sciences, Rensselaer Polytechnic Institute, Troy, NY 12180
% \footnote{} % \thanks{Department of Mathematical Sciences, Rensselaer Polytechnic Institute, Troy, NY 12180}

\begin{abstract}
First-order methods have been studied for nonlinear constrained optimization within the framework of the augmented Lagrangian method (ALM) or penalty method. We propose an improved inexact ALM (iALM) and conduct a unified analysis for nonconvex problems with either affine equality or nonconvex constraints. Under certain regularity conditions (that are also assumed by existing works), we show an $\tilde{O}(\varepsilon^{-\frac{5}{2}})$ complexity result for a problem with a nonconvex objective and affine equality constraints and an $\tilde{O}(\varepsilon^{-3})$ complexity result for a problem with a nonconvex objective and nonconvex constraints, where the complexity is measured by the number of first-order oracles to yield an $\varepsilon$-KKT solution. Both results are the best known. The same-order complexity results have been achieved by penalty methods. However, two different analysis techniques are used to obtain the results, and more importantly, the penalty methods generally perform significantly worse than iALM in practice. Our improved iALM and analysis close the gap between theory and practice. Numerical experiments on nonconvex problems with affine equality or nonconvex constraints are provided to demonstrate the effectiveness of our proposed method.
\end{abstract}

\section{INTRODUCTION}

First-order methods (FOMs) have been extensively used for solving large-scale optimization problems, partly due to its nice scalability. Compared to second-order or higher-order methods, FOMs generally have much lower per-iteration complexity and much lower requirement on machine memory. A majority of existing works on FOMs focus on problems without constraints or with simple constraints, e.g., \citep{nesterov2013gradient, FISTA2009, ghadimi2016accelerated, carmon2018accelerated, snap19}. Several recent works have made efforts on analyzing FOMs for problems with complicated functional constraints, e.g., \citep{yu2017simple, lin2018level, lin2019inexact-pp, xu2019iter-ialm, xu2018pd-sgd-functional, lu2018iteration, li2019-piALM}.

In this paper, we consider \textit{nonconvex} problems with (\textit{possibly nonlinear}) equality constraints, formulated as
\begin{equation}\label{eq:ncp_eq}
f_0^*:=\min_{\vx\in\RR^n} \big\{f_0(\vx) := g(\vx) + h(\vx), \st \vc(\vx)=\vzero \big\},
\end{equation}
where $g$ is continuously differentiable but possibly nonconvex, $h$ is closed convex but possibly nonsmooth, and $\vc=(c_1, \ldots, c_l):\RR^n\to\RR^l$ is a vector function with continuously differentiable components. Note that an inequality constraint $d(\vx)\le 0$ can be equivalently formulated as an equality constraint $d(\vx)+s=0$ by enforcing the nonnegativity of $s$. In addition, the stationary conditions of an inequality-constrained problem and its reformulation can be equivalent, as we will see later at the end of Section~\ref{sec:complexity}. Hence, we do not lose generality by focusing on equality-constrained problems in the form of \eqref{eq:ncp_eq}. A large class of nonlinear constraint problems can be covered by our formulation. Examples include Neyman-Pearson classification with type-I error \citep{neyman1933ix}, resource allocation with nonlinear budgets \citep{katoh1998resource}, and 
quadratically constrained quadratic program.

\subsection{Related works}
The augmented Lagrangian method (ALM) is one of the most popular approaches for solving nonlinear constrained problems. It first appeared in \citep{powell1969method, hestenes1969multiplier}. Based on the augmented Lagrangian (AL) function, ALM alternatingly updates the primal variable by minimizing the AL function and the Lagrangian multiplier by dual gradient ascent. If the multiplier is fixed to zero, then ALM reduces to a standard penalty method. Early works often used second-order methods, such as the Newton's method, to solve primal subproblems of ALM. With the rapid increase of problem size in modern applications and/or existence of non-differentiable terms, second-order methods become extremely expensive or even inapplicable. Recently, more efforts have been made on integrating first-order solvers into the ALM framework and analyzing the AL-based FOMs.

%\textbf{Convex problems.} 
For convex affinely-constrained problems, \citep{lan2016iteration-alm} presents an AL-based FOM that can produce an $\vareps$-KKT point with $O(\vareps^{-1}|\log\vareps|)$ gradient evaluations and matrix-vector multiplications. This result was extended to convex conic programming \citep{lu2018iteration, aybat2013augmented} and to convex nonlinear constrained problems \citep{li2020augmented, li2019-piALM}. When an $\vareps$-optimal solution is desired, $O(\vareps^{-1})$ complexity results have been established for AL-based FOMs in several works, e.g., \citep{xu2019iter-ialm, xu2017first, ouyang2015accelerated, li2019-piALM, nedelcu2014computational}. For strongly-convex problems, the complexity results can be respectively improved to $O(\vareps^{-\frac{1}{2}}|\log\vareps|)$ for an $\vareps$-KKT point and $O(\vareps^{-\frac{1}{2}})$ for an $\vareps$-optimal solution; see \citep{li2020augmented, li2019-piALM, xu2019iter-ialm, nedelcu2014computational, necoara2014rate} for example. 

For nonconvex problems with affine equality constraints, \citep{jiang2019structured} can find an $\vareps$-KKT solution to a similar variant of our problem \eqref{eq:ncp_eq} with $\tilde{O}( \vareps^{-2})$ complexity. Also, \citep{zhang2020global} achieved $O(\vareps^{-2})$ complexity for nonconvex smooth problems with polyhedral constraints. However, both of their analysis heavily exploited the affinity of $c(\cdot)$ and didn't include the case of nonconvex $c(\cdot)$. 
	%Moreover, the affine constraint problem in our papar is more difficult than the one in \citep{zhang2020global}, including a nonsmooth term in the objective and a more complicated constraint set.} \YX{No need to compare to them about the affine-constrained cases.}

%\textbf{Nonconvex problems.} 
For problems with nonconvex constraints, early works designed and analyzed FOMs in the framework of a penalty method. \citep{cartis2011evaluation} first presents an FOM for minimizing composite functions and then applies it to nonlinear constrained nonconvex optimization within the framework of an exact-penalty method. To obtain an $\vareps$-KKT point, the FOM by \cite{cartis2011evaluation} needs $O(\vareps^{-5})$ gradient evaluations. A follow-up paper by \cite{cartis2014complexity} gives a trust-region based FOM and shows an $O(\vareps^{-2})$ complexity result to produce an $\vareps$-Fritz-John point, which is weaker than an $\vareps$-KKT point. On solving affinely-constrained nonconvex problems, \citep{kong2019complexity-pen} gives a quadratic-penalty-based FOM and establishes an $O(\vareps^{-3})$ complexity result to obtain an $\vareps$-KKT point. When Slater's condition holds, $\tilde O(\vareps^{-\frac{5}{2}})$ complexity results have been shown in \citep{li2020augmented, lin2019inexact-pp}, which consider nonconvex problems with nonlinear convex constraints. While the FOMs in \citep{li2020augmented, lin2019inexact-pp} are penalty-based, the recent work \citep{meloiteration2020} proposes a first-order proximal ALM for affinely-constrained nonconvex problems and obtains an $\tilde{O}(\vareps^{-\frac{5}{2}})$ result. 

Besides AL and penalty-based FOMs, several other FOMs have been designed to solve nonlinear-constrained problems, such as the level-set FOM by \cite{lin2018level} and the primal-dual method by \cite{yu2016primal} for convex problems. FOMs have also been proposed for minimax problems. For example, \citep{hien2017inexact, hamedani2018primal} study FOMs for convex-concave minimax problems, and \citep{nonminmax19,hibsa20,lin2020near} analyzes FOMs for nonconvex-concave minimax problems. While a nonlinear-constrained optimization problem can be formulated as a minimax problem, its KKT conditions are stronger than the stationarity conditions of a nonconvex-concave minimax problem, because the latter with a compact dual domain cannot guarantee primal feasibility. Therefore, stationarity of a minimax problem in \citep{lin2020near} does not imply primal feasibility of our problem.

%\setlength{\tabcolsep}{3pt}

% Deleted problem formulation. Can add back.
\begin{table*}[t]\caption{Comparison of the complexity results of several methods in the literature to our method to produce an $\vareps$-KKT solution to \eqref{eq:ncp_eq}. 
}\label{table:comparison} 
\begin{center}
%\resizebox{1 \textwidth}{!}{ % < 1. 
\begin{tabular}{|c|c|c|c|c|c|} 
\hline
Method & type & objective & constraint & regularity & complexity \\\hline\hline
\multirow{2}{*}{iALM~\citep{li2020augmented}} & \multirow{2}{*}{AL} & strongly convex & convex & none & $\tilde{O}(\vareps^{-\frac{1}{2}})$\\ 
 &  &  convex & convex & none & $\tilde{O}(\vareps^{-1})$\\ \hline
QP-AIPP~\citep{kong2019complexity-pen}& penalty & nonconvex & convex & none & $\tilde{O}(\vareps^{-3})$\\\hline
HiAPeM~\citep{li2020augmented} & hybrid & nonconvex & convex & Slater's condition & $\tilde{O}(\vareps^{-\frac{5}{2}})$\\\hline
\multirow{3}{*}{iPPP~\citep{lin2019inexact-pp}} & \multirow{3}{*}{penalty} & \multirow{3}{*}{nonconvex} & convex & Slater's condition & $\tilde{O}(\vareps^{-\frac{5}{2}})$\\ 
& & & nonconvex & none & $\tilde{O}(\vareps^{-4})$\\
& & & nonconvex & Assumption~\ref{assumption:regularity} & $\tilde{O}(\vareps^{-3})$\\\hline
iALM~\citep{sahin2019inexact} & AL & nonconvex & nonconvex & Assumption~\ref{assumption:regularity} & $\tilde{O}(\vareps^{-4})$\\\hline
\multirow{2}{*}{this paper} & \multirow{2}{*}{AL} & \multirow{2}{*}{nonconvex} & convex & Assumption~\ref{assumption:regularity} & $\tilde{O}(\vareps^{-\frac{5}{2}})$\\
& & & nonconvex & Assumption~\ref{assumption:regularity} & $\tilde{O}(\vareps^{-3})$\\\hline
\end{tabular}
%}
\end{center}
\end{table*}

\subsection{Contributions}\label{contributions}
%\SL{[`Motivation on inexact method is lacking'. I had this question till Sec. 2.3. a) It may not be clear to readers what do we refer to `inexact', and why `inexact' best for solving nonlinear constrained nonconvex problems. `inexact' first appears in contribution, but we said 'improved' inexact. Thus,  it  should be explained much earlier.]}
Our contributions are three-fold. First, we propose a novel FOM in the framework of \textit{inexact ALM} (iALM) for nonconvex optimization problems with nonlinear (possibly nonconvex) constraints. Due to nonlinearity and large-scale, it is impossible to exactly solve primal subproblems of ALM, and the iALM instead solves each subproblem approximately to a certain desired accuracy. Different from existing works on iALMs, we use an {inexact proximal point method (iPPM)} to solve each ALM subproblem. The use of iPPM leads to more stable numerical performance and also better theoretical results. Second, we conduct complexity analysis to the proposed iALM. Under a regularity condition, we obtain an $\tilde O(\vareps^{-\frac{5}{2}})$ result if the constraints are convex and an $\tilde O(\vareps^{-3})$ result if the constraints are nonconvex. This yields a substantial improvement over the best known complexity results of AL-based FOMs, $\tilde O(\vareps^{-3})$ \citep{li2020augmented} and $\tilde O(\vareps^{-4})$ \citep{sahin2019inexact} (see Remark \ref{remark:error}) respectively for the aforementioned convex and nonconvex constrained cases. 
% The existing best results 
% of AL-based FOMs are $\tilde O(\vareps^{-3})$ and $\tilde O(\vareps^{-4})$ respectively for the two cases. 
While quadratic-penalty-based FOMs (under the same regularity condition as what we assume for nonconvex-constraint problems) \citep{lin2019inexact-pp} have achieved the same-order results as ours, but their empirical performance is generally (much) worse. Hence, our results close the gap between theory and practice. Thirdly, our algorithm and analysis are unified for the convex-constrained and nonconvex-constrained cases. Existing works on penalty-based FOMs such as \citep{lin2019inexact-pp} need different algorithmic designs and also different analysis techniques to obtain the $\tilde O(\vareps^{-\frac{5}{2}})$ and $\tilde O(\vareps^{-3})$ results, separately for the convex-constrained and nonconvex-constrained cases.

\begin{remark}\label{remark:error}
An $\tilde{O}(\vareps^{-3})$ complexity is claimed in Corollary 4.2 in~\citep{sahin2019inexact}. However, this complexity is based on an existing result that was not correctly referred to. The authors claimed that the complexity of solving each nonconvex composite subproblem is $O\left(\frac{\lambda_{\beta_k}^2 \rho^2}{\vareps_{k+1}}\right)$, which should be $O\left(\frac{\lambda_{\beta_k}^2 \rho^2}{\vareps_{k+1}^2}\right)$; see~\citep{sahin2019inexact} for the definitions of $\lambda_{\beta_k}, \rho, \vareps_{k+1}$. Using the correctly referred result and following the same proof in \citep{sahin2019inexact}, we get a total complexity of $\tilde{O}(\vareps^{-4})$.
\end{remark}

\subsection{Complexity comparison on different methods}

In Table \ref{table:comparison}, we summarize our complexity results and several existing ones of first order methods to produce an $\vareps$-KKT solution to \eqref{eq:ncp_eq}. We consider several cases based on whether the objective and the constraints are convex. Here, constraints being convex means that the feasible set is convex, or in other words, equality constraint functions must be affine and inequality constraint functions must be convex. Our result matches the best-known existing results, which are achieved by penalty-type methods such as the iPPP by \cite{lin2019inexact-pp}. In practice, AL-type methods usually significantly outperform penalty-type methods. Hence, our method is competitive in theory and can be significantly better in practice, as we demonstrated in the numerical experiments.

\subsection{Notations, definitions, and assumptions}

We use $\|\cdot\|$ for the Euclidean norm of a vector and the spectral norm of a matrix. For a positive integer, $[n]$ denotes the set $\{1,\ldots,n\}$. The big-$O$ notation is used with standard meaning, while $\tilde O$ suppresses all logarithmic terms of $\vareps$. Given $\vx\in\dom(h)$, we denote $J_c(\vx)$ as the Jacobi matrix of $\vc$ at $\vx$. We denote the distance function between a vector $\vx$ and a set $\cX$ as $\dist(\vx,\cX) = \min_{\vy \in \cX} \Vert \vx-\vy \Vert$.
The augmented Lagrangian (AL) function of \eqref{eq:ncp_eq} is
\begin{equation}\label{eq:al-fun}
\cL_\beta(\vx,\vy) = f_0(\vx) + \vy^\top \vc(\vx) + \frac{\beta}{2}\|\vc(\vx)\|^2,
\end{equation}
where $\beta>0$ is a penalty parameter, and $\vy\in\RR^l$ is the multiplier vector. 

\begin{definition}[$\vareps$-KKT point]\label{def:eps-kkt}
Given $\vareps \geq 0$, a point $\vx \in \RR^n$ is called an $\vareps$-KKT point to \eqref{eq:ncp_eq} if there is a vector $\vy\in\RR^l$ such that
\begin{equation}\label{eq:kkt}
\Vert \vc(\vx) \Vert \leq \vareps,\quad
\dist\left(\vzero, \partial f_0(\vx)+J_c^\top(\vx) \  \vy \right) \leq \vareps.
\end{equation}
\end{definition}

\begin{definition}[$L$-smoothness]
A differentiable function $f$ on $\RR^n$ is $L$-smooth if $\|\nabla f(\vx_1)-\nabla f(\vx_2)\|\le L\|\vx_1-\vx_2\|$ for all $\vx_1, \vx_2\in\RR^n$. 
\end{definition}

\begin{definition}[$\rho$-weakly convex]
A function $g$ is $\rho$-weakly convex if $g+\frac{\rho}{2}\|\cdot\|^2$ is convex.
\end{definition}

\begin{remark}
If $f$ is $L$-smooth, then it is also $L$-weakly convex. However, the weak-convexity constant of a differentiable function can be much smaller than its smoothness constant. For example, if $f(\vx)=\frac{1}{2}\vx^\top\vQ\vx+\vc^\top\vx$ where $\vQ$ is a symmetric but indefinite matrix, then the smoothness constant of $f$ is $\|\vQ\|$, and its weak-convexity constant is the negative of the smallest eigenvalue of $\vQ$.
\end{remark}

Throughout the paper, we make the following assumptions about \eqref{eq:ncp_eq}. Examples that satisfy these assumptions will be given in the experimental section.
\begin{assumption}[smoothness and weak convexity]\label{assump:smooth-wc}
The function $g$ in the objective of \eqref{eq:ncp_eq} is $L_0$-smooth and $\rho_0$-weakly convex. For each $j\in[l]$, $c_j$ is $L_j$-smooth and $\rho_j$-weakly convex.
\end{assumption}

\begin{assumption}[bounded domain]\label{assump:composite}
$h$ is a simple closed convex function with a compact domain, i.e.,
\begin{equation}\label{eq:def-D}
D =: \max_{\vx,\vx'\in \dom(h)}\|\vx-\vx'\| <\infty.
\end{equation} 
\end{assumption}

\section{A NOVEL AL-BASED FOM WITH IMPROVED CONVERGENCE RATE} \label{sec:alg} % should reduce title length (since capitalized)?
In this section, we present a novel FOM (see Algorithm~\ref{alg:ialm} below) for solving \eqref{eq:ncp_eq}. It follows the standard ALM framework, similar to  AL-based FOMs \citep{sahin2019inexact, xu2019iter-ialm}. Notably, different from existing works, we use an inexact proximal point method (iPPM) to approximately solve each ALM subproblem. The complexity result of iPPM has the best dependence on the smoothness constant. This enables us to obtain order-reduced complexity results by geometrically increasing the penalty parameter in ALM, as compared to the AL-based FOMs \citep{li2020augmented, sahin2019inexact} for nonconvex constrained optimization. % Below is newly added.
Our whole algorithm has three layers. We analyze the inner algorithm~\ref{alg2} in Section~\ref{sec:apg}, the middle algorithm~\ref{alg:ippm} in Section~\ref{sec:ippm}, and the outer algorithm~\ref{alg:ialm} in Section~\ref{sec:ialm}.

\begin{algorithm}[h]
	\caption{Accelerated proximal gradient method: $\mathrm{APG}(G,H,\mu, L_G,\vareps)$}\label{alg2}%\label{1}
	\DontPrintSemicolon
	\textbf{Initialization:} choose $\bar{\vx}^{-1}\in \dom(H)$ and set $\alpha=\sqrt{\frac{\mu}{L_G}}$;
	let 
	\begin{align*}\bar{\vx}^0=\vx^{0}=\argmin_{\vx} &~\langle \nabla G(\bar{\vx}^{-1}),\vx \rangle\\[-0.1cm]
	&~\textstyle+\frac{L_G}{2}\left\Vert \vx-\bar{\vx}^{-1}\right\Vert^2+H(\vx).\end{align*}\;
	\vspace{-0.4cm}
	\For{$t=0,1,\ldots$}{
	Update the iterate by
	    \small
		\begin{align}
		&\textstyle\vx^{t+1}=\argmin_{\vx} \langle \nabla G(\bar{\vx}^t),\vx \rangle+\frac{L_G}{2}\left\Vert \vx-\bar{\vx}^t\right\Vert^2+H(\vx), \label{alg2-1} \\
		&\textstyle\bar{\vx}^{t+1}=\vx^{t+1}+\frac{1-\alpha}{1+\alpha}(\vx^{t+1}-\vx^t). \label{alg2-2}
		\end{align}
		\normalsize
	\textbf{if} $\dist\big(-\nabla G(\vx^{t+1}), \partial H(\vx^{t+1}) \big)\leq \vareps$, \textbf{then} output $\vx^{t+1}$ and stop.	
	}
\end{algorithm}

\subsection{Accelerated proximal gradient (APG) method for convex composite problems} \label{sec:apg}

The kernel problems that we solve are a sequence of convex composite problems in the form of
\begin{equation}\label{eq:comp-prob}
\Min_{\vx\in\RR^n}~ F(\vx):=G(\vx)+H(\vx),
\end{equation}
where $G$ is $\mu$-strongly convex and $L_G$-smooth, and $H$ is a closed convex function. Various optimal FOMs (e.g., \cite{nesterov2013gradient, nesterov2004introductory, FISTA2009}) have been designed to solve \eqref{eq:comp-prob}. We choose the FOM used by \cite{li2020augmented} for the purpose of obtaining near-stationary points. Its pseudocode is given in Algorithm~\ref{alg2}.

The next lemma is from \cite[Lemma~3]{li2020augmented}. %\cite[Lemma~3]{li2020augmented}. 
It gives the complexity result of Algorithm \ref{alg2}.

\begin{lemma}\label{lem:total-iter-apg} % updated from ''p3-lemma-5''. different!
Given $\vareps>0$, within at most $T$ iterations, Algorithm~\ref{alg2} will output a solution $\vx^{T}$ that satisfies $\dist\big(\vzero,\partial F(\vx^{T})\big) \le \vareps$, %$\|\bar\vx^T -\vx^{T+1}\|\le\delta$, 
where 
$$\textstyle T=\left\lceil \sqrt{\frac{L_G}{\mu}}\log\frac{64L_G^2\left(L_G\Vert \vx^{-1}-\vx^* \Vert^2+\mu\Vert \vx^*-\vx^0 \Vert^2\right)}{ \vareps^2 \mu} + 1 \right\rceil.$$
\end{lemma}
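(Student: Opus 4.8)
The plan is to analyze Algorithm~\ref{alg2} as an instance of an accelerated proximal gradient method on a strongly convex composite objective and to track a Lyapunov/potential quantity that decays geometrically with rate governed by $\alpha=\sqrt{\mu/L_G}$. Concretely, I would first recall the standard one-step progress inequality for the proximal-gradient step \eqref{alg2-1}: using $L_G$-smoothness of $G$ together with $\mu$-strong convexity of $G$ (and convexity of $H$), for any comparison point $\vx$ one gets an estimate of the form $F(\vx^{t+1}) - F(\vx) \le \frac{L_G}{2}\|\vx - \bar{\vx}^t\|^2 - \frac{L_G}{2}\|\vx - \vx^{t+1}\|^2 - \frac{\mu}{2}\|\vx-\bar\vx^t\|^2$ (up to the usual rearrangement). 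Plugging in $\vx=\vx^*$, the unique minimizer, and combining this with the momentum update \eqref{alg2-2} in the usual Nesterov telescoping argument yields a potential $\Phi_t := (F(\vx^t)-F(\vx^*)) + \frac{L_G}{2}\|\vx^* - \vz^t\|^2$ for an appropriately defined auxiliary sequence $\vz^t$, satisfying $\Phi_{t+1} \le (1-\alpha)\Phi_t$. Iterating from the initialization gives $F(\vx^t)-F(\vx^*) \le (1-\alpha)^t \Phi_0$ with $\Phi_0 \lesssim L_G\|\vx^{-1}-\vx^*\|^2 + \mu\|\vx^*-\vx^0\|^2$ after bounding the first prox step.

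Second, I would convert a bound on the function gap into a bound on the stationarity measure $\dist(\vzero,\partial F(\vx^{t+1}))$, which is the actual stopping criterion. The natural route: the optimality condition of the subproblem \eqref{alg2-1} gives an explicit element of $\partial H(\vx^{t+1})$, namely $-\nabla G(\bar\vx^t) - L_G(\vx^{t+1}-\bar\vx^t)$, so that $\nabla G(\vx^{t+1}) - \nabla G(\bar\vx^t) - L_G(\vx^{t+1}-\bar\vx^t) \in \partial F(\vx^{t+1})$, whose norm is at most $(2L_G)\|\vx^{t+1}-\bar\vx^t\|$ by $L_G$-smoothness. Hence it suffices to show $\|\vx^{t+1}-\bar\vx^t\|$ is small, and this in turn is controlled by the function gap via strong convexity: both $F(\vx^{t+1})-F(\vx^*)$ and the gap at $\bar\vx^t$-type quantities are $(1-\alpha)^t$-geometrically small, and $\|\vx^{t+1}-\bar\vx^t\|^2$ can be bounded by a constant multiple of (gap at $\vx^t$) + (gap at $\vx^{t+1}$) using $\|\vx^{t+1}-\vx^*\|^2 \le \frac{2}{\mu}(F(\vx^{t+1})-F(\vx^*))$ and the triangle inequality together with the momentum relation. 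This produces $\dist(\vzero,\partial F(\vx^{t+1}))^2 \le C\, L_G^2 (1-\alpha)^t (L_G\|\vx^{-1}-\vx^*\|^2 + \mu\|\vx^*-\vx^0\|^2)/\mu$ for an absolute constant $C$ (the factor $64$ in the statement should pop out of these rearrangements).

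Third, I would solve for the iteration count: requiring the right-hand side to be at most $\vareps^2$ and using $(1-\alpha)^t \le e^{-\alpha t}$ gives $t \ge \frac{1}{\alpha}\log\!\big(C L_G^2 (L_G\|\vx^{-1}-\vx^*\|^2+\mu\|\vx^*-\vx^0\|^2)/(\vareps^2\mu)\big)$, i.e. $T = \big\lceil \sqrt{L_G/\mu}\,\log(\cdots) + 1\big\rceil$, matching the claimed bound once the constant is pinned to $64$. Since this lemma is quoted verbatim from \cite[Lemma~3]{li2020augmented}, the honest statement is that the proof is exactly theirs; in this paper it would be cited rather than reproved.

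I expect the main obstacle to be the bookkeeping in the second step — translating the geometric decay of the \emph{function gap} into geometric decay of the \emph{gradient-mapping/subdifferential residual} with clean constants. The subtlety is that $\|\vx^{t+1}-\bar\vx^t\|$ involves the extrapolated point $\bar\vx^t$, not just $\vx^t,\vx^{t+1}$, so one must carefully use the momentum identity \eqref{alg2-2} and the fact that $\frac{1-\alpha}{1+\alpha}<1$ to keep the extrapolation from amplifying the error, and then trace how all the $L_G$ and $\mu$ factors accumulate to yield precisely the $64 L_G^2(\cdots)/(\vareps^2\mu)$ inside the logarithm. Everything else (the smoothness inequality, the Nesterov telescoping, the final log-solve) is standard.
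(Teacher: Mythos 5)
Your proposal is correct in outline, but note that the paper itself offers no proof of this lemma: it is imported verbatim from \cite[Lemma~3]{li2020augmented}, exactly as you observe at the end of your third step, so there is nothing in this paper to compare against beyond the citation. Your sketch --- geometric decay of the function gap via the Nesterov potential with ratio $1-\alpha$, conversion to the subdifferential residual through the prox optimality condition $\nabla G(\vx^{t+1})-\nabla G(\bar\vx^t)-L_G(\vx^{t+1}-\bar\vx^t)\in\partial F(\vx^{t+1})$ and the bound $2L_G\|\vx^{t+1}-\bar\vx^t\|$, then strong convexity to control the distances --- is precisely the argument used in the cited source, so the approach is the right one and the only remaining work is the constant bookkeeping you already flag.
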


\subsection{Inexact proximal point method (iPPM) for nonconvex composite problems} \label{sec:ippm}
Each primal subproblem of the ALM for \eqref{eq:ncp_eq} is a nonconvex composite problem in the form of
\begin{equation}\label{eq:nc_prob}
\Phi^*= \Min_{\vx\in\RR^n} ~\big\{\Phi(\vx) := \phi(\vx)+\psi(\vx)\big\},
\end{equation}
where $\phi$ is $L_\phi$-smooth and $\rho$-weakly convex, and $\psi$ is closed convex. We propose to use the iPPM to approximately solve the ALM subproblems. The iPPM framework has appeared in \citep{kong2019complexity-pen}. Different from \citep{kong2019complexity-pen}, we propose to use APG in Algorithm~\ref{alg2} to solve each iPPM subproblem. The pseudocode of our iPPM is shown in Algorithm~\ref{alg:ippm}. %However, we use a different subsolver to solve each subproblem of iPPM. 
It appears that our iPPM has more stable numerical performance. 
%We use APG in the framework of the iPPM to approximately solve a nonconvex composite problem  

\iffalse
\begin{definition}[$\vareps$-stationary point]
Given $\vareps>0$, a point $\vx\in\dom(\psi)$ is an $\vareps$-stationary point of \eqref{eq:nc_prob}, if $\dist\big(\vzero, \partial \Phi(\vx)\big) \le \vareps$.
\end{definition}

\fi

%Goal: find $\vx$ such that $\dist(0, \partial \Phi(\vx)) \le \vareps$. 

\begin{algorithm}[h] 
\caption{Inexact proximal point method (iPPM) for \eqref{eq:nc_prob}: iPPM($\phi,\psi,\vx^0,\rho, L_\phi,\vareps$)} \label{alg:ippm}
\DontPrintSemicolon
\textbf{Input:} $\vx^0\in\dom(\psi)$, smoothness $L_\phi$, weak convexity $\rho$, stationarity tolerance $\vareps$\;
\For{$k=0,1,\ldots,$}{
Let $G(\cdot)=\phi(\cdot)+\rho\Vert \cdot - \vx^k \Vert^2$\; 
Call Algorithm~\ref{alg2} to obtain $\vx^{k+1} \leftarrow \mathrm{APG}(G, \psi, \rho, L_\phi+2\rho, \frac{\vareps}{4})$\;
\textbf{if} $2\rho \Vert \vx^{k+1}-\vx^k \Vert \le \frac{\vareps}{2}$, \textbf{then} return $\vx^{k+1}$.
}
\end{algorithm}

The next theorem gives the complexity result. Its proof is given in the supplementary materials.
\begin{theorem}\label{thm:ippm-compl}
Suppose $\Phi^*$ is finite. Algorithm~\ref{alg:ippm} must stop within $T$ iterations, where
\begin{equation} \label{eq9}
\textstyle T = \left\lceil \frac{32 \rho}{\vareps^2} (\Phi(\vx^0)-\Phi^*) \right\rceil.
\end{equation}
The output $\vx^S$ must be an $\vareps$-stationary point of \eqref{eq:nc_prob}, i.e., $\dist(\vzero, \partial \Phi(\vx^S)) \le \vareps$. In addition, if $\dom(\psi)$ is compact and has diameter $D_\psi < \infty$, then the total complexity is $O\left( \frac{\sqrt{\rho L_\phi}}{\vareps^2} [\Phi(\vx^0)-\Phi^*] \log \frac{D_\psi}{\vareps} \right)$.
%\begin{equation*}
%O\left( \frac{32 \rho}{\vareps^2}[\Phi(\vx^0)-\Phi(\vx^T)]_+\sqrt{\frac{L}{\rho}}\log \frac{1}{\vareps} \right) = O\left( \frac{\sqrt{L\rho}}{\vareps^2} [\Phi(\vx^0)-\Phi(\vx^T)]_+ \log \frac{D_\psi}{\vareps} \right).
%\end{equation*}
\end{theorem}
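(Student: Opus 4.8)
The plan is to follow the classical analysis of the proximal point method for weakly convex problems, with the only new ingredient being that each prox-subproblem is itself solved inexactly by APG to accuracy $\vareps/4$. First I would set up the per-iteration descent. At iteration $k$, the auxiliary function is $G_k(\cdot)=\phi(\cdot)+\rho\|\cdot-\vx^k\|^2$, which is $\rho$-strongly convex (since $\phi$ is $\rho$-weakly convex) and $(L_\phi+2\rho)$-smooth, so Algorithm~\ref{alg2} applies. Its output $\vx^{k+1}$ satisfies $\dist(\vzero,\partial(G_k+\psi)(\vx^{k+1}))\le\vareps/4$, i.e. there exists $\vxi^{k+1}$ with $\|\vxi^{k+1}\|\le\vareps/4$ and $\vxi^{k+1}\in\nabla\phi(\vx^{k+1})+2\rho(\vx^{k+1}-\vx^k)+\partial\psi(\vx^{k+1})$. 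Two consequences follow. (i) Using strong convexity of $G_k+\psi$ with modulus $\rho$ and comparing the value at $\vx^{k+1}$ with the value at $\vx^k$ (where the prox-objective equals $\Phi(\vx^k)$), one gets a descent inequality of the form $\Phi(\vx^{k+1})+\rho\|\vx^{k+1}-\vx^k\|^2 \le \Phi(\vx^k) + (\text{small error from }\vxi^{k+1})$; after absorbing the inexactness (the $\vareps/4$ term, controlled because the iterate stays in a bounded region or because we only need it when the stopping test fails, where $2\rho\|\vx^{k+1}-\vx^k\|>\vareps/2$), this yields $\Phi(\vx^{k})-\Phi(\vx^{k+1})\ge \frac{\rho}{2}\|\vx^{k+1}-\vx^k\|^2$ or a comparable fraction. (ii) Rearranging the subgradient inclusion, $\vxi^{k+1}-2\rho(\vx^{k+1}-\vx^k)\in\partial\Phi(\vx^{k+1})$, so $\dist(\vzero,\partial\Phi(\vx^{k+1}))\le \|\vxi^{k+1}\|+2\rho\|\vx^{k+1}-\vx^k\|\le \vareps/4 + 2\rho\|\vx^{k+1}-\vx^k\|$. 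Hence when the stopping criterion $2\rho\|\vx^{k+1}-\vx^k\|\le\vareps/2$ triggers, the returned point is an $\vareps$-stationary point; this proves the stationarity claim.

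Next I would bound the outer iteration count. Telescoping the descent inequality from $k=0$ to $T-1$ gives $\sum_{k=0}^{T-1}\frac{\rho}{2}\|\vx^{k+1}-\vx^k\|^2 \le \Phi(\vx^0)-\Phi^*$ (using that $\Phi^*$ is finite as a lower bound). If the algorithm has not stopped before iteration $T$, every term satisfies $2\rho\|\vx^{k+1}-\vx^k\|>\vareps/2$, i.e. $\|\vx^{k+1}-\vx^k\|^2 > \vareps^2/(16\rho^2)$, so the left side exceeds $T\cdot\frac{\rho}{2}\cdot\frac{\vareps^2}{16\rho^2}=\frac{T\vareps^2}{32\rho}$. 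Combining, $T < \frac{32\rho}{\vareps^2}(\Phi(\vx^0)-\Phi^*)$, which gives the stated bound $T=\lceil \frac{32\rho}{\vareps^2}(\Phi(\vx^0)-\Phi^*)\rceil$ on the stopping time.

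Finally, for the total first-order oracle complexity I would invoke Lemma~\ref{lem:total-iter-apg} applied to each call $\mathrm{APG}(G_k,\psi,\rho,L_\phi+2\rho,\vareps/4)$. Since $G_k+\psi$ is $\rho$-strongly convex, the condition number is $(L_\phi+2\rho)/\rho$, so each APG call costs $O\!\left(\sqrt{(L_\phi+2\rho)/\rho}\,\log\frac{L_G(\cdot)}{\vareps}\right)$ inner iterations; when $\dom(\psi)$ has diameter $D_\psi$, the distances $\|\vx^{-1}-\vx^*\|$, $\|\vx^*-\vx^0\|$ inside the log are bounded by $D_\psi$, and the logarithmic factor becomes $O(\log\frac{D_\psi}{\vareps})$ (absorbing the polynomial-in-constants terms into the $\tilde O$/log). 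Here $\sqrt{(L_\phi+2\rho)/\rho}=O(\sqrt{L_\phi/\rho})$ when $L_\phi\gtrsim\rho$, so per outer iteration the cost is $O(\sqrt{L_\phi/\rho}\log\frac{D_\psi}{\vareps})$ gradient evaluations; wait—this must be multiplied by the outer count $T=O(\frac{\rho}{\vareps^2}[\Phi(\vx^0)-\Phi^*])$, giving $O\!\left(\frac{\sqrt{\rho L_\phi}}{\vareps^2}[\Phi(\vx^0)-\Phi^*]\log\frac{D_\psi}{\vareps}\right)$, as claimed.

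The main obstacle I anticipate is handling the inexactness cleanly in step (i): a naive descent bound picks up a cross term like $\langle\vxi^{k+1},\vx^{k+1}-\vx^k\rangle$ or $\|\vxi^{k+1}\|\cdot\|\vx^{k+1}-\vx^k\|$, which is not obviously dominated by $\frac{\rho}{2}\|\vx^{k+1}-\vx^k\|^2$ when $\|\vx^{k+1}-\vx^k\|$ is tiny. The fix is to observe that the telescoping/counting argument only needs the descent inequality on iterations where the stopping test has \emph{not} yet fired, i.e. where $\|\vx^{k+1}-\vx^k\|>\vareps/(4\rho)$, so that $\|\vxi^{k+1}\|\le\vareps/4\le\rho\|\vx^{k+1}-\vx^k\|$ and the cross term is absorbed with room to spare (possibly at the cost of replacing $\frac{\rho}{2}$ by a smaller constant, which only affects the absolute constant $32$); alternatively one can use the strong-convexity inequality for $G_k+\psi$ directly at the point $\vx^k$, which produces a clean $\frac{\rho}{2}\|\vx^{k+1}-\vx^k\|^2$ term plus an error term of order $\vareps\|\vx^{k+1}-\vx^k\|$ that is handled the same way. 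Getting the bookkeeping of constants to land exactly on $32$ is the only genuinely fiddly part; everything else is routine.
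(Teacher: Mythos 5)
Your proposal is correct and follows essentially the same route as the paper's proof: per-iteration descent from the $\rho$-strong convexity of the prox-subproblem $\Phi_k(\cdot)=\Phi(\cdot)+\rho\|\cdot-\vx^k\|^2$, telescoping against the stopping test to obtain the $\lceil 32\rho(\Phi(\vx^0)-\Phi^*)/\vareps^2\rceil$ outer bound, a triangle inequality for $\vareps$-stationarity, and Lemma~\ref{lem:total-iter-apg} for the total APG count. The only cosmetic difference is that the paper absorbs the inexactness via the strongly-convex suboptimality bound $\Phi_k(\vx^{k+1})-\Phi_k^*\le\delta^2/(2\rho)$ with $\delta=\vareps/4$ (equivalently, Young's inequality applied to your cross term), which yields a clean additive error of $T\delta^2/(2\rho)$ in the telescoped sum and sidesteps the case analysis you flag as the ``fiddly part.''
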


\begin{remark}
A similar result has been shown by \cite{kong2019complexity-pen}. It has better dependence on $L_\phi$ than that by \cite{ghadimi2016accelerated}. In addition, in the worst case, $\Phi(\vx^0)-\Phi^*$ is in the same order of $L_\phi$. However, we will see that for our case, $\Phi(\vx^0)-\Phi^*$ can be uniformly bounded when Algorithm~\ref{alg:ippm} is applied to solve subproblems of ALM even if the penalty parameter (that is proportional to the smooth constant) in the AL function geometrically increases. As a result, when the smoothness and weak convexity parameters are both $O(\vareps^{-1})$, we can obtain a total complexity of $\tilde{O}\left(\frac{\sqrt{\rho L_{\Phi}}}{\vareps^2}\right) = \tilde{O}(\vareps^{-3})$, which is better than  $\tilde{O}(\frac{L_{\Phi}^2}{\vareps^2}) = \tilde{O}(\vareps^{-4})$ obtained if the method in \citep{ghadimi2016accelerated} is applied. This is the key for us to have order-reduced complexity results, as compared to \citep{sahin2019inexact}.
\end{remark}

\subsection{Inexact augmented Lagrangian method (iALM) for nonlinear constrained problems} \label{sec:ialm}

Now we are ready to present an improved AL-based FOM for solving \eqref{eq:ncp_eq}. Different from existing AL-based FOMs, our method uses iPPM, given in Algorithm~\ref{alg:ippm}, to approximately solve each subproblem, and also its dual step size is adaptive to the primal residual. The pseudocode is shown in Algorithm~\ref{alg:ialm}. 

In the algorithm and the later analysis, we denote 
\begin{subequations}\label{P3-eqn-33}
\begin{align}
&B_0=\max_{\vx\in\dom(h)}\max \big\{|f_0(\vx)|,\left\Vert \nabla g(\vx) \right\Vert\big\}, \nonumber\\
&B_c = \max_{\vx\in\dom(h)}\Vert J_c(\vx) \Vert, \\
&B_i=\max_{\vx\in\dom(h)}\max \big\{|c_i(\vx)|,\left\Vert \nabla c_i(\vx) \right\Vert\big\}, \forall\, i \in [l], \\
%|c_i({\vx})-c_i({\vx'})| \leq B_i \left\Vert {\vx}-{\vx'} \right\Vert, \forall\, {\vx},{\vx'} \in \dom(h), \forall\, i \in [l].
& \textstyle \bar{B}_c = \sqrt{\sum_{i=1}^l B_i^2},\quad \bar{L} = \sqrt{\sum_{i=1}^lL_i^2}, \nonumber \\ 
& \rho_c = \sum_{i=1}^{l} B_i \rho_i, \quad L_c = \sum_{i=1}^{l}B_i L_i + B_i^2, \label{eq:rho_c}
\end{align}
\end{subequations}
%\ZL{Mistake: from $B_iL_i$ to $B_iL_i + B_i^2$.}
%Also, there must exist constants $\hat{B}_c, B_c$ such that
%\begin{equation}\label{eq:Bc}
%\Vert \vc(\vx) \Vert\le \hat{B}_c,\quad  \Vert J_c(\vx) \Vert \le B_c,\,\forall\, \vx\in\dom(h).
%\end{equation}
where $\{\rho_i\}$ and $\{L_i\}$ are given in Assumption~\ref{assump:smooth-wc}. Note that the above constants are all finite under Assumptions~\ref{assump:smooth-wc} and \ref{assump:composite}, and we do not need to evaluate them exactly but only need upper bounds.

%Below we give the inexact augmented Lagrangian method that solves \eqref{eq:ncp_eq}. The parameter settings below are very similar to [Volkan, Algorithm 1].

\begin{algorithm}[h] % to be updated! 111111
\caption{Inexact augmented Lagrangian method (iALM) for \eqref{eq:ncp_eq}}\label{alg:ialm}
\DontPrintSemicolon
\textbf{Initialization:} choose $\vx^0\in\dom(f_0), \vy^0 = \vzero$, $\beta_0>0$ and $\sigma>1$\;
\For{$k=0,1,\ldots,$}{
Let $\beta_k=\beta_0\sigma^k$, $\phi(\cdot)=\cL_{\beta_k}(\cdot,y^k)-h(\cdot)$, and
\begin{equation}\label{eq:hat_rho_k}
\begin{aligned}
&\hat{\rho}_k = \rho_0 + \bar L\|\vy^k\| + \beta_k \rho_c, \\ 
&\hat{L}_k = L_0 + \bar L\|\vy^k\| + \beta_k L_c.
\end{aligned}\vspace{-0.3cm}
\end{equation}\;
Call Algorithm~\ref{alg:ippm} to obtain $\vx^{k+1} \leftarrow \mathrm{iPPM}(\phi,h,\vx^k,\hat{\rho}_k, \hat{L}_k, \vareps)$\;
Update $\vy$ by
\begin{align}
\vy^{k+1} = ~\vy^k  + w_k \vc(\vx^{k+1}),\label{eq:alm-y}
\end{align}
where 
\begin{equation}\label{eq:ialm_step}
\textstyle w_k = w_0 \min\left\{1, \frac{\gamma_k }{\Vert \vc(\vx^{k+1}) \Vert}\right\}.
\end{equation}
}
\end{algorithm}

% \SL{[We need a transition to say why focusing on complexity analysis of Algorithm 3 in the next section. For example, why Algorithm 3 is difficult to analyze (technical challenges). In the next section, we circumvent these challenges and derive our main complexity results.]}

Algorithm~\ref{alg:ialm} follows the standard framework of the ALM. The existing method that is the closest to ours is the iALM by \cite{sahin2019inexact}. The main difference is that we use the iPPM to solve ALM subproblems, while \citep{sahin2019inexact} applies the FOM by \cite{ghadimi2016accelerated}. This change of subroutine, together with our new analysis, leads to order-reduced complexity results under the same assumptions. In addition, we observed from the experiments that our iPPM is more stable and more efficient on solving nonconvex subproblems than the subsolver by~\cite{sahin2019inexact}. %Intuitively, 
%iPPM tackles the nonconvex problem by solving a sequence of perturbed strongly convex problems with more stable solution routines. On the other hand, the subsolver by ~\cite{sahin2019inexact} combines both nonconvexity and Nesterov acceleration together during each update, which attributes to its unstability.

%\yx{compare to Volkan's iALM}

\section{COMPLEXITY RESULTS}\label{sec:complexity}
In this section, we analyze the complexity result of Algorithm~\ref{alg:ialm}. In general, it is difficult to show convergence rates of AL-based FOMs on nonconvex constrained problems mainly due to two reasons. First, a stationary point of the AL function may not be (near) feasible, even a large penalty parameter is used. This is essentially different from penalty-based FOMs. %Second, the rate of a general nonconvex subsolver has a large square dependence on the smoothness parameter. 
Second, the Lagrangian multiplier cannot be bounded if the dual step size is not carefully set. We show that, with a regularity condition and a well-controlled dual step size, our AL-based FOM can circumvent both issues and achieve  best-known convergence rates.

For simplicity, we let 
\begin{equation}\label{eq:gamma-k}
\gamma_k = \frac{(\log 2)^2 \Vert \vc(\vx^0) \Vert}{(k+1)[\log(k+2)]^2},    
\end{equation}
which has been adopted by \cite{sahin2019inexact}. This choice of $\gamma_k$ will lead to a uniform bound on $\{\vy^k\}$ and simplify our analysis. More complicated analysis with general $\{\gamma_k\}$ is given in the supplementary materials.

It is impossible to find a (near) feasible solution of a general nonlinear system in polynomial time. Hence, a certain regularity condition is necessary in order to guarantee near-feasibility. Following  \citep{sahin2019inexact, lin2019inexact-pp}, we assume the regularity condition below on \eqref{eq:ncp_eq}. 
\begin{assumption}[regularity] \label{assumption:regularity}
There is some $v > 0$ such that for any $k\ge1$,
\begin{equation}\label{eq:regularity}
\textstyle v\Vert \vc(\vx^k) \Vert \le \dist \left(-J_c(\vx^k)^{\top} \vc(\vx^k), \frac{\partial h(\vx^k)}{\beta_{k-1}} \right).
\end{equation}
\end{assumption}

\begin{remark}
The intuition of using Assumption~\ref{assumption:regularity} is to ensure near feasibility of a near-stationary point to the AL function. Without any regularity conditions on the nonconvex constraints, one cannot even achieve feasibility. It is unclear whether Assumption~\ref{assumption:regularity} is stronger or weaker than other common regularity conditions such as the Slater's condition and the MFCQ condition.

Several nonconvex examples that satisfy the regularity condition are given in \citep{sahin2019inexact} and \citep{lin2019inexact-pp}, such as the EV and clustering problems in our experiments. %The LCQP problem in section~\ref{sec:experiment-ncvx} also has this property, as proven in Section~\ref{subsec:reg_ex}, where we also provide another convex example satisfying this property.
In Section~\ref{subsec:reg_ex}, we prove that Assumption~\ref{assumption:regularity} holds for all affine equality constrained problems possibly with an additional polyhedral constraint set or a ball constraint set. Hence, the LCQP problem in our experiments also has this property. %Assumption 3 is also proved for many applications (e.g. \citep{sahin2019inexact} and \citep{lin2019inexact-pp}), including our EV and clustering experiments.  
Notice that we only require the existence of $v$ but do not need to know its value in our algorithm. %Also, it's known that without any assumptions on the nonconvex constraints one can't even achieve feasibility. 

\iffalse
% from rebuttal:
We believe assumption 3 is fair for the following three reasons: (i) In section~\ref{subsec:reg_ex}, we proved that assumption 3 holds for all affine equality constrained problems with a polyhedral constraint set or a ball constraint set, including the problem in our LCQP experiment. (ii) It's known that without any assumptions on the nonconvex constraints one can't even achieve feasibility. (iii) Assumption 3 is proved for many applications (e.g. \cite{sahin2019inexact} and \cite{lin2019inexact-pp}), including our EV and clustering experiments. 
\fi
\end{remark}

\subsection{Convex constraint examples with regularity condition} \label{subsec:reg_ex}
In this subsection, we show that the regularity condition in Assumption~\ref{assumption:regularity} can hold for the LCQP problem \eqref{eq:ncQP} that we will test. We prove this for a broader class of problems, namely, affine-equality constraints problems with an additional polyhedral constraint set or $\{\vx\in\RR^n: \vA\vx=\vb, \|\vx\|\le 1\}$. The proofs are given in the supplementary materials.%Let $\vc(\vx) = \vA\vx-\vb$ in the sequel. We assume that $\vA$ has full row-rank without loss of generality and both constraint sets are nonempty.

\subsubsection{polyhedral constraint}
Let $X\subseteq\RR^n$ be a compact polyhedral set and $h(\cdot)=\iota_X(\cdot)$ be the indicator function on $X$. Then for any $\beta>0$ and $\vx\in X$, $\frac{\partial h(\vx)}{\beta}=\cN_X(\vx)$, where $\cN_X$ denotes the normal cone. We have the result in the claim below.
\begin{claim}\label{clm:poly}
	If $X\cap \{\vx\in\RR^n: \vA\vx=\vb\}\neq\emptyset$, then there is a constant $v>0$ such that $\forall\, \vx\in X,$
	\begin{equation}\label{eq:reg-cond-poly}
	v \|\vA\vx-\vb\| \le \dist\left(\vzero, \vA^\top(\vA\vx-\vb)+\cN_X(\vx)\right),%\,\forall\, \vx\in X,    
	\end{equation}
	which implies \eqref{eq:regularity} with $\vc(\vx) = \vA\vx-\vb$ and $h(\vx)=\iota_X(\vx)$. 
\end{claim}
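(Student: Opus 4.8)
The plan is to read \eqref{eq:reg-cond-poly} as a Hoffman-type error bound. First I would reduce the statement: since $\dist\left(\vzero,\vA^\top(\vA\vx-\vb)+\cN_X(\vx)\right)=\inf\{\|\vg\|:\vg\in\vA^\top(\vA\vx-\vb)+\cN_X(\vx)\}$, it suffices to produce a constant $v>0$ such that $\|\vg\|\ge v\|\vA\vx-\vb\|$ for every $\vx\in X$ and every $\vg=\vA^\top(\vA\vx-\vb)+\vn$ with $\vn\in\cN_X(\vx)$. If $\vA\vx=\vb$ the inequality is trivial (the right side is $0$ and $\vzero\in\cN_X(\vx)$), so from now on assume $\vA\vx\neq\vb$.

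Next I would introduce the feasible set $S:=X\cap\{\vz\in\RR^n:\vA\vz=\vb\}$, which is nonempty by hypothesis and closed and convex; let $\vx^*$ be the Euclidean projection of $\vx$ onto $S$, so that $\|\vx-\vx^*\|=\dist(\vx,S)$ and $\vA\vx^*=\vb$. The defining inequality of the normal cone gives $\langle\vn,\vx^*-\vx\rangle\le0$, hence
\[
\langle\vg,\vx-\vx^*\rangle\ \ge\ \langle\vA^\top(\vA\vx-\vb),\vx-\vx^*\rangle\ =\ \langle\vA\vx-\vb,\vA\vx-\vA\vx^*\rangle\ =\ \|\vA\vx-\vb\|^2 .
\]
Combining this with Cauchy--Schwarz yields $\|\vg\|\,\dist(\vx,S)\ge\|\vA\vx-\vb\|^2$.

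The only place polyhedrality is used is the final ingredient, Hoffman's error bound for the linear system describing $S$: writing $X=\{\vz:\vD\vz\le\vd\}$, we have $S=\{\vz:\vD\vz\le\vd,\ \vA\vz=\vb\}$, and Hoffman's lemma supplies a constant $\tau>0$ with $\dist(\vz,S)\le\tau\big(\|(\vD\vz-\vd)_+\|+\|\vA\vz-\vb\|\big)$ for all $\vz$. For $\vz=\vx\in X$ the first residual vanishes, so $\dist(\vx,S)\le\tau\|\vA\vx-\vb\|$. Plugging this into the previous display and dividing by $\|\vA\vx-\vb\|>0$ gives $\|\vg\|\ge\tau^{-1}\|\vA\vx-\vb\|$, so $v:=\tau^{-1}$ works. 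Finally, since $J_c(\vx)=\vA$ when $\vc(\vx)=\vA\vx-\vb$ and $\partial h(\vx)/\beta=\cN_X(\vx)$ for every $\beta>0$ (the normal cone is invariant under positive scaling), \eqref{eq:reg-cond-poly} is exactly \eqref{eq:regularity} for this $\vc$ and $h$, at every iterate $\vx^k\in X=\dom(h)$.

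The main obstacle is to invoke Hoffman's bound at the right level of generality: because the point lies in $X$, only the equality residual $\|\vA\vx-\vb\|$ survives in the estimate for $\dist(\vx,S)$; one must not replace $S$ by the affine subspace $\{\vA\vz=\vb\}$, whose distance to $\vx$ need not be comparable to $\dist(\vx,S)$ (that subspace can lie outside $X$ except near $S$). Everything else is a short normal-cone computation.
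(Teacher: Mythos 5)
Your argument is correct, but it follows a genuinely different route from the paper's. The paper proves \eqref{eq:reg-cond-poly} by introducing $f(\vx)=\tfrac12\|\vA\vx-\vb\|^2$ and its solution set $X_*$ over $X$ (which equals your $S$ under the nonemptiness hypothesis), invoking the Luo--Tseng-type error bound of \cite[Theorem 18]{wang2014iteration} to get $\|\vx-\Proj_{X_*}(\vx)\|\le\kappa\,\|\vx-\Proj_X(\vx-\nabla f(\vx))\|$, then showing by a normal-cone/projection computation that the prox-gradient residual is dominated by $\dist\left(\vzero,\nabla f(\vx)+\cN_X(\vx)\right)$, and finally using $\|\vA\vx-\vb\|\le\|\vA\|\,\|\vx-\Proj_{X_*}(\vx)\|$ to conclude with $v=\frac{1}{\kappa\|\vA\|}$. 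You instead combine a direct variational-inequality estimate, $\|\vg\|\,\dist(\vx,S)\ge\langle\vg,\vx-\vx^*\rangle\ge\|\vA\vx-\vb\|^2$ for any $\vg\in\vA^\top(\vA\vx-\vb)+\cN_X(\vx)$, with the classical Hoffman bound $\dist(\vx,S)\le\tau\|\vA\vx-\vb\|$ for $\vx\in X$. Both proofs ultimately rest on polyhedrality through an error bound relating $\dist(\vx,S)$ to the residual $\|\vA\vx-\vb\|$; the paper outsources this to a specialized projected-gradient error bound, while you use only Hoffman's lemma, which makes your proof more elementary and self-contained. Your closing caveat --- that Hoffman must be applied to the full system $\{\vD\vz\le\vd,\ \vA\vz=\vb\}$ rather than to the affine subspace alone --- is exactly the right point; it is the same place where the paper needs the polyhedral structure of $X$ (and it is why Claim~\ref{clm:ball} requires a separate argument). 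The two proofs yield different but equally valid constants, $v=\tau^{-1}$ versus $v=\frac{1}{\kappa\|\vA\|}$, and your handling of the trivial case $\vA\vx=\vb$ and of the reduction to arbitrary $\vg$ in the set is sound.
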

By this claim, we let $X=\{\vx\in\RR^n: l_i\le x_i \le u_i, \forall\, i\in [n]\}$ and immediately have that the LCQP problem \eqref{eq:ncQP} satisfies the regularity condition in Assumption~\ref{assumption:regularity}.

\subsubsection{ball constraint}
Let $X=\{\vx\in\RR^n: \|\vx\| \le r\}$ be a ball of radius $r>0$ and $h$ be the indicator function on $X$. Then we have the following result.
\begin{claim}\label{clm:ball}
	Suppose $\vA$ has full row-rank. In addition, there exists a $\hat\vx$ in the interior of $X$ such that $\vA\hat\vx = \vb$. Then there is a constant $v>0$ such that \eqref{eq:reg-cond-poly} holds.
\end{claim}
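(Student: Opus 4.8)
The plan is to lower-bound $\dist\!\big(\vzero, \vA^\top(\vA\vx-\vb)+\cN_X(\vx)\big)$ by a fixed multiple of $\|\vA\vx-\vb\|$ \emph{directly}, exploiting the interior point $\hat\vx$; this is cleaner than running the projected-gradient error bound used for Claim~\ref{clm:poly}. Write $\vr:=\vA\vx-\vb$, so that $\vA^\top\vr=\nabla f(\vx)$ for $f(\vx):=\tfrac12\|\vA\vx-\vb\|^2$. Two elementary facts are used throughout: (i) full row rank of $\vA$ gives $\|\vA^\top\vr\|^2=\vr^\top\vA\vA^\top\vr\ge \sigma^2\|\vr\|^2$ with $\sigma:=\sqrt{\lambda_{\min}(\vA\vA^\top)}>0$; and (ii) since $\vA\hat\vx=\vb$, the identity $\langle\vA^\top\vr,\hat\vx-\vx\rangle=\langle\vr,\vA\hat\vx-\vA\vx\rangle=-\|\vr\|^2$ holds for every $\vx$. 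Let $\delta:=r-\|\hat\vx\|>0$ be the Slater gap.

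Next I would insert the explicit normal cone: $\cN_X(\vx)=\{\vzero\}$ if $\|\vx\|<r$ and $\cN_X(\vx)=\{t\vx:t\ge0\}$ if $\|\vx\|=r$, so in all cases $\dist(\vzero,\vA^\top\vr+\cN_X(\vx))=\min_{t\ge0}\|\vA^\top\vr+t\vx\|$. If $\langle\vA^\top\vr,\vx\rangle\ge0$ (automatic when $\|\vx\|<r$), the convex parabola $t\mapsto\|\vA^\top\vr+t\vx\|^2$ is nondecreasing on $[0,\infty)$, the minimizer is $t=0$, and the distance equals $\|\vA^\top\vr\|\ge\sigma\|\vr\|$ by (i); this case is finished. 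The only nontrivial case is $\|\vx\|=r$ with $\langle\vA^\top\vr,\vx\rangle<0$: decompose $\vA^\top\vr=-\mu\vx+\ve$ with $\mu=-\langle\vA^\top\vr,\vx\rangle/r^2>0$ and $\ve\perp\vx$; then the minimizing $t$ is $\mu$ and the distance equals $\|\ve\|$.

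The crux is to show $\mu$ is small relative to $\|\ve\|$, which is exactly where $\delta>0$ enters. Substituting the decomposition into identity (ii) and using $\langle\ve,\vx\rangle=0$, $\|\vx\|=r$, and $\langle\vx,\hat\vx\rangle\le r\|\hat\vx\|=r(r-\delta)$ gives
\begin{align*}
-\|\vr\|^2 = \langle -\mu\vx+\ve,\ \hat\vx-\vx\rangle = \mu\big(r^2-\langle\vx,\hat\vx\rangle\big)+\langle\ve,\hat\vx\rangle \ge \mu r\delta - \|\ve\|\,r,
\end{align*}
hence $\mu\le\|\ve\|/\delta$. Combining with $\sigma^2\|\vr\|^2\le\|\vA^\top\vr\|^2=\mu^2r^2+\|\ve\|^2\le\|\ve\|^2(r^2+\delta^2)/\delta^2$ yields $\|\ve\|\ge\frac{\sigma\delta}{\sqrt{r^2+\delta^2}}\|\vr\|$. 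Since $\frac{\sigma\delta}{\sqrt{r^2+\delta^2}}\le\sigma$, the constant $v:=\frac{\sigma\delta}{\sqrt{r^2+\delta^2}}$ works in both cases, establishing \eqref{eq:reg-cond-poly} for all $\vx\in X$, and therefore \eqref{eq:regularity} exactly as in Claim~\ref{clm:poly}. The main obstacle is this last case: if there were no interior feasible point ($\delta=0$), the normal direction $-\mu\vx$ could absorb almost all of $\vA^\top\vr$ and only a quadratic error bound ($\|\vr\|^2\lesssim\dist$) would survive; the Slater gap $\delta$ is precisely what upgrades it to the linear bound required by Assumption~\ref{assumption:regularity}.
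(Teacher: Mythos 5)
Your proof is correct, and it takes a genuinely different route from the paper's. The paper first normalizes ($r=1$, $\vA\vA^\top=\vI$), identifies the squared distance in the active case as the quadratic form $(\vA\vx-\vb)^\top\vA(\vI-\vx\vx^\top)\vA^\top(\vA\vx-\vb)$, and then defines $v_*$ in \eqref{eq:v-min} as the minimum of $\lambda_{\min}\left(\vA(\vI-\vx\vx^\top)\vA^\top\right)$ over a compact set, proving $v_*>0$ by contradiction using the interior point $\hat\vx$. That argument yields existence of $v$ but only a variationally defined, non-explicit constant. You instead decompose $\vA^\top\vr=-\mu\vx+\ve$ along the boundary normal and use the identity $\langle \vA^\top\vr,\hat\vx-\vx\rangle=-\|\vr\|^2$ to bound the normal component by $\mu\le\|\ve\|/\delta$, which delivers the explicit constant $v=\sigma\delta/\sqrt{r^2+\delta^2}$ with $\sigma=\sqrt{\lambda_{\min}(\vA\vA^\top)}$ and $\delta=r-\|\hat\vx\|$; this is quantitatively sharper (the constant degrades transparently as the Slater gap $\delta\to0$), avoids the normalization and the compactness/contradiction step, and generalizes more readily. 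One small inaccuracy to fix in the write-up: $\langle\vA^\top\vr,\vx\rangle\ge0$ is \emph{not} automatic when $\|\vx\|<r$, and the identity $\dist\left(\vzero,\vA^\top\vr+\cN_X(\vx)\right)=\min_{t\ge0}\|\vA^\top\vr+t\vx\|$ fails for interior points (where $\cN_X(\vx)=\{\vzero\}$ forces $t=0$ and the distance is exactly $\|\vA^\top\vr\|\ge\sigma\|\vr\|$). This does not damage the argument — interior points are handled trivially and the boundary case split is exhaustive — but the interior case should be stated as its own (immediate) case rather than folded into the sign condition.
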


\subsection{Main Theorems}
We give the main convergence results in this subsection. Detailed proofs are provided in the supplementary materials.
\begin{theorem}[total complexity of iALM]\label{thm:ialm-compl}
Suppose that all conditions in Assumptions~\ref{assump:smooth-wc} through \ref{assumption:regularity} hold. Given $\vareps>0$, then Algorithm~\ref{alg:ialm} with $\gamma_k$ given in \eqref{eq:gamma-k} needs $\tilde{O}(\vareps^{-3})$ $\mathrm{APG}$ iterations to produce an $\vareps$-KKT solution of \eqref{eq:ncp_eq}. In addition, if $\vc(\vx)=\vA\vx-\vb$, then $\tilde{O}(\vareps^{-\frac{5}{2}})$ $\mathrm{APG}$ iterations are needed to produce an $\vareps$-KKT solution of \eqref{eq:ncp_eq}. 
\end{theorem}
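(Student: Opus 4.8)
The plan is to chain together the three-layer complexity bounds and the regularity condition, tracking how the penalty parameter $\beta_k$ grows and how it enters the per-subproblem work. First I would establish a uniform bound on the multipliers $\{\vy^k\}$: because the dual step size $w_k$ is capped by $w_0 \gamma_k / \|\vc(\vx^{k+1})\|$, we have $\|\vy^{k+1}-\vy^k\| \le w_0\gamma_k$, and with $\gamma_k$ given by \eqref{eq:gamma-k} the series $\sum_k \gamma_k$ converges (it is $\sum_k \frac{(\log 2)^2\|\vc(\vx^0)\|}{(k+1)[\log(k+2)]^2}$, a convergent Bertrand-type series), so $\|\vy^k\| \le y_{\max}$ for all $k$ with an explicit constant. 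This is crucial because it means $\hat\rho_k = \rho_0 + \bar L\|\vy^k\| + \beta_k\rho_c = O(\beta_k)$ and $\hat L_k = O(\beta_k)$, i.e. the smoothness and weak-convexity constants of the $k$-th subproblem are both $\Theta(\beta_k)$.

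Second, I would bound $\Phi(\vx^k) - \Phi^*$ uniformly over $k$, where $\Phi = \cL_{\beta_k}(\cdot,\vy^k)$ is the $k$-th AL subproblem. By Assumption~\ref{assump:composite} the domain of $h$ has diameter $D$, and by \eqref{P3-eqn-33} the quantities $|f_0|$, $\|\vc\|$ etc.\ are bounded on $\dom(h)$; plugging in gives $\cL_{\beta_k}(\vx,\vy^k) \le B_0 + y_{\max}\bar B_c + \frac{\beta_k}{2}\bar B_c^2$ and a similar lower bound on $\Phi^*$, so $\Phi(\vx^k)-\Phi^* = O(\beta_k)$ as well — but more importantly, the \emph{relevant} initial gap for warm-started iPPM on subproblem $k$ is controlled using $\vx^k$ as the starting point and the descent from the previous outer iteration. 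By Theorem~\ref{thm:ippm-compl} and Remark after it, solving subproblem $k$ to tolerance $\vareps$ costs $O\!\big(\frac{\sqrt{\hat\rho_k \hat L_k}}{\vareps^2}[\Phi(\vx^k)-\Phi^*]\log\frac{D}{\vareps}\big)$ APG iterations; with $\hat\rho_k,\hat L_k = \Theta(\beta_k)$ and the gap $O(\beta_k)$, this is $\tilde O(\beta_k^2/\vareps^2)$ per outer iteration — or, if the gap can be shown to be $O(1)$ by a telescoping argument across outer iterations, $\tilde O(\beta_k/\vareps^2)$.

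Third, I would determine the stopping outer index $K$. The iALM stops when $\|\vc(\vx^{k+1})\| \le \vareps$ and the stationarity residual of $\cL_{\beta_k}$ is $\le\vareps$; the latter is guaranteed by the iPPM tolerance, and for the former I would invoke Assumption~\ref{assumption:regularity}: since $\vx^{k+1}$ is $\vareps$-stationary for $\cL_{\beta_k}(\cdot,\vy^k)$, one has $\dist(-J_c(\vx^{k+1})^\top(\vy^k + \beta_k\vc(\vx^{k+1})), \partial h(\vx^{k+1})) \le \vareps$, which after dividing by $\beta_k$ and using the triangle inequality together with \eqref{eq:regularity} yields $v\|\vc(\vx^{k+1})\| \le \frac{\vareps + \|J_c^\top\vy^k\|}{\beta_k} \le \frac{\vareps + B_c y_{\max}}{\beta_k}$. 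Hence $\|\vc(\vx^{k+1})\| \le \vareps$ once $\beta_k = \Omega(1/\vareps)$, i.e. $K = O(\log(1/\vareps))$ since $\beta_k=\beta_0\sigma^k$ grows geometrically. Finally I would sum the per-iteration costs: $\sum_{k=0}^{K} \tilde O(\beta_k^2/\vareps^2) = \tilde O(\beta_K^2/\vareps^2) = \tilde O(\vareps^{-2}\cdot\vareps^{-2}) = \tilde O(\vareps^{-4})$ in the naive bookkeeping — so to reach $\tilde O(\vareps^{-3})$ I must use the refined $\tilde O(\beta_k/\vareps^2)$ per-iteration bound, giving $\sum_k \tilde O(\beta_k/\vareps^2) = \tilde O(\beta_K/\vareps^2) = \tilde O(\vareps^{-3})$. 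For the affine case $\vc(\vx)=\vA\vx-\vb$, the constraint functions are $0$-weakly convex, so $\rho_c=0$ and $\hat\rho_k = \rho_0 + \bar L\|\vy^k\| = O(1)$ stays bounded while only $\hat L_k = \Theta(\beta_k)$ grows; then $\sqrt{\hat\rho_k\hat L_k} = O(\sqrt{\beta_k})$, and the per-iteration cost becomes $\tilde O(\sqrt{\beta_k}/\vareps^2)$, summing to $\tilde O(\sqrt{\beta_K}/\vareps^2) = \tilde O(\vareps^{-1/2}\vareps^{-2}) = \tilde O(\vareps^{-5/2})$.

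The main obstacle is the second step: showing that the iPPM initial gap $\Phi(\vx^k)-\Phi^*$ on subproblem $k$ is $O(1)$ rather than $O(\beta_k)$ — that is, that warm-starting with the previous outer iterate $\vx^k$ keeps the objective gap uniformly bounded even as $\beta_k\to\infty$. This requires carefully relating $\cL_{\beta_{k}}(\vx^k,\vy^k)$ to $\cL_{\beta_{k-1}}(\vx^k,\vy^{k-1})$, absorbing the change $\beta_k - \beta_{k-1}$ and $\vy^k-\vy^{k-1}$ into terms controlled by $\|\vc(\vx^k)\|$ (which is already small by the regularity argument) and the diameter $D$, and then using that $\vx^k$ was produced as a near-stationary point of the previous subproblem. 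Getting this telescoping/potential-function argument to close with the right dependence on $\beta_k$ is what separates the $\tilde O(\vareps^{-3})$ result from the weaker $\tilde O(\vareps^{-4})$ bound of \citep{sahin2019inexact}, and it is where the bounded-domain Assumption~\ref{assump:composite} and the specific choice of $\gamma_k$ do the essential work.
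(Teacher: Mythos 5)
Your proposal follows the same route as the paper's proof: a uniform bound $y_{\max}$ on $\{\vy^k\}$ from the summable $\gamma_k$, the regularity condition turning $\vareps$-stationarity of $\cL_{\beta_{k-1}}(\cdot,\vy^{k-1})$ at $\vx^k$ into $\Vert\vc(\vx^k)\Vert\le\frac{1}{v\beta_{k-1}}(\vareps+B_0+B_c y_{\max})$ and hence $K=O(\log\vareps^{-1})$, and the product bookkeeping $\sum_k T_k^{\mathrm{PPM}}T_k^{\mathrm{APG}}$ with $\hat\rho_k,\hat L_k=O(\beta_k)$ in the nonconvex case and $\hat\rho_k=O(1)$, $\hat L_k=O(\beta_k)$ in the affine case. (Minor slip: in your feasibility derivation you drop the $\Vert\nabla g(\vx^k)\Vert$ contribution when moving terms across the triangle inequality; the correct numerator is $\vareps+B_0+B_c y_{\max}$, which does not change the order.)

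The one step you leave open --- the uniform $O(1)$ bound on the warm-start gap $\cL_{\beta_k}(\vx^k,\vy^k)-\min_\vx\cL_{\beta_k}(\vx,\vy^k)$ --- is indeed the decisive step, but it does not require the telescoping/potential-function argument you anticipate, nor any comparison of $\cL_{\beta_k}$ with $\cL_{\beta_{k-1}}$. It is a direct substitution of the feasibility bound you already have: since $\Vert\vc(\vx^k)\Vert\le\frac{\vareps+B_0+B_cy_{\max}}{v\beta_{k-1}}$ and $\beta_k=\sigma\beta_{k-1}$, the penalty term satisfies $\frac{\beta_k}{2}\Vert\vc(\vx^k)\Vert^2\le\frac{\sigma}{2\beta_{k-1}}\bigl(\frac{\vareps+B_0+B_cy_{\max}}{v}\bigr)^2=O(\sigma^{1-k})$ and the linear term satisfies $|\langle\vy^k,\vc(\vx^k)\rangle|\le y_{\max}\Vert\vc(\vx^k)\Vert=O(\sigma^{1-k})$, so $\cL_{\beta_k}(\vx^k,\vy^k)\le B_0+O(1)$ for all $k\ge 1$ (the case $k=0$ is handled separately since $\beta_0\Vert\vc(\vx^0)\Vert^2$ is a fixed constant). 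The matching lower bound $\cL_{\beta_k}(\vx,\vy^k)\ge f_0(\vx)+\langle\vy^k,\vc(\vx)\rangle\ge -B_0-y_{\max}\bar{B}_c$ is immediate because the quadratic penalty is nonnegative. With that, your refined per-subproblem costs $\tilde O(\beta_k/\vareps^2)$ (nonconvex) and $\tilde O(\sqrt{\beta_k}/\vareps^2)$ (affine) hold, and the geometric summations close exactly as you describe.
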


\begin{remark}
For nonconvex-constrained cases, our complexity result $\tilde{O}(\vareps^{-3})$ is better than the result  $\tilde{O}(\vareps^{-4})$ obtained in \citep{sahin2019inexact} for an first-order iALM, and it matches the complexity result of a penalty-based FOM by \cite{lin2019inexact-pp}. For the affine equality-constrained case with a composite objective, our result $\tilde{O}(\vareps^{-\frac{5}{2}})$ is better than $\tilde{O}(\vareps^{-3})$ obtained by \cite{li2020augmented} for a first-order iALM  and matches the result of the penalty-based FOM by \cite{lin2019inexact-pp}. Numerically, the iALM-based FOM usually significantly outperforms a penalty-based FOM, as shown in our experiments.
\end{remark}

In Theorem~\ref{thm:ialm-compl}, we required the dual step size $\textstyle w_k = w_0 \min\left\{1, \frac{\gamma_k }{\Vert \vc(\vx^{k+1}) \Vert}\right\}$, where as in \eqref{eq:gamma-k}, 
\begin{equation*}
\gamma_k = \frac{(\log 2)^2 \Vert \vc(\vx^0) \Vert}{(k+1)[\log(k+2)]^2}.
\end{equation*}
Numerically, we observed better performance by slightly deviating from this setting. For example, we set $w_k = \frac{1}{\Vert \vc(\vx^{k+1}) \Vert}$ in all of our trials. This motivates us to give a more general version of Theorem~\ref{thm:ialm-compl}. The following theorem considers $w_k = \frac{O(k^q)}{\vc(\vx^{k+1})}$ and sacrifices an order of %$(\log \vareps^{-1})^{2n+2}$ 
$(\log \vareps^{-1})^{q+1}$ in the total complexity compared to Theorem~\ref{thm:ialm-compl}. 

\begin{theorem}[complexity of iALM with general dual step sizes]\label{thm:ialm-cplx-2}
	In Algorithm~\ref{alg:ialm}, for some fixed $q \in \ZZ_+ \cup \{0\}$ and $M > 0$, let 
	\begin{equation}\label{wk-general}
	w_k = \frac{M(k+1)^q}{\Vert \vc(\vx^{k+1}) \Vert}, \forall k \ge 0.
	\end{equation}
	Assume all other conditions of Theorem \ref{thm:ialm-compl} hold.
	Then given $\vareps>0$, Algorithm~\ref{alg:ialm} with $w_k$ given in \eqref{wk-general} needs $\tilde{O}(\vareps^{-3})$ $\mathrm{APG}$ iterations to produce an $\vareps$-KKT solution of \eqref{eq:ncp_eq}. In addition, if $\vc(\vx)=\vA\vx-\vb$, then $\tilde{O}(\vareps^{-\frac{5}{2}})$ $\mathrm{APG}$ iterations are needed to produce an $\vareps$-KKT solution of \eqref{eq:ncp_eq}. 
\end{theorem}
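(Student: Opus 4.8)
\textbf{Proof proposal for Theorem~\ref{thm:ialm-cplx-2}.}

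The plan is to follow the same three-part strategy as in the proof of Theorem~\ref{thm:ialm-compl} (boundedness of the dual iterates, near-feasibility via Assumption~\ref{assumption:regularity}, and a uniform bound on $\cL_{\beta_k}(\vx^k,\vy^k)$), tracking carefully how the choice $w_k = \frac{M(k+1)^q}{\|\vc(\vx^{k+1})\|}$ affects each quantity. First I would observe that by this choice, $w_k\|\vc(\vx^{k+1})\| = M(k+1)^q$ exactly, so that
\begin{equation*}
\|\vy^k\| \le \sum_{t=0}^{k-1} w_t\|\vc(\vx^{t+1})\| = M\sum_{t=0}^{k-1}(t+1)^q = O(k^{q+1}).
\end{equation*}
This is the crucial difference from Theorem~\ref{thm:ialm-compl}: the dual iterates are no longer \emph{uniformly} bounded but grow polynomially in the outer iteration count $k$. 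The key point, however, is that the final outer iteration index $K$ will still be only $\tilde{O}(1)$ (logarithmic in $\vareps^{-1}$), because $\beta_k = \beta_0\sigma^k$ grows geometrically and dominates the polynomial growth of $\|\vy^k\|$ in the feasibility estimate. Concretely, reusing the $\vareps$-stationarity bound $\dist(\vzero,\partial_x\cL_{\beta_k}(\vx^{k+1},\vy^k))\le\vareps$ from the iPPM analysis together with Assumption~\ref{assumption:regularity}, I would get, for $k\ge1$,
\begin{equation*}
\|\vc(\vx^k)\| \le \frac{1}{v\beta_{k-1}}\big(\vareps + B_0 + B_c\|\vy^{k-1}\|\big) = \frac{O(k^{q+1})}{\beta_0\sigma^{k-1}},
\end{equation*}
which forces $\|\vc(\vx^K)\|\le\vareps$ once $\sigma^{K}/K^{q+1} = \Omega(\vareps^{-1})$, i.e. $K = \log_\sigma(\vareps^{-1}) + O((q+1)\log\log\vareps^{-1}) = \tilde{O}(1)$; the $(q+1)$ exponent on the $\log\log$ term is exactly where the claimed extra $(\log\vareps^{-1})^{q+1}$ factor enters.

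Next I would establish the uniform bound on the AL values along the sequence, the analogue of \eqref{eq:L-ub}. Here one has to be more careful than in Theorem~\ref{thm:ialm-compl}, since the previous argument used $\|\vy^k\|\le y_{\max}$ uniformly. Instead, for each $k \le K-1$ I would bound $\cL_{\beta_k}(\vx^{k+1},\vy^{k+1})$ in terms of $\cL_{\beta_k}(\vx^k,\vy^k)$ using the descent of iPPM on the shifted AL function plus a correction term coming from the dual update and the change of penalty parameter; summing (a finite, $\tilde{O}(1)$-length sum) yields $\cL_{\beta_k}(\vx^k,\vy^k) = \tilde{O}(1)$ for all $k\le K-1$, where the hidden constants depend polynomially on $\|\vy^k\| = O(K^{q+1}) = \tilde{O}(1)$ and on $\beta_k\|\vc(\vx^{k+1})\|^2 = O(\beta_0\sigma^k \cdot K^{2(q+1)}/(\beta_0\sigma^{k-1})^2) = \tilde{O}(\sigma^{-k})$, which is summable. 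The lower bound $\cL_{\beta_k}(\vx,\vy^k)\ge -B_0 - \|\vy^k\|\bar B_c = \tilde{O}(1)$ is immediate from Assumption~\ref{assump:composite}. Hence $\Phi(\vx^0) - \Phi^* = \tilde{O}(1)$ when Algorithm~\ref{alg:ippm} is invoked at each outer step, uniformly in $k\le K-1$.

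Finally I would assemble the complexity count exactly as in the (elided) proof of Theorem~\ref{thm:ialm-compl}: by Theorem~\ref{thm:ippm-compl}, the number of iPPM iterations at outer step $k$ is $O(\hat\rho_k(\Phi(\vx^k)-\Phi^*)/\vareps^2) = O(\beta_k/\vareps^2) = O(\vareps^{-3})$ in the nonconvex-constraint case (since $\beta_K = O(\vareps^{-1})$), and $O(\vareps^{-2})$ in the affine case (where $\rho_c = 0$, so $\hat\rho_k = O(1)$); by Lemma~\ref{lem:total-iter-apg} each iPPM subproblem costs $\sqrt{\hat L_k/\hat\rho_k}\,\log(\cdots) = \tilde{O}(1)$ APG iterations in the nonconvex case and $\tilde{O}(\sqrt{\beta_k}) = \tilde{O}(\vareps^{-1/2})$ in the affine case; multiplying and summing over the $K = \tilde{O}(1)$ outer iterations gives $\tilde{O}(\vareps^{-3})$ and $\tilde{O}(\vareps^{-5/2})$ respectively, with the only change from Theorem~\ref{thm:ialm-compl} being the additional $(\log\vareps^{-1})^{q+1}$ factor absorbed into the $\tilde{O}$. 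The main obstacle I anticipate is the second step — propagating the uniform bound on the AL values when $\|\vy^k\|$ is allowed to grow — since one must verify that every place where the old proof used $y_{\max}$ can tolerate a $\tilde{O}(1)$ (rather than truly constant) bound, and in particular that the telescoped correction terms from the dual updates remain summable; the geometric growth of $\beta_k$ against the merely polynomial growth of $\|\vy^k\|$ is what makes this go through, but it requires bookkeeping care.
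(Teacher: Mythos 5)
Your proposal is correct and follows essentially the same route as the paper's proof: polynomial growth $\|\vy^k\|=O(k^{q+1})$, the observation that $K=\tilde O(1)$ because the geometric growth of $\beta_k$ dominates, and the resulting $(\log\vareps^{-1})^{q+1}$ inflation absorbed into $\tilde O(\cdot)$. The only (immaterial) difference is in bounding the AL values: the paper does not telescope a descent recursion but simply evaluates $\cL_{\beta_k}(\vx^k,\vy^k)\le B_0+\|\vy^k\|\,\|\vc(\vx^k)\|+\frac{\beta_k}{2}\|\vc(\vx^k)\|^2$ and plugs in the feasibility bound, which yields exactly the $\tilde O(\sigma^{-k})$ correction terms you identified, giving $\cL_{\beta_k}(\vx^k,\vy^k)-\min_\vx\cL_{\beta_k}(\vx,\vy^k)=O(y_k(1+y_k/\beta_k))$ directly.
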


The proof of the above theorem is very similar to the proof of Theorem \ref{thm:ialm-compl}, except we have a nonuniform bound on the dual variable.

\begin{remark}[inequality constraints]\label{rm:ineq-case}
Although only equality constraints are considered in \eqref{eq:ncp_eq}, our complexity result does not lose generality due to the boundedness of $\{\vy^k\}$. Suppose we solve a problem with both equality and inequality constraints
\begin{equation}\label{eq:orig}
\Min_\vx f_0(\vx), \st \vc(\vx)=\vzero, \vd(\vx)\le \vzero.    
\end{equation} 
Introducing a slack variable $\vs\ge \vzero$, we can have an equivalent formulation 
\begin{equation}\label{eq:reform}
\Min_{\vx, \vs\ge\vzero} f_0(\vx), \st \vc(\vx)=\vzero, ~\vd(\vx) + \vs = \vzero.    
\end{equation}
Suppose the conditions required by Theorem~\ref{thm:ialm-compl} hold. Then we can apply Algorithm~\ref{alg:ialm} to \eqref{eq:reform} and obtain an $\vareps$-KKT point $(\bar\vx,\bar\vs)$ with a corresponding multiplier $(\bar\vy,\bar\vz)$, i.e.,
\begin{subequations} \small
\begin{align}
\dist\left(\vzero, \left[\begin{array}{c}\partial f_0(\bar\vx)\\\cN_+(\bar\vs)\end{array}\right]+\left[\begin{array}{c}J_c(\bar\vx)^\top\\ \vzero\end{array}\right]\bar\vy+\left[\begin{array}{c}J_d(\bar\vx)^\top\\\vI\end{array}\right]\bar\vz\right) \le \vareps, \label{eq:kkt-ineq-eq-d}\\ 
\|\vc(\bar\vx)\|^2 + \|\vd(\bar\vx)+\bar\vs\|^2 \le \vareps^2, \ \bar\vs\ge \vzero, \label{eq:kkt-ineq-eq-p}
\end{align}
\end{subequations} \normalsize
where $\cN_+(\vs)$ denotes the normal cone of the nonnegative orthant at $\vs$.

By \eqref{eq:kkt-ineq-eq-d} and the definition of the normal cone, we have $\|[\bar\vz]_-\|\le \vareps$. Let $\hat\vz = \bar\vz-[\bar\vz]_-$. Then $\hat\vz\ge\vzero$, and if $\|J_d(\cdot)\|$ is uniformly bounded, then it follows from \eqref{eq:kkt-ineq-eq-d} that
\begin{equation}\label{eq:reform-d}
\dist\left(\vzero, \partial f_0(\bar\vx) J_c(\bar\vx)^\top \bar\vy + J_d(\bar\vx)^\top\hat\vz\right) = O(\vareps).  
\end{equation}
In addition, from \eqref{eq:kkt-ineq-eq-p}, it is straightforward to have $\|\vc(\bar\vx)\|^2+\|[\vd(\bar\vx)]_+\|^2\le \vareps^2$. Furthermore, notice that if some $\bar s_i =0$, then $|d_i(\bar\vx)|\le \vareps$ from \eqref{eq:kkt-ineq-eq-p}, and if $\bar s_i >0$, then $|\bar z_i| \le \vareps$ from \eqref{eq:kkt-ineq-eq-d}. Finally, use the boundedness of $\vd$ and the fact that $\|\bar\vz\| = O(1)$ is independent of $\vareps$ from the proof of Theorem~\ref{thm:ialm-compl} to have $|\hat\vz^\top \vd(\bar\vx)| = O(\vareps)$. Therefore, $\bar\vx$ is an $O(\vareps)$-KKT point of the original problem \eqref{eq:orig}, in terms of primal feasibility, dual feasibility, and the complementarity condition. 
\end{remark}

%\color{blue}
We have established an $\tilde{O}(\vareps^{-\frac{5}{2}})$ complexity for problems with affine-equality constraints. In Appendix \ref{sec:cvx_ineq}, we extend the same-order complexity result to problems with general convex constraints, namely, with affine-equality constraints and also convex inequality constraints.
%\color{black}

\section{NUMERICAL RESULTS} \label{sec:experiment}
%\ZL {I commented out many lines in this section to fit the 8-page limit. Feel free to uncomment.}

In this section, we conduct experiments to demonstrate the empirical performance of the proposed improved iALM. We consider the nonconvex linearly-constrained quadratic program (LCQP), generalized eigenvalue problem (EV), and clustering problem.
\iffalse % May not add more in appendix
More examples are given in the supplementary materials.
\fi
% The tests were conducted on the nonconvex linearly constrained quadratic program (LCQP) and basis pursuit (BP). 
We compare our method to the iALM by \cite{sahin2019inexact} for all three problems, and also to HiAPeM by \cite{li2020augmented} for the LCQP problem. %We call the second method above as iALM2 to distinguish from the iALM in our paper. 
%iALM2 mainly differs from our iALM in the choice of subsolver. It uses APGM in \cite{ghadimi2016accelerated} to get first order stationarity. HiAPeM is designed for problems with nonconvex objective and convex constraints. It's an AL-based FOM that follows the framework of proximal point (PP) method and mixes the usage of iALM and quadratic penalty method to solve PP subproblems.
All the tests were performed in MATLAB 2019b on a Macbook Pro with 4 cores and 16GB memory. %In this section we use figures to present numerical results. 
Due to the page limitation, we put some tables with more details in the supplementary materials.

\subsection{Nonconvex linearly-constrained quadratic programs (LCQP)} \label{sec:experiment-ncvx}

In this subsection, we test the proposed method on solving nonconvex LCQP:
\begin{equation}\label{eq:ncQP} %\small
\begin{aligned}
&\textstyle \min_{\vx \in \mathbb{R}^n}  \frac{1}{2} \vx^\top \vQ \vx + \vc^\top  \vx,\\ 
&\text{s.t. } \vA\vx=\vb,\ x_i \in [l_i,u_i],\,\forall\, i\in [n],
\end{aligned}
\end{equation} \normalsize
where $\vA \in \mathbb{R}^{m\times n}$, and $\vQ\in\RR^{n\times n}$ is symmetric and indefinite (thus the objective is nonconvex). In the test, we generated all data randomly. The smallest eigenvalue of $\vQ$ is $-\rho < 0$, and thus the problem is $\rho$-weakly convex. For all tested instances, we set $l_i=-5$ and $u_i=5$ for each $i\in [n]$.

We generated two groups of LCQP instances of different sizes. The first group had $m = 10$ and $n = 200$ and the second one $m = 100$ and $n = 1000$.
In each group, we generated $10$ instances of LCQP with $\rho = 1$. 
We compared the improved iALM in Algorithm~\ref{alg:ialm} to the iALM by \cite{sahin2019inexact} and the HiAPeM method by \cite{li2020augmented}. HiAPeM adopted a hybrid setting ($N_0 = 10, N_1 = 2$) and a pure-penalty setting ($N_0 = 1, N_1 = 10^6$), where $N_0$ is the number of initial iALM calls, and afterwards, $N_1$ is roughly the number of penalty method calls before each iALM call). The AL function $\cL_{\beta}(\cdot,\vy)$ of LCQP is $\Vert \vQ+\beta \vA^{\top}\vA \Vert$-smooth and $\rho$-weakly convex. %In both iALM and iALM2, we use the exact same outer loop. Namely,  
We set $\beta_k=\sigma^k\beta_0$ with $\sigma=3$ and $\beta_0=0.01$ for both iALMs. For the subsolver of the iALM by \cite{sahin2019inexact}, we set its step size to $\frac{1}{2 \Vert \vQ+\beta_k \vA^{\top}\vA \Vert}$ for the $k$-th outer iteration, as specified by~\cite{ghadimi2016accelerated}. The %target error 
tolerance was set to $\vareps = 10^{-3}$ for all instances. %Besides the error tolerance, 
In addition, we set the maximum inner iteration to $10^6$ for all methods.

\iffalse % move to appendix
For each method, we report the primal residual, dual residual, running time (in seconds), and the number of gradient evaluation, shortened as \verb|pres|, \verb|dres|, \verb|time|, and \verb|#Grad|, respectively. The results for all trials are shown in Tables \ref{table:qp-small-1} and \ref{table:qp-large-1}.  
From the results, we conclude that on average, to reach an $\vareps$-KKT point to the LCQP problem, the proposed improved iALM needs significantly fewer gradient evaluations and takes far less time than all other compared methods. 
\fi

%In Figure~\ref{fig:qp_plots}, 
In the top row of Figure~\ref{fig:all_plot},
we compare the primal residual trajectories of our method and the iALM by \cite{sahin2019inexact} on one representative instance of \eqref{eq:ncQP}. Note the dual residuals of both methods are below error tolerance at the end of each outer loop. 
In Tables \ref{table:qp-small-avg} and \ref{table:qp-large-avg}, we report, for each method, the primal residual, dual residual, running time (in seconds), and the number of gradient evaluation, shortened as \verb|pres|, \verb|dres|, \verb|time|, and \verb|#Grad|, averaged across all ten trials. The complete tables are given in the appendix.

From the results, we conclude that, to reach an $\vareps$-KKT point to the LCQP problem, the proposed improved iALM needs significantly fewer gradient evaluations and takes far less time than all other compared methods.

\setlength{\tabcolsep}{3pt}

\begin{table}\caption{Results by the proposed improved iALM, the iALM by \cite{sahin2019inexact}, and the HiAPeM by \cite{li2020augmented} on solving a $1$-weakly convex LCQP \eqref{eq:ncQP} of size $m=10$ and $n=200$. }\label{table:qp-small-avg} 
\begin{center}
\resizebox{0.48 \textwidth}{!}{ % < 1.
\begin{tabular}{|c||cccc|cccc|cccc|cccc|} % update hiapem!
\hline
method & pres & dres & time & \#Grad %& pres & dres & time & \#Grad & pres & dres & time & \#Grad & pres & dres & time & \#Grad 
\\\hline\hline 
%&\multicolumn{4}{|c|}{proposed improved iALM} & \multicolumn{4}{|c|}{iALM by \cite{sahin2019inexact}} &\multicolumn{4}{|c|}{HiAPeM with $N_0 = 10, N_1 = 2$} &\multicolumn{4}{|c|}{HiAPeM with $N_0 = 1, N_1 = 10^6$} \\\hline
proposed improved iALM & 4.10e-4 & 5.49e-4 & 1.44 & 34294 \\ iALM in \citep{sahin2019inexact} & 5.26e-4 & 1.00e-3 & 11.03 & 1235210 \\
HiAPeM with $N_0 = 10, N_1 = 2$ & 1.97e-4 & 7.53e-4 & 2.71 & 172395 \\ HiAPeM with $N_0 = 1, N_1 = 10^6$ & 3.52e-4 & 8.20e-4 & 6.08 & 493948 \\\hline
\end{tabular}
}
\end{center}
\end{table}

\begin{table}\caption{Results by the proposed improved iALM, the iALM by \cite{sahin2019inexact}, and the HiAPeM by \cite{li2020augmented} on solving a 1-weakly convex LCQP \eqref{eq:ncQP} of size $m=100$ and $n=1000$.}\label{table:qp-large-avg} 
\begin{center}
\resizebox{0.48 \textwidth}{!}{
\begin{tabular}{|c||cccc|cccc|cccc|cccc|} % update hiapem!
\hline
method & pres & dres & time & \#Grad %& pres & dres & time & \#Grad & pres & dres & time & \#Grad & pres & dres & time & \#Grad 
\\\hline\hline 
%&\multicolumn{4}{|c|}{proposed improved iALM} & \multicolumn{4}{|c|}{iALM by \cite{sahin2019inexact}} &\multicolumn{4}{|c|}{HiAPeM with $N_0 = 10, N_1 = 2$} &\multicolumn{4}{|c|}{HiAPeM with $N_0 = 1, N_1 = 10^6$}\\\hline
proposed improved iALM & 5.57e-4 & 8.81e-4 & 135.47 & 278395 \\ iALM in \citep{sahin2019inexact} & 4.45e-4 & 3.37e-3 & 1782.6 & 11186171 \\
HiAPeM with $N_0 = 10, N_1 = 2$ & 3.61e-4 & 8.16e-4 & 585.84 & 3081631 \\ HiAPeM with $N_0 = 1, N_1 = 10^6$ & 5.46e-4 & 9.01e-4 & 991.54 & 5738336 \\\hline
\end{tabular}
}
\end{center}
\end{table}

\subsection{Generalized eigenvalue problem}
In this subsection, we consider the generalized eigenvalue problem (EV) and compare our method to the iALM by \cite{sahin2019inexact}. 

The EV problem is
\begin{equation}\label{eq:EV}
\textstyle \min_{\vx \in \mathbb{R}^n}  \vx^\top \vQ \vx ,\ 
\text{s.t. } \vx^\top \vB \vx - 1 = 0,
\end{equation}
where $\vQ, \vB \in \mathbb{R}^{n\times n}$ are symmetric, and $\vB$ is positive definite. In the test, we set $\vQ = \frac{1}{2}(\hat{\vQ}+\hat{\vQ}^{\top})$ with the entries of $\hat{\vQ}$ independently following from the standard Gaussian $\cN(0,1)$. To ensure $\vB$ to be positive definite, we set $\vB = \bar{\vB} + (\Vert \bar{\vB} \Vert+1) \vI_{n \times n}$, where $\bar{\vB}$ is generated in the same way as $\vQ$. %\ZL{The formulation in \eqref{eq:EV} is taken from \citep{sahin2019inexact}, where 
The regularity condition in Assumption \ref{assumption:regularity} has been shown for \eqref{eq:EV} by \cite{sahin2019inexact}. However, we do not have an explicit compact constraint set, i.e., Assumption~\ref{assump:composite} is violated. Nevertheless, the feasible region of \eqref{eq:EV} is bounded because of the positive definiteness of $\vB$. Hence, the tested methods can still perform well.  %the domain of \eqref{eq:EV} violates our compactness assumption. However such assumption is purely technical and not necessary for practical convergence.

Again, we generated two groups of instances of \eqref{eq:EV}, one with $n = 200$ and the other $n = 1000$. Each group consisted of 10 instances. For \eqref{eq:EV}, we were unable to obtain an explicit formula of the smoothness constant $L_k$ and weak convexity constant $\rho_k$ of the AL function $\cL_{\beta_k}(\cdot,\vy^k)$ for any $k$. The iALM by \cite{sahin2019inexact} used the accelerated first-order method by \cite{ghadimi2016accelerated} as a subroutine. We observed divergence by performing line search to estimate a local smoothness constant. In order to make it converge, we tuned its smoothness constant to $L_k = 2\Vert \vQ \Vert + 1000 + 100 \beta_k$ when $n = 200$, and $L_k = 2\Vert \vQ \Vert + 100000 + 10000  \beta_k$ when $n = 1000$. % Specify more other settings?
The weak convexity constant was tuned to $\rho_k = -0.2 \cdot \min(\text{eig}(\vQ))+\beta_k$.

\iffalse % table not included in main paper.
The results for all instances are shown in Tables \ref{table:ev-small} and \ref{table:ev-large}, where the meanings 
of \verb|pres|, \verb|dres|, \verb|time|, \verb|#Grad|, and \verb|#Obj| are the same as in previous tests in section \ref{sec:experiment}.
From the results, we conclude that on average, to reach an $\vareps$-KKT point to \eqref{eq:EV}, the proposed improved iALM took fewer gradient evaluations and less time than the iALM by \cite{sahin2019inexact}, for all small-sized instances. The advantage of our method is even more significant for all large-sized instances.
\fi

%In Figure~\ref{fig:ev_plots}, 
In the middle row of Figure~\ref{fig:all_plot},
we compare the primal residual trajectories of our method and the iALM by \cite{sahin2019inexact} on one instance of \eqref{eq:EV}. From the results, we conclude that, to reach an $\vareps$-KKT point to the EV problem, the proposed improved iALM takes significantly fewer gradient evaluations (thus shorter time) than the iALM by \cite{sahin2019inexact}, for both small-sized and large-sized instances.

\begin{figure}[t] % combine 6 figures into 1 to save space.
	\begin{center}
		\resizebox{0.5 \textwidth}{!}{
		\begin{tabular}{cc}
			instance of LCQP \eqref{eq:ncQP} & instance of LCQP \eqref{eq:ncQP}\\
			size $m = 20$ and $n = 100$ & size $m = 100$ and $n = 1000$\\[-0.1cm]
	\includegraphics[width=0.35\textwidth]{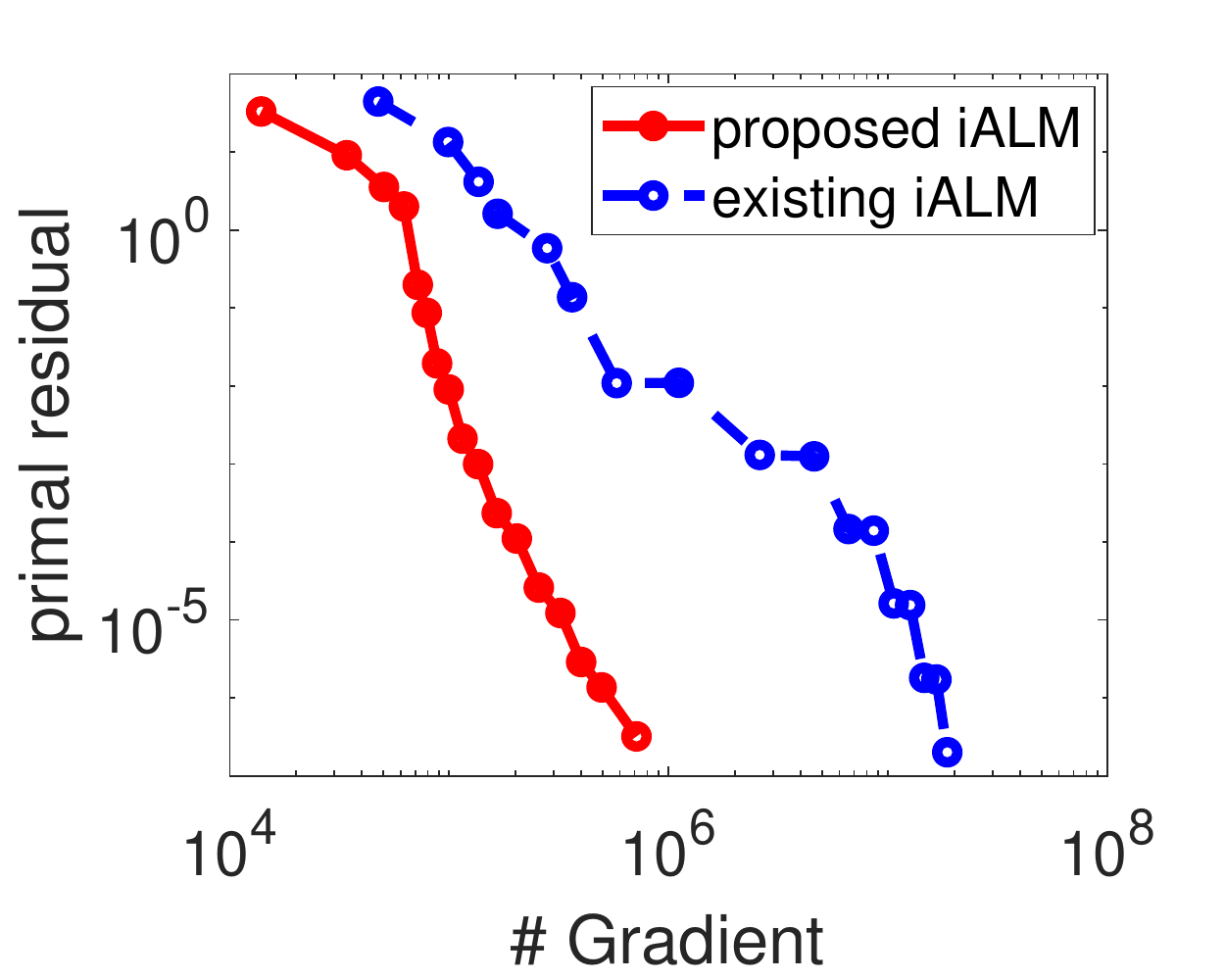} & 
			\includegraphics[width=0.35\textwidth]{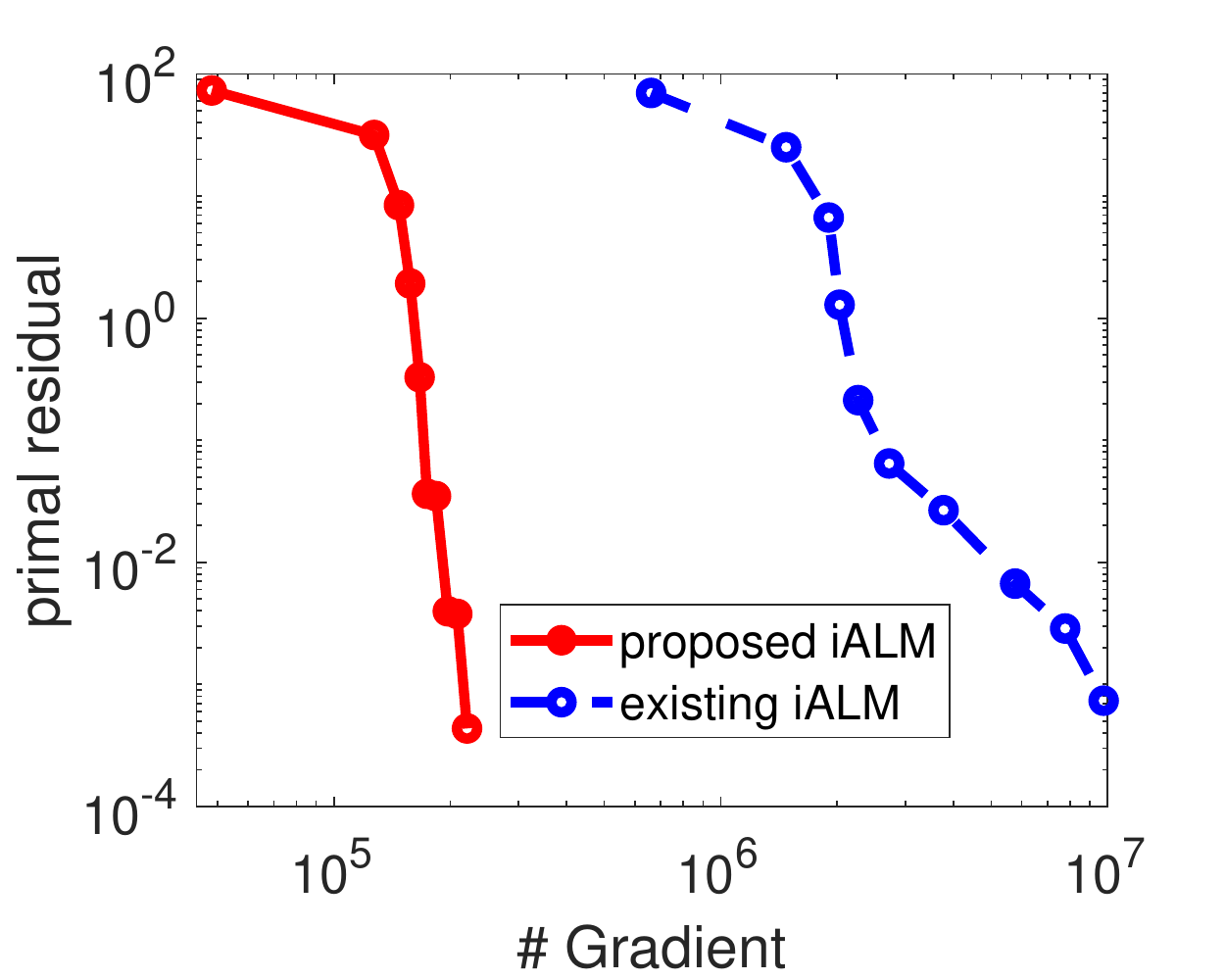} \\[0.3cm]
			instance of EV \eqref{eq:EV} & instance of EV \eqref{eq:EV} \\
			size $n = 200$ & size $n = 1000$\\[-0.1cm]
			\includegraphics[width=0.35\textwidth]{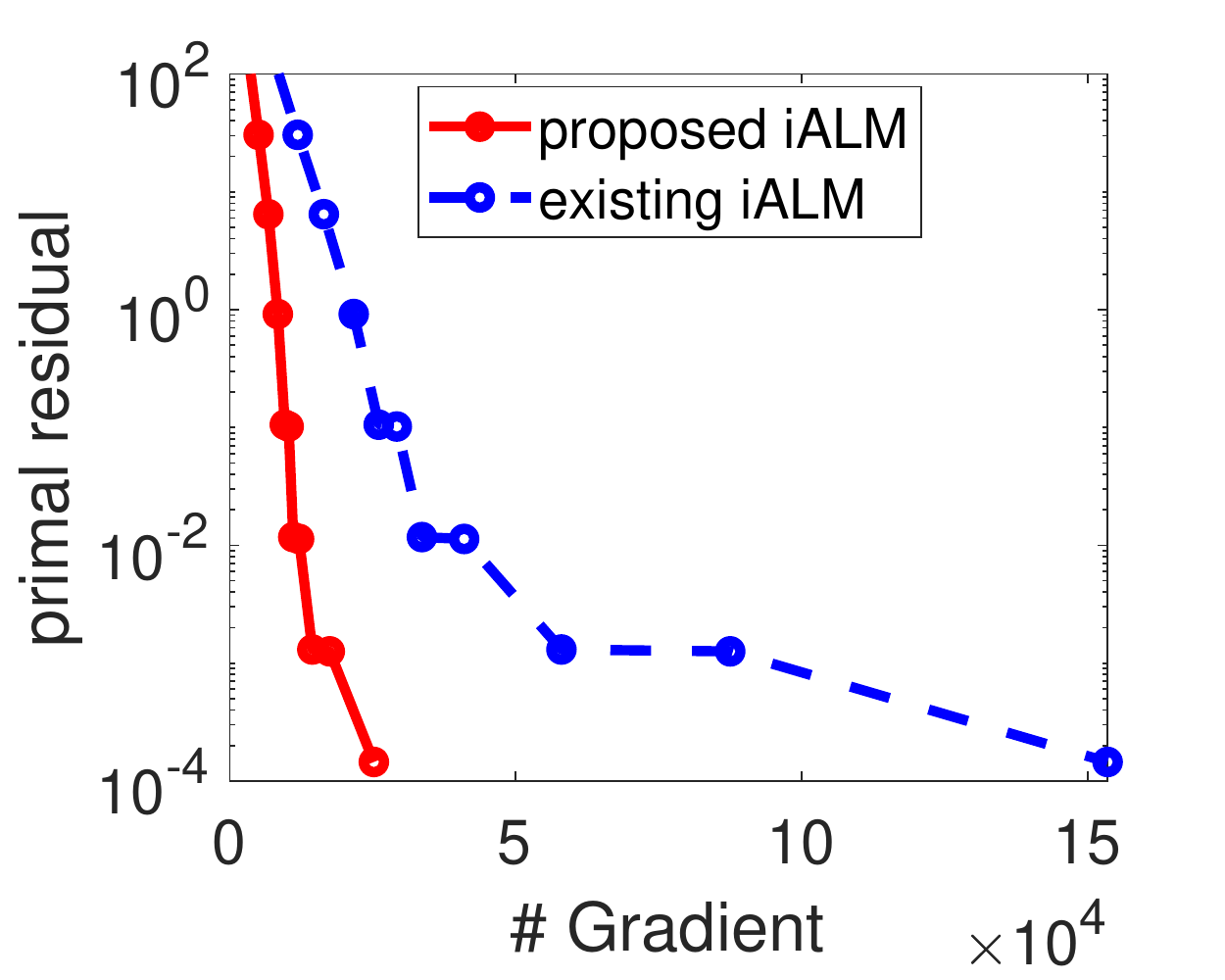} & 
			\includegraphics[width=0.35\textwidth]{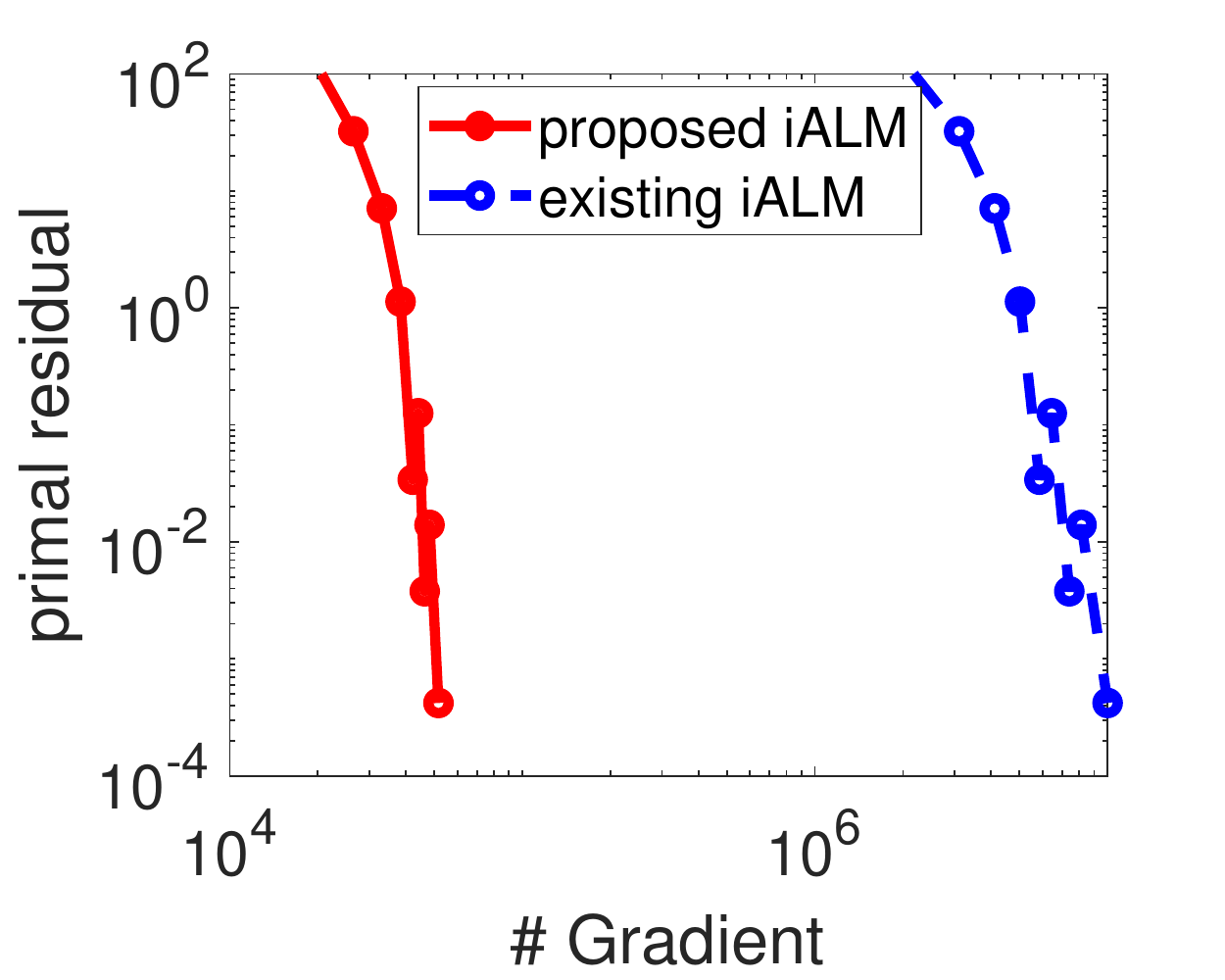}\\[0.3cm]
			clustering \eqref{eq:cluster} with Iris data & clustering \eqref{eq:cluster} with Spambase data\\
            size $(n,s,r) = (150,100,6)$ & size $(n,s,r) = (1000,100,4)$\\[-0.1cm]
			\includegraphics[width=0.35\textwidth]{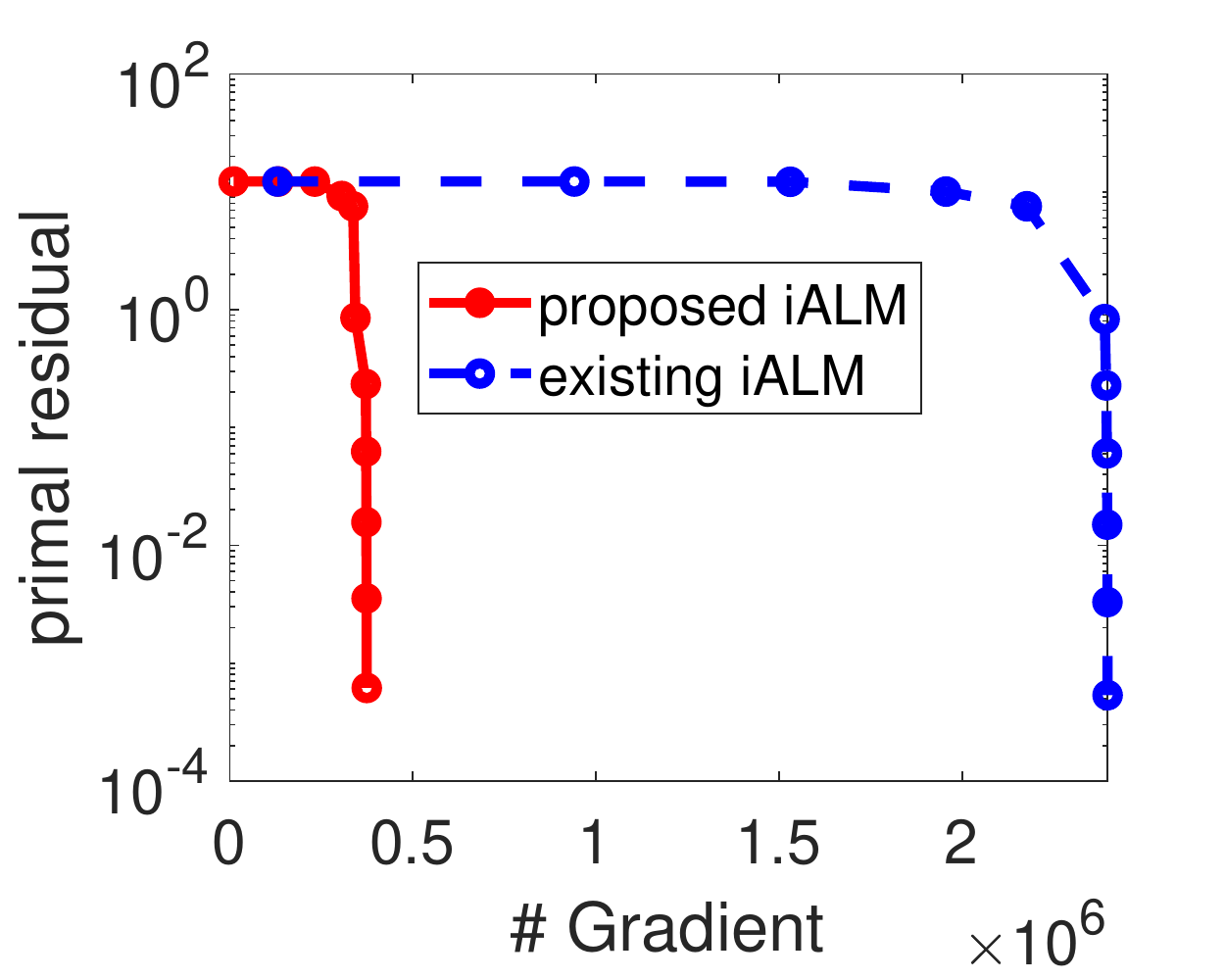} & 
			\includegraphics[width=0.35\textwidth]{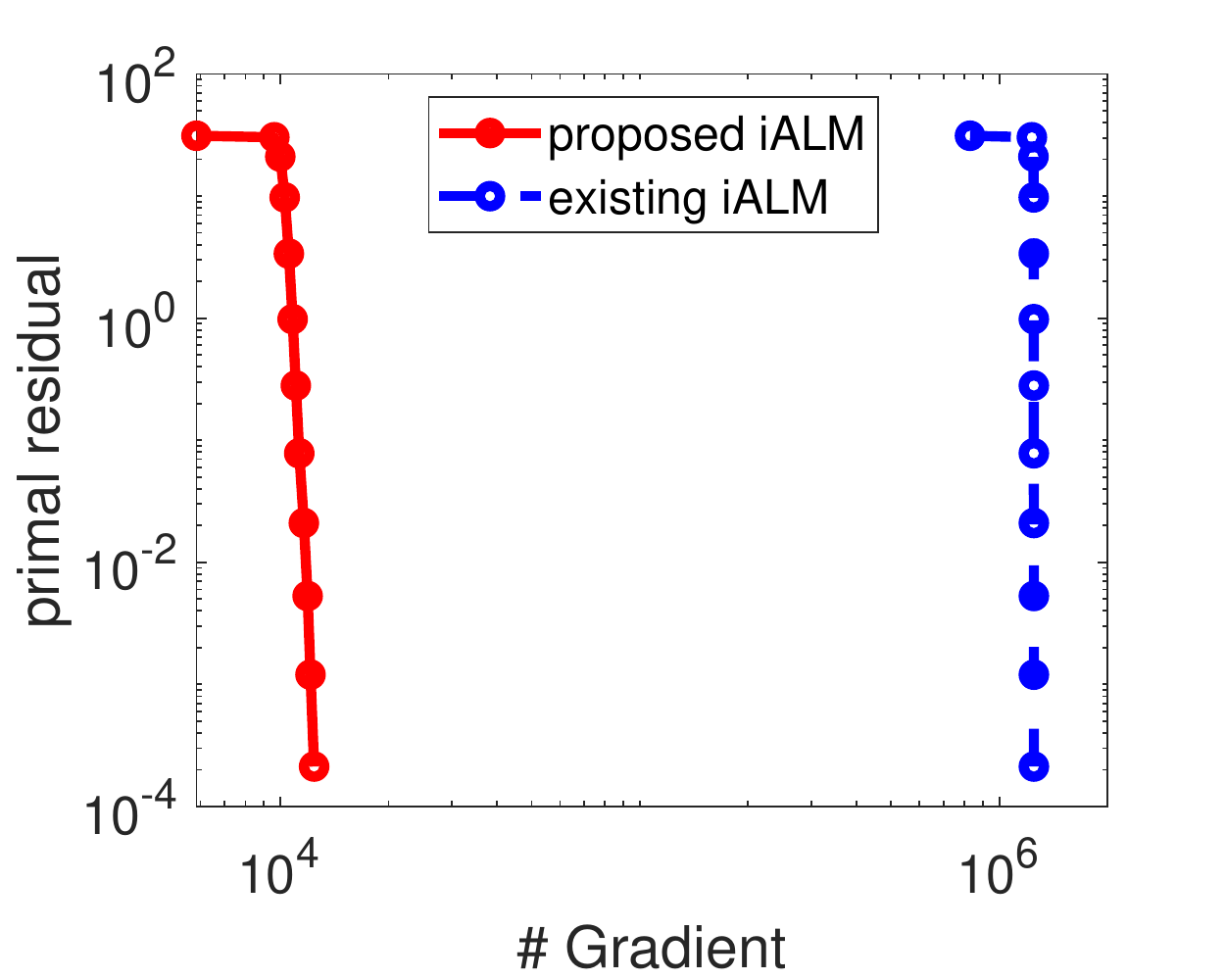} \\
		\end{tabular}
	}
	\end{center}
	\vspace{-0.2cm}
		\caption{Comparison of the proposed iALM and the existing iALM in \citep{sahin2019inexact} on solving the LCQP (first row), EV (middle row), and clustering (last row) problems. Each column shows different problem configurations and dimensions as indicated in the plot title. Each plot shows the primal residual. The markers denote the outer iterations in iALM.
	Dual residuals for both methods are similar, below a given tolerance $\vareps$.}\label{fig:all_plot}
\end{figure}

\subsection{Clustering problem}
In this subsection, we consider the clustering problem proposed in~\citep{sahin2019inexact} and compare our method to the iALM by \cite{sahin2019inexact}. 
The clustering problem is formulated as
\iffalse
\begin{equation}
\textstyle \min_{\vV \in \mathbb{R}^{n \times n}}  tr(\vD \vV \vV^\top) ,\ 
\text{s.t. } \vV \vV^\top \vone = \vone, \Vert \vV \Vert_F^2 \le s, \vV \ge 0 ,
\end{equation}
\fi
\begin{equation}\label{eq:cluster}
\begin{aligned}
&\textstyle \min_{\vX \in C}   \sum_{i,j=1}^n D_{i,j} \langle \vx_i,\vx_j \rangle ,\\ 
&\text{s.t. }  \textstyle\vx_i^\top \sum_{j=1}^{n}\vx_j-1 = 0,\forall\, i=1,\ldots,n ,
\end{aligned}
\end{equation}
where $\vX = [\vx_1^\top, \cdots, \vx_n^\top]^\top$, $C$ is the intersection of the positive orthant with the Euclidean ball of radius $s$, and $\vD$ is the distance matrix generated by a set of data points $\{\vz_i\}_{i=1}^n$ such that $D_{i,j} = \Vert \vz_i-\vz_j \Vert$.

We run two instances, on Iris data set \citep{Dua:2019} with $n = 150, s = 100, r = 6$ and Spambase data set \citep{Dua:2019} with $n = 1000, s = 100, r = 4$. \iffalse For \eqref{eq:EV}, we were unable to obtain an explicit formula of the smoothness constant $L_k$ and weak convexity constant $\rho_k$ of the AL function $\cL_{\beta_k}(\cdot,\vy^k)$ for any $k$. \fi
We tuned the smoothness constant of iALM by \cite{sahin2019inexact} to $L_k = 80 \Vert \vD \Vert + 1200 \beta_k$ in order to have convergence. % Specify more other settings?
The weak convexity constant was tuned to $\rho_k = -0.2r \cdot \min(\text{eig}(\vD)) \cdot \beta_k$.

%In Figure~\ref{fig:cluster_plots}, 
In the bottom row of Figure~\ref{fig:all_plot},
we compare the primal residual trajectories of our method and the iALM by \cite{sahin2019inexact}. %Note the dual residuals of both methods are below error tolerance at the end of each outer loop.
From the results, we conclude that, to reach an $\vareps$-KKT point to the clustering problem, the proposed improved iALM needs significantly fewer gradient evaluations (thus shorter time) than the iALM by \cite{sahin2019inexact}. The advantage of our method is even more significant for the larger-sized instance.

\setlength{\tabcolsep}{3pt}

\section{CONCLUSION}
%\ZL{Please edit}
We have presented an improved iALM for solving nonconvex constrained optimization. Different from existing iALMs, our iALM uses the iPPM to approximately solve each subproblem. Under the same regularity condition as existing works, we explore the better convergence rate of iPPM and the boundedness of AL functions to establish improved complexity results. To reach an $\vareps$-KKT solution, our method requires $\tilde{O}(\vareps^{-\frac{5}{2}})$ proximal gradient steps for solving nonconvex optimization with affine-equality constraints. The result is slightly worsened to $\tilde{O}(\vareps^{-3})$ if the constraints are also nonconvex. %for solving nonconvex problems with nonlinear equality constraints, which can be further reduced to $\tilde{O}(\vareps^{-2.5})$ if the equality constraints are affine. 
Both complexity results are so far the best. Numerically, we demonstrated that the proposed improved iALM could significantly outperform one existing iALM and also one penalty-based FOM.
%though the latter can theoretically achieve a similar complexity result for convex-constrained nonconvex problems. %[In both] \PY{Demonstrated on numerical} experiments, our method significantly outperforms other existing [FOMs] \PY{methods}. %Both results can be generalized to include nonlinear inequality constraints and convex inequality constraints respectively. 

\iffalse % not shown in submission
\section*{Acknowledgement}

This work was supported by the Rensselaer-IBM AI Research Collaboration (http://airc.rpi.edu), part of the IBM AI Horizons Network (http://ibm.biz/AIHorizons).
\fi

\section*{Acknowledgements}

$^*$The authors Pin-Yu Chen, Sijia Liu,  Songtao Lu, and Yangyang Xu are listed in alphabetical order.

This work was supported by the Rensselaer-IBM AI Research Collaboration (\url{http://airc.rpi.edu}), part of the IBM AI Horizons Network (\url{http://ibm.biz/AIHorizons}).

%\newpage

\bibliography{optim}
\bibliographystyle{plainnat}

\newpage

\appendix

%%%%%%%%%%%%%%%%%%%%%%%%%%%%%%%%%%%%%%%%%%

% Now including: proof section, regularity, add exp, numerical tables.

%%%%%%%%%%%%%%%%%%%%%%%%%%%%%%%%%%%%%%%%%%
\onecolumn

\section{PROOFS}
In this section, we provide detailed proofs of our theorems.
\subsection{Proof of Theorem~\ref{thm:ippm-compl}}
	Let $\Phi_k(\vx) := \Phi(\vx)+\rho \Vert \vx-\vx^k \Vert^2$ and $\Phi_k^*=\min_\vx\Phi_k(\vx)$ for each $k\ge0$. 
	Note we have $\dist(\vzero, \partial \Phi_k(\vx^{k+1})) \le \delta=\frac{\vareps}{4}$, and also $\Phi_k$ is $\rho$-strongly convex. Hence $\Phi_k(\vx^{k+1}) - \Phi_k^* \le \frac{\delta^2}{2\rho}$, and $\Phi(\vx^{k+1})+\rho\Vert\vx^{k+1}-\vx^k\Vert^2-\Phi(\vx^k) \le \frac{\delta^2}{2\rho}$. Thus,
	\begin{align}
	&\Phi(\vx^T)-\Phi(\vx^0)+\rho\sum_{k=0}^{T-1}\Vert \vx^{k+1}-\vx^k \Vert^2 \le \frac{T\delta^2}{2\rho} \nonumber\\
	&T\min_{0 \le k \le T-1} \Vert \vx^{k+1}-\vx^k \Vert^2 \le \frac{1}{\rho} \left(\frac{T \delta^2}{2\rho}+[\Phi(\vx^0)-\Phi(\vx^T)]\right) \nonumber\\
	&2\rho \min_{0 \le k \le T-1} \Vert \vx^{k+1}-\vx^k \Vert \le 2\sqrt{\frac{\delta^2}{2}+\frac{\rho [\Phi(\vx^0)-\Phi^*] }{T}}. \label{eq:0}
	\end{align}
	Since $T \ge \frac{32\rho}{\vareps^2}[\Phi(\vx^0)-\Phi^*]$ and $\delta = \frac{\vareps}{4}$, we have
	\begin{equation} \label{eq:2}
	\frac{\rho}{T}[\Phi(\vx^0)-\Phi^*] \le \frac{\vareps^2}{32},
	\end{equation}
	and thus \eqref{eq:0} implies
	\begin{equation}\label{2}
	2\rho \min_{0 \le k \le T-1} \Vert \vx^{k+1}-\vx^k \Vert \le \frac{\vareps}{2}.
	\end{equation}
	Therefore, Algorithm~\ref{alg:ippm} must stop within $T$ iterations, from its stopping condition, and when it stops, the output $\vx^S$ satisfies $2\rho \Vert \vx^{S}-\vx^{S-1} \Vert \le \frac{\vareps}{2}$.
	
	Now recall $\dist(0, \partial \Phi_k(\vx^{k+1})) \le \delta = \frac{\vareps}{4}$, i.e.,
	\begin{equation}\label{1}
	\dist(0, \partial \Phi(\vx^{k+1}) + 2\rho(\vx^{k+1}-\vx^k)) \le \frac{\vareps}{2}, \forall k \ge 0.
	\end{equation}
	The above inequality together with $2\rho \Vert \vx^{S}-\vx^{S-1} \Vert \le \frac{\vareps}{2}$ gives
	\begin{equation*}
	\dist(\vzero, \partial \Phi(\vx^{S})) \le \vareps,
	\end{equation*}
	which implies that $\vx^{S}$ is an $\vareps$-stationary point to \eqref{eq:nc_prob}.
	
	Finally, we apply Lemma~\ref{lem:total-iter-apg} to obtain the overal complexity and complete the proof.

\subsection{Proof of Claim~\ref{clm:poly}}	
	Let $X_*$ be the optimal solution set of
	\begin{equation}
	\min_{\vx\in X} f(\vx):=\frac{1}{2}\|\vA\vx-\vb\|^2.
	\end{equation}
	Then for any $\bar\vx\in X_*$, $\vA\bar\vx-\vb=\vzero$ by our assumption.
	From \cite[Theorem 18]{wang2014iteration}, it follows that there is a constant $\kappa>0$ such that
	\begin{equation}\label{eq:error-bd}
	\|\vx -\Proj_{X_*}(\vx)\| \le \kappa \left\|\vx -\Proj_X\big(\vx - \nabla f(\vx)\big)\right\|, \, \forall\, \vx\in X,   
	\end{equation}
	where $\Proj_X$ denotes the Euclidean projection onto $X$.
	
	For any fixed $\vx\in X$, denote $\vu = \nabla f(\vx)$ and $\vv=\Proj_X(\vx - \vu)$. Then from the definition of the Euclidean projection, it follows that $\langle \vv - \vx+\vu, \vv - \vx'\rangle \le 0, \,\forall\, \vx'\in X$. Letting $\vx'= \vx$, we have $\|\vv-\vx\|^2 \le \langle \vu, \vx-\vv\rangle$. On the other hand, for any $\vz \in \cN_X(\vx)$, we have from the definition of the normal cone that $\langle \vz , \vx - \vx'\rangle \ge 0,\,\forall\, \vx'\in X$. Hence, letting $\vx'=\vv$ gives $\langle \vz , \vx - \vv\rangle \ge 0$. Therefore, we have
	$$\|\vv-\vx\|^2 \le \langle \vu, \vx-\vv\rangle + \langle \vz , \vx - \vv\rangle \le \|\vx - \vv\|\cdot\|\vu+\vz\|,$$
	which implies $\|\vv-\vx\|\le \|\vu+\vz\|$. By the definition of $\vu$ and $\vv$ and noticing that $\vz$ is an arbitrary vector in $\cN_X(\vx)$, we obtain
	$$\left\|\vx -\Proj_X\big(\vx - \nabla f(\vx)\big)\right\| \le \dist\left(\vzero, \nabla f(\vx) + \cN_X(\vx)\right).$$
	The above inequality together with \eqref{eq:error-bd} gives
	\begin{equation}\label{eq:bd-to-dist}\|\vx -\Proj_{X_*}(\vx)\| \le \kappa\cdot \dist\left(\vzero, \nabla f(\vx) + \cN_X(\vx)\right), \, \forall\, \vx\in X.
	\end{equation}
	
	Now by the fact $\vA \Proj_{X_*}(\vx) = \vb$, we have $\|\vA\vx-\vb\|\le \|\vA\|\cdot \|\vx -\Proj_{X_*}(\vx)\|$. Therefore, from \eqref{eq:bd-to-dist} and also noting $\nabla f(\vx)=\vA^\top(\vA\vx-\vb)$, we obtain \eqref{eq:reg-cond-poly} with $v= \frac{1}{\kappa\|\vA\|}$.

\subsection{Proof of Claim~\ref{clm:ball}}
	Without loss of generality, we assume $r=1$ and $\vA\vA^\top = \vI$, i.e., the row vectors of $\vA$ are orthonormal. Notice that
	\begin{equation}\label{eq:n-cone-ball}
	\cN_X(\vx)=\left\{
	\begin{array}{ll}
	\{\vzero\}, &\text{ if }\|\vx\| < 1,\\[0.1cm]
	\{\lambda \vx: \lambda\ge0 \}, & \text{ if }\|\vx\| = 1.
	\end{array}
	\right.
	\end{equation}
	Hence, if $\|\vx\| < 1$, \eqref{eq:reg-cond-poly} holds with $v=1$ because $\vA\vA^\top = \vI$. In the following, we focus on the case of $\|\vx\| = 1$.
	
	When $\|\vx\| = 1$, we have from \eqref{eq:n-cone-ball} that
	\begin{equation}\label{eq:dist-min}\dist\left(\vzero, \vA^\top(\vA\vx-\vb)+\cN_X(\vx)\right)=\min_{\lambda\ge 0}\|\vA^\top(\vA\vx-\vb)+\lambda \vx\|.
	\end{equation}
	If the minimizer of the right hand side of \eqref{eq:dist-min} is achieved at $\lambda = 0$, then \eqref{eq:reg-cond-poly} holds with $v=1$. Otherwise, the minimizer is $\lambda = -\vx^\top\vA^\top(\vA\vx-\vb)\ge0$. With this $\lambda$, we have
	\begin{align*}
	&~ \left[\dist\left(\vzero, \vA^\top(\vA\vx-\vb)+\cN_X(\vx)\right)\right]^2\\ 
	= &~ \|\vA^\top(\vA\vx-\vb)-\vx^\top\vA^\top(\vA\vx-\vb)\vx\|^2 \\
	= & ~(\vA\vx-\vb)^\top \vA(\vI-\vx\vx^\top)\vA^\top(\vA\vx-\vb).
	\end{align*}
	Let 
	\begin{align}\label{eq:v-min}
	v_*=\min_\vx \Big\{&\lambda_{\min}\left(\vA(\vI-\vx\vx^\top)\vA^\top\right), \nonumber\\
	&~\st~  \vx^\top\vA^\top(\vA\vx-\vb)\le0,\ \|\vx\|=1\Big\},
	\end{align}
	where $\lambda_{\min}(\cdot)$ denotes the minimum eigenvalue of a matrix. Then $v_*$ must be a finite nonnegative number. We show $v_*>0$. Otherwise suppose $v_*=0$, i.e., there is a $\vx$ such that $\vx^\top\vA^\top(\vA\vx-\vb)\le0$ and $\|\vx\|=1$, and also $\vA(\vI-\vx\vx^\top)\vA^\top$ is singular. Hence, there exists a $\vy\neq\vzero$ such that \begin{equation}\label{eq:sing-eq}
	\vA(\vI-\vx\vx^\top)\vA^\top\vy=\vzero.    
	\end{equation} 
	By scaling, we can assume $\|\vy\|=1$. Let $\vz = \vA^\top\vy$. Then $\|\vz\|=1$, and from \eqref{eq:sing-eq}, we have $\vz^\top(\vI-\vx\vx^\top)\vz = 1 - (\vz^\top\vx)^2 = 0$. %\ZL{$\vz = -\vx$ is also possible.} 
	This equation implies $\vz = \vx$ or $\vz = -\vx$, because both $\vx$ and $\vz$ are unit vectors. Without loss of generality, we can assume $\vz = \vx$.
	Now recall $\vb=\vA\hat\vx$ with $\|\hat\vx\|<1$ and notice 
	\begin{align*}
	\vx^\top\vA^\top(\vA\vx-\vb) &= \vz^\top\vA^\top(\vA\vz-\vb) \\ 
	&= \vz^\top\vA^\top(\vy - \vA\hat\vx) = 1 - \vz^\top\vA^\top\vA\hat\vx > 0,
	\end{align*}
	where the inequality follows from $\|\vA\|=1$, $\|\vz\|=1$, and $\|\hat\vx\|<1$. Hence, we have a contradiction to $\vx^\top\vA^\top(\vA\vx-\vb)\le0$. Therefore, $v_*>0$.	
	
	Putting the above discussion together, we have that \eqref{eq:reg-cond-poly} holds with $v=\min\{1, v_*\}$, where $v_*$ is defined in \eqref{eq:v-min}. This completes the proof.

\subsection{Proof of Theorem~\ref{thm:ialm-compl}}
	First, note that $\cL_{\beta_k}(\cdot,\vy^k)$ is $\hat{L}_k$-smooth and $\hat{\rho}_k$-weakly convex, with $\hat{L}_k$ and $\hat{\rho}_k$ defined in \eqref{eq:hat_rho_k}. Then by the $\vx$ update in Algorithm~\ref{alg:ialm}, the stopping conditions of Algorithms~\ref{alg2} and \ref{alg:ippm}, and following the same proof of $\vareps$ stationarity as in Theorem~\ref{thm:ippm-compl}, we have
	\begin{equation}\label{temp1}
	\dist(\vzero, \partial_x \cL_{\beta_k}(\vx^{k+1},\vy^k)) \le \vareps, \forall k \ge 0.
	\end{equation}
	
	Next we give a uniform upper bound of the dual variable. By \eqref{eq:alm-y}, \eqref{eq:ialm_step}, $\vy^0 = \vzero$, and also the setting of $\gamma_k$, we have that $\forall k \ge 0$,
	\begin{align}
	\Vert \vy^k \Vert &\le \sum_{t=0}^{k-1} w_t \Vert \vc(\vx^{t+1}) \Vert \le \sum_{t=0}^{\infty} w_t \Vert \vc(\vx^{t+1}) \Vert \nonumber \\
	&\le \bar{c} w_0 \Vert \vc(\vx^0) \Vert (\log 2)^2 =: y_{\max}, \label{eq:y-bound}
	\end{align}
	where we have defined $\bar{c} = \sum_{t=0}^{\infty} \frac{1}{(t+1)^2 [\log (t+2)]^2}$.
	
	Combining the above bound with the regularity assumption \eqref{eq:regularity}, we have the following feasibility bound: for all $k \ge 1$,
	\begin{align}
	&\Vert \vc(\vx^k) \Vert \le \frac{1}{v\beta_{k-1}} \dist \left(0, \partial h(\vx^k) + \beta_{k-1} J_c(\vx^k)^{\top} \vc(\vx^k) \right) \nonumber \\
	= &\frac{1}{v\beta_{k-1}} \dist \big(0, \partial_x \cL_{\beta_k}(\vx^{k},\vy^{k-1}) - \nabla g(\vx^{k}) \nonumber \\ 
	&- J_c(\vx^k)^{\top} \vy^{k-1} \big) \nonumber \\
	\le &\frac{1}{v\beta_{k-1}} \big( \dist \left(0, \partial_x \cL_{\beta_k}(\vx^{k},\vy^{k-1}) \right) + \Vert \nabla g(\vx^k) \Vert \nonumber \\ 
	&+ \Vert J_c(\vx^k) \Vert \Vert \vy^{k-1} \Vert \big)  \nonumber \\
	\le &\frac{1}{v\beta_{k-1}} (\vareps + B_0 + B_c y_{\max}), \label{eq:c-bound}
	\end{align}
	where the third inequality follows from \eqref{temp1}, (\ref{P3-eqn-33}a), (\ref{P3-eqn-33}c), and \eqref{eq:y-bound}. 
	
	Now we define
	\begin{equation} \label{eq:K}
	K = \left\lceil \log_\sigma C_\vareps \right\rceil + 1, \text{ with } C_\vareps = \frac{\vareps + B_0 + B_c y_{\max}}{v \beta_0 \vareps}.
	\end{equation}
	Then by \eqref{eq:c-bound} and the setting of $\beta_k$ in Algorithm~\ref{alg:ialm}, we have $\Vert \vc(\vx^K) \Vert \le \vareps$. 
	Also recalling \eqref{temp1}, %$\forall k \ge 0$, since $\dist(0, \partial_x \cL_{\beta_k}(\vx^{k+1},\vy^k)) \le \vareps$, 
	we have
	\begin{equation*} 
	\dist(\vzero, \partial f_0(\vx^{k+1}) + J_c(\vx^{k+1})^{\top} \ (\vy^k + \beta_{k}\vc(\vx^{k+1}))) \le \vareps.
	\end{equation*}
	%By above two results, we conclude $(\vx^K, \vy^{K-1} + \beta_{K-1} \vc(\vx^K))$ 
	Therefore, $\vx^K$ is an $\vareps$-KKT point of \eqref{eq:ncp_eq} with the corresponding multiplier $\vy^{K-1} + \beta_{K-1} \vc(\vx^K)$, according to Definition \ref{def:eps-kkt}.
	
	In the rest of the proof, we bound the maximum number of iPPM iterations needed to stop Algorithm \ref{alg:ippm}, and the number of APG iterations per iPPM iteration needed to stop Algorithm \ref{alg2}, for each iALM outer iteration.
	
	Denote $\vx_k^t$ as the $t$-th iPPM iterate within the $k$-th outer iteration of iALM. Then at $\vx_k^t$,  
	% In the $t$-th iPPM iteration of the $k$-th iALM iteration, 
	we use APG to minimize $F_k^t(\cdot):=\cL_{\beta_k}(\cdot,\vy^k)+\hat{\rho}_k \Vert \cdot - \vx_k^t \Vert^2$, which is $\tilde{L}_k := (\hat{L}_k + 2 \hat{\rho}_k)$-smooth and $\hat{\rho}_k$-strongly convex. Hence, by Lemma \ref{lem:total-iter-apg}, at most $T_k^{\mathrm{APG}}$ (that is independent of $t$) APG iterations are required to find an $\frac{\vareps}{4}$ stationary point of $F_k^t(\cdot)$, where
	\begin{equation}\label{Tk-apg}
	T_k^{\mathrm{APG}} = \left\lceil \sqrt{\frac{\tilde{L}_k}{\hat{\rho}_k}}\log \frac{1024\tilde{L}_k^2(\tilde{L}_k+\hat{\rho}_k)D^2}{\vareps^2 \hat{\rho}_k} \right\rceil + 1, \forall k \ge 0.
	\end{equation}
	
	In addition, recalling the definition of $\cL_{\beta}$ in \eqref{eq:al-fun}, observe that for all $k\ge1$,
	\begin{align}
	\cL_{\beta_k}(\vx^k,\vy^k) \le & B_0 + \frac{\vareps+B_0+B_c y_{\max}}{v{\beta_0}} \nonumber \\
	&\left(y_{\max}+\frac{\sigma(\vareps + B_0 + B_c y_{\max})}{2v}\right)\sigma^{1-k} \label{eq:L-ub}\\
	\le & B_0 + \tilde{c}, \forall k \ge 1, \nonumber 
	\end{align}
	where $B_0$ is given in (\ref{P3-eqn-33}a) and\\ $\tilde{c} := \frac{\vareps+B_0+B_c y_{\max}}{v\beta_0} \left( y_{\max}+\frac{\sigma(\vareps+B_0+B_c y_{\max})}{2v} \right)$. Furthermore,
	\begin{align*}
	\cL_{\beta_0}(\vx^0,\vy^0) \le B_0 + \frac{\beta_0}{2} \Vert \vc(\vx^0) \Vert^2,
	\end{align*}
	and $\forall k \ge 0, \forall \vx \in \dom (h), $
	\begin{align}\label{eq:L-lb}
	\cL_{\beta_k}(\vx,\vy^k) \ge f_0(\vx) + \langle \vy^k,\vc(\vx) \rangle \ge -B_0-y_{\max} \bar{B}_c,
	\end{align}
	where $\bar{B}_c$ is given in (\ref{P3-eqn-33}c).
	
	Combining all three inequalities above with Theorem \ref{thm:ippm-compl} and $\hat{\rho}_k$-weak convexity of $\cL_{\beta_k}(\cdot,\vy^k)$, we conclude at most $T_k^{\mathrm{PPM}}$ iPPM iterations are needed to guarantee that $\vx^{k+1}$ is an $\vareps$ stationary point of $\cL_{\beta_k}(\cdot, \vy^k , \vz^k  )$, with
	\small{
		\begin{align}\label{Tk-ppm}
		T_k^{\mathrm{PPM}} &= \left\lceil \frac{32(\rho_0 + y_{\max}\bar{L}+\beta_k\rho_c)(2B_0+y_{\max}\bar{B}_c+\tilde{c})}{\vareps^2} \right\rceil, \forall k \ge 1\\ 
		T_0^{\mathrm{PPM}} &= \left\lceil\frac{32\rho_0}{\vareps^2}(2B_0+y_{\max}\bar{B}_c + \frac{\beta_0}{2} \Vert \vc(\vx^0) \Vert^2)\right\rceil.
		\end{align}
	}
	\normalsize
	
	Consequently, we have shown that at most $T$ total APG iterations are needed to find an $\vareps$-KKT point of \eqref{eq:ncp_eq}, where
	\begin{equation} \label{eq:T}
	T = \sum_{k=0}^{K-1} T_k^{\mathrm{PPM}} T_k^{\mathrm{APG}},
	\end{equation}
	with $K$ given in \eqref{eq:K}, $T_k^{\mathrm{APG}}$ given in \eqref{Tk-apg}, and $T_k^{\mathrm{PPM}}$ given in \eqref{Tk-ppm}. 
	
	The result in \eqref{eq:T} immediately gives us the following complexity results.
	
	By \eqref{eq:K}, we have $K = \tilde{O}(1)$ and $\beta_K = O(\vareps^{-1})$. Hence from \eqref{eq:hat_rho_k}, we have $\hat{\rho}_k = O(\beta_k),\hat{L}_k = O(\beta_k), \forall k \ge 0 $. Then by \eqref{Tk-apg}, $T_k^{\mathrm{APG}} = \tilde{O}(1), \forall k \ge 0$, and by \eqref{Tk-ppm}, we have 
	%$T_0^{\mathrm{PPM}} = O(\vareps^{-2}), T_K^{\mathrm{PPM}} = O(\vareps^{-3})$, and $T_k^{\mathrm{PPM}}$ is monotonically increasing, $\forall k \ge 1$. So 
	$T_k^{\mathrm{PPM}} = O(\vareps^{-3}), \forall k\ge0$. Therefore, in \eqref{eq:T}, $T = \tilde{O}(\vareps^{-3})$ for a general nonlinear $\vc(\cdot)$.
	
	For the special case when $\vc(\vx) = \vA \vx-\vb$, $\Vert \vc(\vx) \Vert^2$ is convex, so we have $\rho_c = 0$. Thus by \eqref{eq:hat_rho_k}, $\hat{\rho}_k = O(1), \forall k \ge 0 $. Hence in \eqref{Tk-ppm}, $T_k^{\mathrm{PPM}} = O(\vareps^{-2}), \forall k \ge 0$, and in \eqref{Tk-apg}, $T_k^{\mathrm{APG}} = \tilde{O}(\vareps^{-\frac{1}{2}}), \forall k \ge 0$. Therefore, by \eqref{eq:T}, $T = \tilde{O}(\vareps^{-\frac{5}{2}})$ for an affine $\vc(\cdot)$. This completes the proof.

\subsection{Proof of Theorem~\ref{thm:ialm-cplx-2}}
	First, by \eqref{eq:alm-y}, \eqref{wk-general} and $\vy^0 = \vzero$, we have
	\begin{align} \label{yk-bound-general}
	\Vert \vy^{k} \Vert &\le \sum_{t=0}^{k-1} w_t \Vert \vc(\vx^{t+1}) \Vert = \sum_{t=0}^{k-1} M(t+1)^q := y_k \nonumber\\ 
	&= O(k^{q+1}), \forall k \ge 0. 
	\end{align}
	Following the first part of the proof of Theorem \ref{thm:ialm-compl}, we can easily show that at most $K=O(\log \vareps^{-1})$ outer iALM iterations are needed to guarantee $\vx^{K}$ to be an $\vareps$-KKT point of \eqref{eq:ncp_eq}. Hence, $\beta_k = O(\vareps^{-1}), \forall\, 0 \le k \le K$.
	
	Combining the above bound on $K$ with \eqref{yk-bound-general}, we have 
	\begin{align*}
	\Vert \vy^k \Vert &\le y_{K} := \sum_{t=0}^{K-1} M(K+1)^q = O(K^{q+1})\\ 
	&= O\big((\log \vareps^{-1})^{q+1}\big),\, \forall 1 \le k \le K.
	\end{align*}
	Hence from \eqref{eq:hat_rho_k}, we have $\hat{\rho}_k = O(\beta_k) = O(\vareps^{-1}), \hat{L}_k = O(\beta_k) = O(\vareps^{-1}),\, \forall\, 0 \le k \le K$. 
	
	Notice that \eqref{eq:L-ub} and \eqref{eq:L-lb} still hold with $y_{\max}$ replaced by $y_k$. Hence,  $\forall k \le K, \forall \vx \in \dom(h),$
	\begin{equation*}
	\cL_{\beta_k}(\vx^k,\vy^k) - \cL_{\beta_k}(\vx,\vy^k) = O\left(y_k\left(1+\frac{y_k}{\beta_k}\right)\right).
	\end{equation*}
	The above equation together with Theorem \ref{thm:ippm-compl} gives that for any $k \le K$, at most $T_k^{\mathrm{PPM}}$ iPPM iterations are needed to terminate Algorithm \ref{alg:ippm} at the $k$-th outer iALM iteration, where
	\begin{align*}
	T_k^{\mathrm{PPM}} &= \left\lceil \frac{32\hat{\rho}_k}{\vareps^2}\big(\cL_{\beta_k}(\vx^k,\vy^k)-\min_{\vx} \cL_{\beta_k}(\vx,\vy^k)\big) \right\rceil\\ 
	&= O \left(\frac{\hat{\rho}_k y_k\left(1+\frac{y_k}{\beta_k}\right)}{\vareps^2} \right).
	\end{align*}
	Also, by Lemma \ref{lem:total-iter-apg}, at most $T_k^{\mathrm{APG}}$ APG iterations are needed to terminate Algorithm \ref{alg2}, where
	\begin{equation*}
	T_k^{\mathrm{APG}} = O\left(\sqrt{\frac{\hat{L}_k}{\hat{\rho_k}}}\log \vareps^{-1} \right), \forall k \ge 0.
	\end{equation*}
	
	Therefore, for all $k \le K$, 
	\begin{align*}
	T_k^{\mathrm{PPM}} T_k^{\mathrm{APG}} &= O\left( \frac{\sqrt{\hat{L}_k \hat{\rho}_k}\log \vareps^{-1}}{\vareps^2} y_k\left(1+\frac{y_k}{\beta_k}\right) \right) \\
	&= O\left( \frac{y_k \log \vareps^{-1}}{\vareps^2} (\beta_k + y_k) \right) \\
	&= O\left( \frac{k^{q+1} \log \vareps^{-1}}{\vareps^2} (\sigma^k + k^{q+1}) \right)\\
	& = O\left( \frac{K^{q+1} \log \vareps^{-1}}{\vareps^2} (\sigma^K + K^{q+1}) \right)\\
	&= O\left( \frac{(\log \vareps^{-1})^{q+2}}{\vareps^2} \left(\frac{1}{\vareps} + (\log \vareps^{-1})^{q+1}\right) \right)\\
	&= O\left( \frac{(\log \vareps^{-1})^{q+2}}{\vareps^3} \right),
	\end{align*}
	where the second equation is from $\hat{L}_k = O(\beta_k)$ and $\hat{\rho}_k = O(\beta_k)$ for a general nonlinear $\vc(\cdot)$, and the fifth one is obtained by $K = O(\log \vareps^{-1})$.
	
	Consequently, for a general nonlinear $\vc(\cdot)$, at most $T$ APG iterations in total are needed to find the $\vareps$-KKT point $\vx^K$, where
	\begin{equation*}
	T = \sum_{k=0}^{K-1} T_k^{\mathrm{PPM}} T_k^{\mathrm{APG}} = O\left(K \vareps^{-3} (\log \vareps^{-1})^{q+2} \right)  =\tilde{O}\left(\vareps^{-3} \right).
	\end{equation*}
	
	In the special case when $\vc(\vx) = \vA \vx-\vb$, the term $\Vert \vc(\vx) \Vert^2 = \Vert \vA \vx-\vb \Vert^2$ is convex, so we have $\rho_c = 0$. Hence, by \eqref{eq:hat_rho_k}, $\hat{\rho}_k = O(1), \forall k \ge 0 $. Then following the same arguments as above, we obtain that for any $k \le K$,
	\begin{align*}
	T_k^{\mathrm{PPM}} T_k^{\mathrm{APG}} &= O\left(\frac{\sqrt{\hat{L}_k \hat{\rho}_k}}{\vareps^{2}} (\log \vareps^{-1})^{q+2}\right)\\ 
	&= O\left(\vareps^{-\frac{5}{2}} (\log \vareps^{-1})^{q+2}\right).
	\end{align*}
	Therefore, at most $T$ total APG iterations are needed to find the $\vareps$-KKT point $\vx^K$, where
	\begin{equation*}
	T = \sum_{k=0}^{K-1} T_k^{\mathrm{PPM}} T_k^{\mathrm{APG}} = \tilde{O}\left(\vareps^{-\frac{5}{2}} \right),
	\end{equation*}
	which completes the proof.
%--------------------------------------------

\section{COMPLEXITY WITH CONVEX INEQUALITY CONSTRAINTS}\label{sec:cvx_ineq}
%\color{blue}
In this subsection, we consider problems with a nonconvex objective and convex constraints, formulated as 
\begin{equation}\label{eq:nc+c}
	\min_{\vx\in\RR^n} \big\{f_0(\vx) := g(\vx) + h(\vx), \st \vA\vx=\vb, \vf(\vx) \le \vzero \big\},
\end{equation}
where $g$ is continuously differentiable but possibly nonconvex, $h$ is closed convex but possibly nonsmooth, $\vA \in \RR^{l \times n}$, $\vb \in \RR^{l}$, and $\vf = (f_1, \ldots, f_m): \RR^n\to\RR^m$ with each $f_j$ convex and $L^f_j$-smooth. 
Denote $[\va]+$ as a vector taking component-wise positive part of $\va$ and $J_f(\vx)$ as the Jacobi matrix of $\vf$ at $\vx$. 
Assumption \ref{assumption:regularity} is generalized to cover inequality constraints:
\begin{assumption}[generalized regularity] \label{assumption:regularity_ineq}
	There is some $v > 0$ such that for any $k\ge1$, 
	\begin{equation}\label{eq:regularity_ineq}
		\textstyle v \sqrt{\Vert \vA\vx^k-\vb \Vert^2 + \Vert [\vf(\vx^k)]_+ \Vert^2} \le \dist \left(-\vA^{\top} (\vA\vx^k-\vb)-J_f(\vx^k)^{\top} [\vf(\vx^k)]_+, \frac{\partial h(\vx^k)}{\beta_{k-1}} \right).
	\end{equation}
\end{assumption}
%\yx{Did we define the operation $[\cdot]_+$ and the Jacobi matrix before?}
%Generalized from \eqref{eq:al-fun}, t
The augmented Lagrangian function of \eqref{eq:nc+c} is (c.f. \citep{li2020augmented})
\begin{equation}\label{eq:AL_ineq}
	\cL_{\beta}(\vx,\vp) = f_0(\vx)+ \vy^{\top} (\vA\vx-\vb) + \frac{\beta}{2} \Vert \vA\vx-\vb \Vert^2 + \frac{1}{2\beta}(\Vert [\vz+\beta \vf(\vx)]_+ \Vert^2-\Vert \vz \Vert^2) ,
\end{equation} %111
where $\vp = (\vy; \vz)$ is the multiplier vector.
Generalized from Definition \ref{def:eps-kkt}, $\vareps$-KKT point is defined as follows.
\begin{definition}[$\vareps$-KKT point]\label{def:eps-kkt-ineq}
	Given $\vareps \geq 0$, a point $\vx \in \RR^n$ is called an $\vareps$-KKT point to \eqref{eq:nc+c} if there exists $\vy\in\RR^l$ and $\vz \in \RR^m$ such that
	\begin{equation}\label{eq:kkt_ineq}
		\Vert \vA\vx-\vb \Vert \leq \vareps,\quad
		\dist\left(\vzero, \partial f_0(\vx)+\vA^\top  \vy + J_f^\top(\vx) \  \vz \right) \leq \vareps, \quad
		\sum_{i=1}^{m} | z_i f_i(\vx) | \le \vareps.
	\end{equation}
\end{definition}

Algorithm \ref{alg:ialm_ineq} is a direct modification of Algorithm \ref{alg:ialm} for the case of general convex constraints. 
\begin{algorithm} % update for convex constraints only!
\caption{Inexact augmented Lagrangian method (iALM) for \eqref{eq:nc+c}}\label{alg:ialm_ineq}
\DontPrintSemicolon
\textbf{Initialization:} choose $\vx^0\in\dom(f_0), \vy^0 = \vzero$, $\vz^0 = \vzero$, $\beta_0>0$, $\sigma>1$\; %let $\hat{\vareps} = 111111$\; % 111111
\For{$k=0,1,\ldots,$}{
	Let $\beta_k=\beta_0\sigma^k$, $\phi(\cdot)=\cL_{\beta_k}(\cdot,p^k)-h(\cdot)$, and
	\begin{equation}\label{eq:hat_L_k}
\hat{L}_k = L_0 + \beta_k \Vert \vA^{\top}\vA \Vert + \sum_{i=1}^{m}(\beta_k B_i^f(B_i^f+L_i^f)+L_i^f|z^k_i|).
	\end{equation}\;
	Call Algorithm~\ref{alg:ippm} to obtain $\vx^{k+1} \leftarrow \mathrm{iPPM}(\phi,h,\vx^k, \rho_0, \hat{L}_k, \vareps)$\; % \hat{\vareps}
	Update $\vy, \vz$ by
	\begin{align}
		\vy^{k+1} &= ~\vy^k  + w_k (\vA\vx^{k+1}-\vb),\label{eq:alm-y-ineq}\\
		z_i^{k+1} &= z_i^k + w_k \max \left\{ -\frac{z_i^k}{\beta_k},f_i(\vx^{k+1}) \right\}, \label{eq:alm-z-ineq}
	\end{align}
	where 
	\begin{align}\label{eq:ialm_step_ineq}
		\textstyle w_k &= w_0 \min\left\{1, \ \frac{\gamma_k }{\max\{\Vert \vA\vx^{k+1}-\vb \Vert, \Vert [\vf(\vx^{k+1})]_+ \Vert\} }, \ \frac{\beta_k}{w_0} \right\},\\
		\textstyle \gamma_k &= \frac{(\log 2)^2 \min \{ \Vert \vA\vx^{0}-\vb \Vert, \Vert [\vf(\vx^0)]_+ \Vert \} }{(k+1)[\log(k+2)]^2}.
	\end{align}
}
\end{algorithm}

Let $A_i$ be the $i$-th row of $\vA$. In this subsection, modified from \eqref{P3-eqn-33}, we denote 
\begin{subequations}\label{P3-eqn-33-ineq}
	\begin{align}
		&B_0=\max_{\vx\in\dom(h)}\max \big\{|f_0(\vx)|,\left\Vert \nabla g(\vx) \right\Vert\big\}, B_f = \max_{\vx\in\dom(h)}\Vert J_f(\vx) \Vert, \\
		&B_i=\max_{\vx\in\dom(h)}\max \big\{|A_i \vx - b_i|,\left\Vert A_i \right\Vert\big\}, \forall\, i \in [l], \\
		& B_i^f=\max_{\vx\in\dom(h)}\max \big\{|f_i(\vx)|,\left\Vert \nabla f_i(\vx) \right\Vert\big\}, \forall\, i \in [m], \\
		& \textstyle \bar{B}_c = \sqrt{\sum_{i=1}^l B_i^2},\quad \bar{B}_f = \sqrt{\sum_{i=1}^m (B_i^f)^2}, 
		% & \rho_c = \sum_{i=1}^{l} B_i \rho_i, \quad L_c = \sum_{i=1}^{l}B_i L_i + B_i^2, \label{eq:rho_c}
	\end{align}
\end{subequations}

We give the main convergence result of Algorithm \ref{alg:ialm_ineq} below.

\begin{theorem}[total complexity of iALM with convex constraints]\label{thm:ialm-compl-ineq}
	Suppose that all conditions in Assumptions~\ref{assump:smooth-wc}, \ref{assump:composite} and \ref{assumption:regularity_ineq} hold. Given $\vareps>0$, then Algorithm~\ref{alg:ialm_ineq} needs $\tilde{O}(\vareps^{-\frac{5}{2}})$ $\mathrm{APG}$ iterations to produce an $\vareps$-KKT solution of \eqref{eq:nc+c}. 
\end{theorem}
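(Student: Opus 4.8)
The plan is to mirror the proof of Theorem~\ref{thm:ialm-compl}, tracking the three quantities that govern the total count: the number $K$ of outer iALM iterations needed for primal feasibility, the smoothness $\hat L_k$ and weak-convexity constant of the AL function, and a uniform bound on the multiplier $\vp^k=(\vy^k;\vz^k)$. The key structural observation — and the reason the convex-constrained case keeps the faster rate — is that the inequality-penalty term $\frac{1}{2\beta}(\|[\vz+\beta\vf(\vx)]_+\|^2-\|\vz\|^2)$ in \eqref{eq:AL_ineq} is \emph{convex} in $\vx$ whenever each $f_i$ is convex, since $t\mapsto\frac{1}{2\beta}([t]_+)^2$ is convex and nondecreasing and each $z_i+\beta f_i(\vx)$ is convex. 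Together with the affine term and $\frac{\beta}{2}\|\vA\vx-\vb\|^2$ being convex, this shows $\cL_{\beta_k}(\cdot,\vp^k)-h(\cdot)$ is $\rho_0$-weakly convex and $\hat L_k$-smooth with $\hat L_k$ as in \eqref{eq:hat_L_k}; crucially the weak-convexity constant is $\rho_0=O(1)$ and does \emph{not} grow with $\beta_k$.

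First I would record that, by the stopping rules of Algorithms~\ref{alg2} and~\ref{alg:ippm} and the stationarity argument in the proof of Theorem~\ref{thm:ippm-compl}, each inner solve outputs $\vx^{k+1}$ with $\dist(\vzero,\partial_x\cL_{\beta_k}(\vx^{k+1},\vp^k))\le\vareps$. Next I would prove the multiplier is uniformly bounded. The cap $w_k\le\beta_k$ in \eqref{eq:ialm_step_ineq} forces $z_i^{k+1}\ge z_i^k\bigl(1-w_k/\beta_k\bigr)\ge0$, so $\vz^k\ge\vzero$ for all $k$; then from \eqref{eq:alm-y-ineq}, \eqref{eq:alm-z-ineq} the increments satisfy $\|\vy^{k+1}-\vy^k\|\le w_k\|\vA\vx^{k+1}-\vb\|$ and $\|\vz^{k+1}-\vz^k\|\le w_k\bigl(\|[\vf(\vx^{k+1})]_+\|+\|\vz^k\|/\beta_k\bigr)$, and since $w_k\le w_0\gamma_k/\max\{\|\vA\vx^{k+1}-\vb\|,\|[\vf(\vx^{k+1})]_+\|\}$ with $\{\gamma_k\}$ summable, a short Grönwall-type argument (using that $\beta_k=\beta_0\sigma^k$ grows geometrically so $\sum_k\gamma_k/\beta_k<\infty$) gives $\|\vy^k\|,\|\vz^k\|\le p_{\max}$ for an absolute constant $p_{\max}$. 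Plugging this and the inner stationarity bound into the generalized regularity condition~\eqref{eq:regularity_ineq}, exactly as in the estimate leading to \eqref{eq:c-bound}, yields near-feasibility $\sqrt{\|\vA\vx^k-\vb\|^2+\|[\vf(\vx^k)]_+\|^2}\le\frac{1}{v\beta_{k-1}}(\vareps+B_0+O(p_{\max}))$, so after $K=\lceil\log_\sigma C_\vareps\rceil+1=\tilde O(1)$ outer iterations the primal residual drops below $\vareps$, and $\beta_k=O(\vareps^{-1})$ for all $k\le K$.

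Then I would verify $\vx^K$ is an $\vareps$-KKT point in the sense of Definition~\ref{def:eps-kkt-ineq}, with $\vy:=\vy^{K-1}+\beta_{K-1}(\vA\vx^K-\vb)$ and $\vz:=[\vz^{K-1}+\beta_{K-1}\vf(\vx^K)]_+\ge\vzero$: primal feasibility comes from the previous paragraph, and dual stationarity from $\dist(\vzero,\partial_x\cL_{\beta_{K-1}}(\vx^K,\vp^{K-1}))\le\vareps$ after reading off $\nabla_x$ of~\eqref{eq:AL_ineq}. For the complementarity term $\sum_i|z_if_i(\vx^K)|$ I would split on the sign of $z_i^{K-1}+\beta_{K-1}f_i(\vx^K)$: outside its support $z_i=0$; on the support, either $f_i(\vx^K)>0$, so $|z_if_i(\vx^K)|\le z_i\,\|[\vf(\vx^K)]_+\|\le z_i\vareps$, or $f_i(\vx^K)\le0$, so $0\le z_i=z_i^{K-1}+\beta_{K-1}f_i(\vx^K)\le z_i^{K-1}$ and $|f_i(\vx^K)|\le z_i^{K-1}/\beta_{K-1}$, giving $|z_if_i(\vx^K)|\le (z_i^{K-1})^2/\beta_{K-1}$. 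Both are $O(\vareps)$ because $\|\vz^{K-1}\|\le p_{\max}$ and $\beta_{K-1}=O(\vareps^{-1})$ (also using $\beta_{K-1}\|[\vf(\vx^K)]_+\|=O(1)$ to bound $z$), so after rescaling $\vareps$ by a fixed constant the KKT conditions hold — this is the same bookkeeping as Remark~\ref{rm:ineq-case}.

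Finally, for the total count, I would bound the AL gap $\cL_{\beta_k}(\vx^k,\vp^k)-\min_\vx\cL_{\beta_k}(\vx,\vp^k)=O(1)$ uniformly in $k\le K$, by the arguments behind \eqref{eq:L-ub} and \eqref{eq:L-lb} with $\vp^k$ bounded and $\vx^k$ near-feasible (so $\frac{\beta_k}{2}\|\vA\vx^k-\vb\|^2=O(1)$ and $\frac{1}{2\beta_k}\|[\vz^k+\beta_k\vf(\vx^k)]_+\|^2=O(1/\beta_k)$). Then Theorem~\ref{thm:ippm-compl} gives $T_k^{\mathrm{PPM}}=O(\rho_0\cdot O(1)/\vareps^2)=O(\vareps^{-2})$ iPPM steps per outer iteration, and each calls APG on a $\rho_0$-strongly-convex, $(\hat L_k+2\rho_0)$-smooth model, which by Lemma~\ref{lem:total-iter-apg} terminates in $T_k^{\mathrm{APG}}=O(\sqrt{\hat L_k/\rho_0}\log\vareps^{-1})=\tilde O(\vareps^{-1/2})$ iterations since $\hat L_k=O(\beta_k)=O(\vareps^{-1})$. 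Hence the grand total is $T=\sum_{k=0}^{K-1}T_k^{\mathrm{PPM}}T_k^{\mathrm{APG}}=\tilde O(1)\cdot O(\vareps^{-2})\cdot\tilde O(\vareps^{-1/2})=\tilde O(\vareps^{-5/2})$. I expect the main obstacle to be the dual-boundedness step together with the complementarity estimate: unlike in Theorem~\ref{thm:ialm-compl}, the multiplier update~\eqref{eq:alm-z-ineq} is clipped and carries a $\vz^k/\beta_k$ feedback term that must be controlled, and Definition~\ref{def:eps-kkt-ineq} adds a complementarity requirement with no analogue in the equality case, forcing the extra case analysis above.
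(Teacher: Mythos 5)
Your proposal is correct and follows essentially the same route as the paper's proof in Appendix~\ref{sec:cvx_ineq}: inner stationarity from Theorem~\ref{thm:ippm-compl}, a uniform bound on $(\vy^k,\vz^k)$ from the summable $\gamma_k$ and the nonnegativity $\vz^k\ge\vzero$ forced by $w_k\le\beta_k$, near-feasibility via Assumption~\ref{assumption:regularity_ineq} giving $K=\tilde O(1)$ and $\beta_k=O(\vareps^{-1})$, the same case split for the complementarity term (bounded by $z_{\max}\Vert[\vf(\vx^K)]_+\Vert+\beta_{K-1}\Vert[\vf(\vx^K)]_+\Vert^2+z_{\max}^2/\beta_{K-1}$), an $O(1)$ AL gap, and the product $T_k^{\mathrm{PPM}}T_k^{\mathrm{APG}}=O(\vareps^{-2})\cdot\tilde O(\vareps^{-1/2})$ exploiting that the weak-convexity constant stays at $\rho_0$. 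The only cosmetic deviations are that the paper bounds $\Vert\vz^k\Vert$ directly by $\sum_t w_t\Vert[\vf(\vx^{t+1})]_+\Vert$ (no Gr\"onwall step is needed, since the clipped branch of \eqref{eq:alm-z-ineq} only decreases $z_i^k$) and absorbs your final constant-rescaling of $\vareps$ into the enlarged threshold $\hat C_\vareps$ in \eqref{eq:K-ineq}.
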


\begin{proof}
First, note that $\cL_{\beta_k}(\cdot,\vp^k)$ is $\hat{L}_k$-smooth (c.f. (31) in \citep{li2020augmented}) and $\rho_0$-weakly convex, with $\hat{L}_k$ defined in \eqref{eq:hat_L_k}. Then by the $\vx$-update in Algorithm~\ref{alg:ialm_ineq} and Theorem~\ref{thm:ippm-compl}, we have
\begin{equation}\label{temp1-ineq}
	\dist(\vzero, \partial_x \cL_{\beta_k}(\vx^{k+1},\vp^k)) \le \vareps, \forall k \ge 0.
\end{equation}

Next we give a uniform upper bound of the dual variable. By \eqref{eq:alm-y-ineq}, \eqref{eq:alm-z-ineq}, \eqref{eq:ialm_step}, $\vy^0 = \vzero$, $\vz^0 = \vzero$, and also the setting of $\gamma_k$, we have that $\forall k \ge 0$,
\begin{align}
	\Vert \vy^k \Vert &\le \sum_{t=0}^{k-1} w_t \Vert \vA\vx^{t+1}-\vb \Vert \le \sum_{t=0}^{\infty} w_t \Vert \vA\vx^{t+1}-\vb \Vert \le \bar{c} w_0 \Vert \vA\vx^{0}-\vb \Vert (\log 2)^2 =: y_{\max}, \label{eq:y-bound-ineq} \\
	\Vert \vz^k \Vert &\le \sum_{t=0}^{k-1} w_t \Vert [\vf(\vx^{t+1})]_+ \Vert \le \sum_{t=0}^{\infty} w_t \Vert [\vf(\vx^{t+1})]_+ \Vert \le \bar{c} w_0 \Vert [\vf(\vx^0)]_+ \Vert (\log 2)^2 =: z_{\max}, \label{eq:z-bound-ineq}
\end{align}
where we have defined $\bar{c} = \sum_{t=0}^{\infty} \frac{1}{(t+1)^2 [\log (t+2)]^2}$. 

Combining the above bounds with the regularity assumption \eqref{eq:regularity_ineq}, we have the following feasibility bound: for all $k \ge 1$, 
\begin{align}
	&\sqrt{\Vert \vA\vx^{k}-\vb \Vert^2 + \Vert [\vf(\vx^k)]_+ \Vert^2} \le \frac{1}{v\beta_{k-1}} \dist \Big( 0, \partial h(\vx^k) + \beta_{k-1} \vA^{\top} (\vA\vx^{k}-\vb) + \beta_{k-1} J_f(\vx^k)^{\top} [\vf(\vx^k)]_+ \Big) \nonumber \\
	= &\frac{1}{v\beta_{k-1}} \dist \Big(0, \partial_x \cL_{\beta_{k-1}}(\vx^{k},\vp^{k-1}) - \nabla g(\vx^{k}) \nonumber \\ 
	&- \vA^{\top} \vy^{k-1} - \sum_{i=1}^{m} \big( [z_i^{k-1}+\beta_{k-1}f_i(\vx^k)]_+ - \beta_{k-1}[f_i(\vx^k)]_+ \big)\nabla f_i(\vx^k) \Big) \nonumber \\
	\le &\frac{1}{v\beta_{k-1}} \Big( \dist \left(0, \partial_x \cL_{\beta_{k-1}}(\vx^{k},\vp^{k-1}) \right) + \Vert \nabla g(\vx^k) \Vert + \Vert \vA \Vert \Vert \vy^{k-1} \Vert + \Vert J_f(\vx^k) \Vert \Vert \vz^{k-1} \Vert \Big)  \nonumber \\
	\le &\frac{1}{v\beta_{k-1}} (\vareps + B_0 + \Vert \vA \Vert y_{\max} + B_f z_{\max}), \label{eq:c-bound-ineq}
\end{align}
where the third inequality follows from
(\ref{P3-eqn-33-ineq}a), \eqref{temp1-ineq},  \eqref{eq:y-bound-ineq}, and \eqref{eq:z-bound-ineq}. 

Now we define
\begin{equation} \label{eq:K-ineq} 
	K = \left\lceil \log_\sigma \hat{C}_\vareps \right\rceil + 1, \text{ with } \hat{C}_\vareps = \max \left\{ C_{\vareps}, 3z_{\max}C_{\vareps}, 3\beta_0 \vareps C_{\vareps}^2, \frac{3z_{\max}^2}{\beta_0 \vareps} \right\}, \text{ where } C_\vareps = \frac{\vareps + B_0 + \Vert \vA \Vert y_{\max} + B_f z_{\max}}{v \beta_0 \vareps}.
\end{equation}
Then by \eqref{eq:c-bound-ineq}, \eqref{eq:K-ineq}, and the setting of $\beta_k$ in Algorithm~\ref{alg:ialm_ineq}, we have 
\begin{equation} \label{eq:pres-ineq}
	\sqrt{\Vert \vA\vx^{K}-\vb \Vert^2 + \Vert [\vf(\vx^K)]_+ \Vert^2} \le \vareps. 
\end{equation}
Also recalling \eqref{temp1-ineq}, %$\forall k \ge 0$, since $\dist(0, \partial_x \cL_{\beta_k}(\vx^{k+1},\vy^k)) \le \vareps$, 
we have
\begin{equation} \label{eq:dres-ineq} 
	\dist\Big(\vzero, \partial f_0(\vx^{K}) + \vA^{\top} \ (\vy^{K-1} + \beta_{K-1} (\vA\vx^{K}-\vb) ) + J_f(\vx^{K}) \ [\vz^{K-1}+\beta_{K-1}\vf(\vx^K)]_+ \Big) \le \vareps.
\end{equation}
%By above two results, we conclude $(\vx^K, \vy^{K-1} + \beta_{K-1} \vc(\vx^K))$ 
% *Need C-S!!

By \eqref{eq:alm-z-ineq} and \eqref{eq:ialm_step_ineq}, we have $w_k \le \beta_k$ and $\vz^k \ge \vzero$, $\forall k \ge 0$. Hence, for all $k \ge 0$, 
\begin{align*}
&\sum_{i=1}^m \Big| [z_i^k+\beta_k f_i(\vx^{k+1})]_+ f_i(\vx^{k+1}) \Big|\\ 
= &\sum_{i:z_i^k+\beta_k f_i(\vx^{k+1})>0} (z_i^k+\beta_k f_i(\vx^{k+1})) \big| f_i(\vx^{k+1}) \big|\\
\le &\sum_{i:z_i^k+\beta_k f_i(\vx^{k+1})>0, f_i(\vx^{k+1})>0} (z_i^k+\beta_k f_i(\vx^{k+1})) [f_i(\vx^{k+1})]_+  + \sum_{i: -\frac{z_i^k}{\beta_k}<f_i(\vx^{k+1})<0} z_i^k \cdot \frac{z_i^k}{\beta_k}\\
\le & \sum_{i=1}^{m} z_i^k [f_i(\vx^{k+1})]_+ + \sum_{i=1}^{m}\beta_k [f_i(\vx^{k+1})]_+^2 + \sum_{i=1}^{m} \frac{(z_i^k)^2}{\beta_k}\\
\le & \Vert \vz^k \Vert \cdot \Vert [\vf(\vx^{k+1})]_+ \Vert + \beta_k \Vert [\vf(\vx^{k+1})]_+ \Vert^2 + \frac{\Vert \vz^k \Vert^2}{\beta_k} \\
\le & z_{\max} \Vert [\vf(\vx^{k+1})]_+ \Vert + \beta_k \Vert [\vf(\vx^{k+1})]_+ \Vert^2 + \frac{z_{\max}^2}{\beta_k}
\end{align*}
By \eqref{eq:c-bound-ineq} and \eqref{eq:K-ineq}, 
\begin{align*}
& z_{\max} \Vert [\vf(\vx^{K})]_+ \Vert + \beta_{K-1} \Vert [\vf(\vx^{K})]_+ \Vert^2 + \frac{z_{\max}^2}{\beta_{K-1}}\\ 
\le & \frac{z_{\max}}{v\beta_{K-1}} (\vareps + B_0 + \Vert \vA \Vert y_{\max} + B_f z_{\max}) + \frac{1}{v^2\beta_{K-1}} (\vareps + B_0 + \Vert \vA \Vert y_{\max} + B_f z_{\max})^2 + \frac{z_{\max}^2}{\beta_{K-1}} \\
\le & \frac{\vareps}{3} + \frac{\vareps}{3} + \frac{\vareps}{3} = \vareps.
\end{align*}
Hence, 
\begin{equation} \label{eq:cs-ineq}
	\sum_{i=1}^m \Big| [z_i^{K-1}+\beta_{K-1} f_i(\vx^{K})]_+ f_i(\vx^{K}) \Big| \le \vareps.
\end{equation}

Therefore by \eqref{eq:pres-ineq}, \eqref{eq:dres-ineq} and \eqref{eq:cs-ineq},  $\vx^K$ is an $\vareps$-KKT point of \eqref{eq:nc+c} with the corresponding multiplier $\big(\vy^{K-1} + \beta_{K-1} \vc(\vx^K), [\vz^{K-1}+\beta_{K-1}\vf(\vx^K)]_+ \big)$, according to Definition \ref{def:eps-kkt-ineq}.

In the rest of the proof, we bound the maximum number of iPPM iterations needed to stop Algorithm \ref{alg:ippm}, and the number of APG iterations per iPPM iteration needed to stop Algorithm \ref{alg2}, for each iALM outer iteration. 

Denote $\vx_k^t$ as the $t$-th iPPM iterate within the $k$-th outer iteration of iALM. Then at $\vx_k^t$,  
% In the $t$-th iPPM iteration of the $k$-th iALM iteration, 
we use APG to minimize $F_k^t(\cdot):=\cL_{\beta_k}(\cdot,\vp^k)+\rho_0 \Vert \cdot - \vx_k^t \Vert^2$, which is $\tilde{L}_k := (\hat{L}_k + 2 \rho_0)$-smooth and $\rho_0$-strongly convex. Hence, by Lemma \ref{lem:total-iter-apg}, at most $T_k^{\mathrm{APG}}$ (that is independent of $t$) APG iterations are required to find an $\frac{\vareps}{4}$ stationary point of $F_k^t(\cdot)$, where
\begin{equation}\label{Tk-apg-ineq}
	T_k^{\mathrm{APG}} = \left\lceil \sqrt{\frac{\tilde{L}_k}{\rho_0}}\log \frac{1024\tilde{L}_k^2(\tilde{L}_k+\rho_0)D^2}{\vareps^2 \rho_0} \right\rceil + 1, \forall k \ge 0.
\end{equation}

In addition, recalling the definition of $\cL_{\beta}$ in \eqref{eq:AL_ineq} and by \eqref{eq:c-bound-ineq}, we have for all $k\ge1$,
\begin{align} 
	\cL_{\beta_k}(\vx^k,\vp^k) \le & B_0 + \frac{\vareps + B_0 + \Vert \vA \Vert y_{\max} + B_f z_{\max} }{v \beta_0 } \left(y_{\max}+\frac{\sigma( \vareps + B_0 + \Vert \vA \Vert y_{\max} + B_f z_{\max} )}{2v}\right)\sigma^{1-k} \nonumber \\
	& + z_{\max}\Vert [\vf(\vx^k)]_+ \Vert + \frac{\beta_k}{2}\Vert [\vf(\vx^k)]_+ \Vert^2 \nonumber \\
	\le & B_0 + \frac{\vareps + B_0 + \Vert \vA \Vert y_{\max} + B_f z_{\max} }{v \beta_0 } \left(y_{\max}+z_{\max}+\frac{\sigma( \vareps + B_0 + \Vert \vA \Vert y_{\max} + B_f z_{\max} )}{v}\right)\sigma^{1-k} \label{eq:L-ub-ineq} \\
	\le & B_0 + \tilde{c}, \forall k \ge 1, \nonumber 
\end{align}
where $B_0$ is given in (\ref{P3-eqn-33-ineq}a) and
$$\tilde{c} := \frac{\vareps+B_0+\Vert \vA \Vert y_{\max} + B_f z_{\max}}{v\beta_0} \left( y_{\max}+z_{\max}+\frac{\sigma(\vareps + B_0 + \Vert \vA \Vert y_{\max} + B_f z_{\max})}{v} \right).$$
Furthermore, 
\begin{align*}
	\cL_{\beta_0}(\vx^0,\vp^0) \le B_0 + \frac{\beta_0}{2} (\Vert \vA\vx^{0}-\vb \Vert^2+\Vert [\vf(\vx^0)]_+ \Vert^2),
\end{align*}
and $\forall k \ge 0, \forall \vx \in \dom (h), $
\begin{align}\label{eq:L-lb-ineq}
	\cL_{\beta_k}(\vx,\vp^k) \ge f_0(\vx) + \langle \vy^k,\vA\vx-\vb \rangle - \frac{\Vert \vz^k \Vert^2}{2\beta_k} \ge -B_0-y_{\max} \bar{B}_c - \frac{z_{\max}^2}{2\beta_0 \sigma^k},
\end{align}
where $\bar{B}_c$ is given in (\ref{P3-eqn-33-ineq}d).

Combining all three inequalities above with Theorem \ref{thm:ippm-compl} and $\rho_0$-weak convexity of $\cL_{\beta_k}(\cdot,\vp^k)$, we conclude at most $T_k^{\mathrm{PPM}}$ iPPM iterations are needed to guarantee that $\vx^{k+1}$ is an $\vareps$ stationary point of $\cL_{\beta_k}(\cdot, \vp^k  )$, with %111
\small{
	\begin{align}\label{Tk-ppm-ineq}
		T_k^{\mathrm{PPM}} &= \left\lceil \frac{32 \rho_0 }{\vareps^2} \left( 2B_0+y_{\max}\bar{B}_c +\frac{z_{\max}^2}{2\beta_0 \sigma^k} +\tilde{c} \right) \right\rceil, \forall k \ge 1\\ 
		T_0^{\mathrm{PPM}} &= \left\lceil\frac{32\rho_0}{\vareps^2} \left( 2B_0+y_{\max}\bar{B}_c +\frac{z_{\max}^2}{2\beta_0 \sigma^k} + \frac{\beta_0}{2} (\Vert \vA\vx^0-\vb \Vert^2+\Vert [\vf(\vx^0)]_+ \Vert^2) \right)\right\rceil. \label{T0-ppm-ineq}
	\end{align}
}
\normalsize

Consequently, we have shown that at most $T$ total APG iterations are needed to find an $\vareps$-KKT point of \eqref{eq:nc+c}, where
\begin{equation} \label{eq:T-ineq}
	T = \sum_{k=0}^{K-1} T_k^{\mathrm{PPM}} T_k^{\mathrm{APG}},
\end{equation}
with $K$ given in \eqref{eq:K-ineq}, $T_k^{\mathrm{APG}}$ given in \eqref{Tk-apg-ineq}, and $T_k^{\mathrm{PPM}}$ given in \eqref{Tk-ppm-ineq} and \eqref{T0-ppm-ineq}. 

The result in \eqref{eq:T-ineq} immediately gives us the following complexity results.

By \eqref{eq:K-ineq}, we have $K = \tilde{O}(1)$ and $\beta_K = O(\vareps^{-1})$. Hence from \eqref{eq:hat_L_k}, we have $\hat{L}_k = O(\vareps^{-1}), \forall k \ge 0 $. Then by \eqref{Tk-apg-ineq}, $T_k^{\mathrm{APG}} = \tilde{O}(\vareps^{-\frac{1}{2}}), \forall k \ge 0$, and by \eqref{Tk-ppm-ineq} and \eqref{T0-ppm-ineq}, we have %111111
$T_k^{\mathrm{PPM}} = O(\vareps^{-2}), \forall k\ge0$. Therefore, in \eqref{eq:T-ineq}, $T = \tilde{O}(\vareps^{-\frac{5}{2}})$. This completes the proof.
\end{proof}

%\color{black}

%-------------------------------------------
\section{ADDITIONAL TABLES}
We provide more detailed experimental results on the LCQP and EV problems to demonstrate the empirical performance of the proposed iALM from another perspective. We compare our method with the iALM in~\citep{sahin2019inexact} on LCQP and EV, and the HiAPeM in~\citep{li2020augmented} on LCQP. %when the constraint is convex. 

For each method, we report the primal residual, dual residual, running time (in seconds), and the number of gradient evaluation, shortened as \verb|pres|, \verb|dres|, \verb|time|, and \verb|#Grad|, respectively. The results for all trials are shown in Tables \ref{table:qp-small-1} and \ref{table:qp-large-1} for the LCQP problem, and in Tables \ref{table:ev-small} and \ref{table:ev-large} for the EV problem. From the results, we conclude that for both of the LCQP and EV problems, to reach the same-accurate KKT point of each tested instance, the proposed improved iALM needs significantly fewer gradient evaluations and takes far less time than all other compared methods.

\setlength{\tabcolsep}{3pt}

\begin{table}[h]\caption{Results by the proposed improved iALM, the iALM by \cite{sahin2019inexact}, and the HiAPeM by \cite{li2020augmented} on solving a $1$-weakly convex LCQP \eqref{eq:ncQP} of size $m=10$ and $n=200$. }\label{table:qp-small-1} 
	\begin{center}
		\resizebox{1 \textwidth}{!}{ % < 1.
			\begin{tabular}{|c||cccc|cccc|cccc|cccc|} % update hiapem!
				\hline
				trial & pres & dres & time & \#Grad & pres & dres & time & \#Grad & pres & dres & time & \#Grad & pres & dres & time & \#Grad \\\hline\hline 
				&\multicolumn{4}{|c|}{proposed improved iALM} & \multicolumn{4}{|c|}{iALM by \cite{sahin2019inexact}} &\multicolumn{4}{|c|}{HiAPeM with $N_0 = 10, N_1 = 2$} &\multicolumn{4}{|c|}{HiAPeM with $N_0 = 1, N_1 = 10^6$} \\\hline
				1 	 & 2.29e-4 & 8.31e-4 & 2.09 & 47468 & 7.06e-4 & 1.00e-3 & 15.56 & 1569788  
				& 3.77e-5 & 9.64e-4 & 2.61 & 150653 & 2.28e-4 & 7.25e-4 & 3.93 & 323020\\ 
				2 	 & 1.94e-4 & 9.24e-4 & 1.00 & 26107 & 1.94e-4 & 1.00e-3 & 6.68 & 713807 
				& 4.02e-4 & 6.45e-4 & 2.51 & 154519 & 3.72e-4 & 4.83e-4 & 6.23 & 531680\\
				3 	 & 2.23e-4 & 3.29e-4 & 1.35 & 33392 & 1.40e-4 & 1.00e-3 & 5.37 & 636043 
				& 7.16e-5 & 6.37e-4 & 2.06 & 135379 & 3.41e-4 & 9.35e-4 & 5.54 & 458308\\
				4 	 & 6.58e-4 & 7.18e-4 & 2.21 & 41325 & 6.58e-4 & 1.00e-3 & 9.39 & 1048446 
				& 1.33e-4 & 8.29e-4 & 1.53 & 82087 & 3.49e-4 & 7.10e-4 & 4.67 & 389567\\
				5 	 & 2.22e-4 & 5.43e-4 & 1.04 & 29252 & 1.80e-4 & 1.00e-3 & 9.56 & 1100625 
				& 1.46e-4 & 4.60e-4 & 3.11 & 216479 & 2.95e-4 & 9.21e-4 & 8.97 & 735546\\
				6 	 & 1.75e-4 & 5.04e-4 & 1.25 & 34488 & 8.96e-4 & 1.00e-3 & 11.03 & 1339160 
				& 9.82e-5 & 7.36e-4 & 0.64 & 31099 & 3.35e-4 & 7.94e-4 & 3.32 & 272395\\
				7 	 & 4.03e-4 & 5.04e-4 & 1.10 & 28636 & 1.98e-4 & 1.00e-3 & 7.97 & 927075 
				& 3.00e-4 & 7.38e-4 & 3.00 & 199126 & 3.89e-4 & 8.39e-4 & 6.69 & 544974\\
				8 	 & 5.83e-4 & 4.58e-4 & 1.70 & 39719 & 8.62e-4 & 1.00e-3 & 8.77 & 982164 
				& 3.93e-4 & 7.13e-4 & 2.85 & 189818 & 4.62e-4 & 9.09e-4 & 4.18 & 338027\\
				9 	 & 5.98e-4 & 3.70e-4 & 1.66 & 37379 & 5.98e-4 & 1.00e-3 & 5.23 & 560382 
				& 1.45e-4 & 9.63e-4 & 4.34 & 286666 & 2.80e-4 & 9.45e-4 & 9.78 & 751636\\ 
				10 	 & 8.11e-4 & 3.07e-4 & 1.05 & 25170 & 8.23e-4 & 1.00e-3 & 30.75 & 3474626 
				& 2.45e-4 & 8.45e-4 & 4.49 & 278127 & 4.65e-4 & 9.30e-4 & 7.47 & 594326\\\hline
				avg. & 4.10e-4 & 5.49e-4 & 1.44 & 34294 & 5.26e-4 & 1.00e-3 & 11.03 & 1235210 
				& 1.97e-4 & 7.53e-4 & 2.71 & 172395 & 3.52e-4 & 8.20e-4 & 6.08 & 493948 \\\hline
			\end{tabular}
		}
	\end{center}
\end{table}

\begin{table}[h]\caption{Results by the proposed improved iALM, the iALM by \cite{sahin2019inexact}, and the HiAPeM by \cite{li2020augmented} on solving a 1-weakly convex LCQP \eqref{eq:ncQP} of size $m=100$ and $n=1000$.}\label{table:qp-large-1} 
	\begin{center}
		\resizebox{1 \textwidth}{!}{
			\begin{tabular}{|c||cccc|cccc|cccc|cccc|} % update hiapem!
				\hline
				trial & pres & dres & time & \#Grad & pres & dres & time & \#Grad & pres & dres & time & \#Grad & pres & dres & time & \#Grad \\\hline\hline 
				&\multicolumn{4}{|c|}{proposed improved iALM} & \multicolumn{4}{|c|}{iALM by \cite{sahin2019inexact}} &\multicolumn{4}{|c|}{HiAPeM with $N_0 = 10, N_1 = 2$} &\multicolumn{4}{|c|}{HiAPeM with $N_0 = 1, N_1 = 10^6$}\\\hline
				1 	 & 4.36e-4 & 8.65e-4 & 109.90 & 220937 & 5.80e-4 & 8.1e-3 & 2281.8 & 13098032  
				& 1.05e-4 & 9.96e-4 & 550.18 & 2823733 & 5.35e-4 & 8.24e-4 & 897.68 & 5228014\\
				2 	 & 4.07e-4 & 7.47e-4 & 144.23 & 280500 & 5.90e-4 & 1.1e-3 & 1682.5 & 10207308 
				& 1.67e-4 & 9.04e-4 & 597.60 & 2879969 & 5.51e-4 & 8.05e-4 & 740.28 & 4540532\\
				3 	 & 5.99e-4 & 9.70e-4 & 99.37 & 228324 & 8.73e-4 & 1.00e-3 & 1281.3 & 8587300 
				& 8.22e-4 & 6.92e-4 & 474.76 & 2697241 & 5.67e-4 & 9.97e-4 & 1314.3 & 6986241\\
				4 	 & 4.59e-4 & 8.53e-4 & 179.91 & 311724 & 4.05e-4 & 2.1e-3 & 1548.6 & 8474538 
				& 4.10e-5 & 8.20e-4 & 747.18 & 3804152 & 5.16e-4 & 8.62e-4 & 741.43 & 4281876\\
				5 	 & 6.69e-4 & 9.57e-4 & 162.06 & 367321 & 3.96e-4 & 1.33e-2 & 1802.0 & 12464010 
				& 1.17e-4 & 9.82e-4 & 603.44 & 3008964 & 5.16e-4 & 9.11e-4 & 667.01 & 3830799\\
				6 	 & 6.85e-4 & 8.84e-4 & 104.30 & 200256 & 1.49e-4 & 1.6e-3 & 2010.8 & 13071595 
				& 5.16e-4 & 9.11e-4 & 667.01 & 3830799 & 5.79e-4 & 9.82e-4 & 1396.0 & 8174370\\
				7 	 & 6.10e-4 & 9.30e-4 & 124.50 & 244074 & 4.56e-4 & 1.4e-3 & 1843.8 & 11843900 
				& 4.78e-4 & 7.73e-4 & 712.36 & 3658514 & 5.53e-4 & 9.25e-4 & 615.96 & 3609496\\
				8 	 & 8.47e-4 & 7.40e-4 & 122.57 & 261206 & 4.81e-4 & 2.3e-3 & 1520.6 & 10298480 
				& 7.69e-4 & 6.36e-4 & 402.49 & 2036351 & 5.47e-4 & 9.78e-4 & 520.07 & 2681970\\
				9 	 & 5.16e-4 & 8.91e-4 & 165.14 & 316827 & 2.08e-4 & 1.3e-3 & 2334.9 & 14446205 
				& 5.08e-4 & 4.83e-4 & 561.30 & 3268825 & 5.43e-4 & 8.26e-4 & 1059.6 & 6958198\\
				10 	 & 3.46e-4 & 9.72e-4 & 142.67 & 352781 & 3.13e-4 & 1.5e-3 & 1519.9 & 9370342 
				& 8.36e-5 & 9.60e-4 & 542.09 & 2807758 & 5.54e-4 & 8.98e-4 & 1963.1 & 11091867\\\hline
				avg. & 5.57e-4 & 8.81e-4 & 135.47 & 278395 & 4.45e-4 & 3.37e-3 & 1782.6 & 11186171 
				& 3.61e-4 & 8.16e-4 & 585.84 & 3081631 & 5.46e-4 & 9.01e-4 & 991.54 & 5738336 \\\hline
			\end{tabular}
		}
	\end{center}
\end{table}

\setlength{\tabcolsep}{5pt}

\begin{table}[h]\caption{Results by the proposed improved iALM and the iALM by \cite{sahin2019inexact} on solving a generalized eigenvalue problem \eqref{eq:EV} of size $n=200$. }\label{table:ev-small} 
	\begin{center}
		%\resizebox{1 \textwidth}{!}
		{\small
			\begin{tabular}{|c||ccccc|cccc|} 
				\hline
				trial & pres & dres & time & \#Obj & \#Grad & pres & dres & time & \#Grad\\\hline\hline 
				&\multicolumn{5}{|c|}{proposed improved iALM} & \multicolumn{4}{|c|}{iALM by \cite{sahin2019inexact}}  \\\hline
				1 	 & 1.39e-4 & 9.98e-4 & 1.09 & 46140 & 38245 & 1.39e-4 & 1.00e-3 & 2.84 & 233367  
				\\ 
				2 	 & 5.69e-4 & 9.87e-4 & 0.48 & 31456 & 25592 & 5.69e-4 & 1.00e-3 & 1.32 & 144750 
				\\
				3 	 & 2.57e-4 & 9.92e-4 & 0.60 & 32933 & 26112 & 2.57e-4 & 1.00e-3 & 2.21 & 150136 
				\\
				4 	 & 1.45e-4 & 9.98e-4 & 0.59 & 29408 & 25203 & 1.45e-4 & 1.00e-3 & 2.24 & 153485 
				\\
				5 	 & 1.52e-4 & 1.00e-3 & 0.93 & 37477 & 27434 & 1.51e-4 & 1.00e-3 & 1.63 & 153596 
				\\
				6 	 & 2.34e-4 & 9.71e-4 & 0.29 & 17765 & 14353 & 2.34e-4 & 1.00e-3 & 0.59 & 60643 
				\\
				7 	 & 9.06e-4 & 9.98e-4 & 0.42 & 26032 & 20886 & 9.06e-4 & 1.00e-3 & 1.05 & 109958 
				\\
				8 	 & 6.57e-4 & 9.97e-4 & 0.42 & 24184 & 19974 & 6.57e-4 & 1.00e-3 & 1.53 & 104508 
				\\
				9 	 & 2.44e-4 & 9.95e-4 & 0.45 & 27125 & 22390 & 2.44e-4 & 1.00e-3 & 1.20 & 126874 
				\\ 
				10 	 & 2.16e-4 & 9.98e-4 & 0.49 & 31238 & 26527 & 2.16e-4 & 1.00e-3 & 1.55 & 160941 
				\\\hline
				avg. & 3.52e-4 & 9.03e-4 & 0.58 & 30376 & 24672 & 3.52e-4 & 1.00e-3 & 1.62 & 139823 
				\\\hline
			\end{tabular}
		}
	\end{center}
\end{table}

\begin{table}[h]\caption{Results by the proposed improved iALM and the iALM by \cite{sahin2019inexact} on solving a generalized eigenvalue problem \eqref{eq:EV} of size $n=1000$. }\label{table:ev-large} 
	\begin{center}
		%\resizebox{1 \textwidth}{!}
		{\small
			\begin{tabular}{|c||ccccc|cccc|} 
				\hline
				trial & pres & dres & time & \#Obj & \#Grad & pres & dres & time & \#Grad\\\hline\hline 
				&\multicolumn{5}{|c|}{proposed improved iALM} & \multicolumn{4}{|c|}{iALM by \cite{sahin2019inexact}}  \\\hline
				1 	 & 6.87e-4 & 9.78e-4 & 60.77 & 56805 & 42626 & 6.86e-4 & 2.5e-3 & 5671.9 & 9329514  
				\\ 
				2 	 & 1.39e-4 & 9.85e-4 & 63.29 & 80454 & 60765 & 1.38e-4 & 4.3e-3 & 8128.5 & 13295555 
				\\
				3 	 & 5.94e-4 & 9.92e-4 & 60.87 & 70884 & 49616 & 5.94e-4 & 1.00e-3 & 5070.0 & 8585272
				\\
				4 	 & 4.20e-4 & 9.97e-4 & 51.08 & 73494 & 51707 & 4.20e-4 & 1.00e-3 & 6045.3 & 10008459 
				\\
				5 	 & 6.27e-4 & 9.99e-4 & 65.20 & 72763 & 52095 & 6.27e-4 & 1.6e-3 & 6733.4 & 10820619 
				\\
				6 	 & 2.92e-4 & 9.82e-4 & 36.16 & 41402 & 32164 & 2.90e-4 & 3.1e-3 & 3936.9 & 6588034 
				\\
				7 	 & 3.35e-4 & 9.95e-4 & 87.89 & 104069 & 74808 & 3.35e-4 & 2.1e-3 & 9183.8 & 15689148 
				\\
				8 	 & 4.47e-4 & 9.91e-4 & 51.12 & 60555 & 45578 & 4.46e-4 & 2.6e-3 & 5300.0 & 9039022 
				\\
				9 	 & 4.02e-4 & 9.91e-4 & 44.23 & 51399 & 39064 & 4.01e-4 & 2.6e-3 & 4771.7 & 8466906 
				\\ 
				10 	 & 9.32e-4 & 9.95e-4 & 79.42 & 98130 & 69322 & 9.32e-4 & 1.6e-3 & 8846.8 & 14688990 
				\\\hline
				avg. & 4.88e-4 & 9.91e-4 & 60.00 & 70996 & 51775 & 4.87e-4 & 2.24e-3 & 5975.1 & 10651152 
				\\\hline
			\end{tabular}
		}
	\end{center}
\end{table}

In Table \ref{table:qp_add} below, we also compare our proposed iALM with the iPPP method in \citep{lin2019inexact-pp} on one representative instance of the LCQP problem in Section \ref{sec:experiment-ncvx}. For iPPP, we tune $\beta_k = \beta_0 \cdot k$ with $\beta_0 = 10$.

\begin{table}[htbp]\caption{Results by the proposed improved iALM and the iPPP by \cite{lin2019inexact-pp} on solving an LCQP problem \eqref{eq:EV} of size $m = 100$ and $n = 1000$. }\label{table:qp_add} % try: h -> htbp
%\YX{It is weird to put a table here. Can we replace Table 3 by this one?}
	\begin{center}
		%\resizebox{1 \textwidth}{!}
		{\small
			\begin{tabular}{|c||cccc|} % removed middle 3 rows: iALM Volkan & HiAPeM x 2. 
	\hline
	method & pres & dres & time & \#Grad 
	\\\hline\hline 
	proposed iALM & 4.08e-4 & 7.47e-4 & 293.2 & 280500 \\ iPPP in (Lin et al., 2019) & 9.98e-4 & 9.98e-4 & 930.7 & 1644496 \\\hline
\end{tabular}
		}
	\end{center}
\end{table}

\FloatBarrier % added, to place table before next sec.

\section{BETTER SUBROUTINE BY INEXACT PROXIMAL POINT METHOD}
We mentioned at the end of Section \ref{sec:alg} that our iPPM is more stable and more efficient on solving nonconvex subproblems in the form of \eqref{eq:comp-prob} than the subroutine by \cite{sahin2019inexact}. An intuitive explanation is as follows. The iPPM tackles the nonconvex problem by solving a sequence of perturbed strongly convex problems, which can be solved by Nesterov's accelerated first-order method. %with more stable solution routines. 
In contrast, the subroutine of the iALM by \cite{sahin2019inexact} applies Nesterov's acceleration technique directly while performing proximal gradient update to solve the nonconvex problem. %combines both nonconvexity and Nesterov acceleration together during each update. 
We believe such a combination of acceleration with nonconvexity attributes to the instability or inefficiency of the iALM by \cite{sahin2019inexact}.

In this section, we provide numerical results to support the claim above. In Figure \ref{fig:dres} below, we plot representative trajectories of the violation of stationarity for the first subproblem in all of our experiments (namely, LCQP, EV and clustering problems) using our iPPM and the subsolver by \cite{sahin2019inexact} started from the same initial points, where the violation of stationarity is measured as $\dist\big(\vzero,\partial F(\vx)\big)$. %\yx{These two subroutines start from the same initial points?}
From the figure, we can clearly observe that our iPPM method is more efficient than the subsolver by \cite{sahin2019inexact}. %The advantage of our iPPM method is even more significant when the error tolerance is small. \yx{Do you have evidence to support this claim?}

\begin{figure}[h] % combine 6 figures into 1 to save space.
	\begin{center}
		\resizebox{1 \textwidth}{!}{
			\begin{tabular}{ccc}
				instance of LCQP \eqref{eq:ncQP} subproblem & instance of EV \eqref{eq:EV} subproblem & clustering \eqref{eq:cluster} subproblem with Iris data \\
				size $m = 20$ and $n = 100$ & size $n = 200$ & size $(n,s,r) = (150,100,6)$ \\[-0.1cm]
				\includegraphics[width=0.35\textwidth]{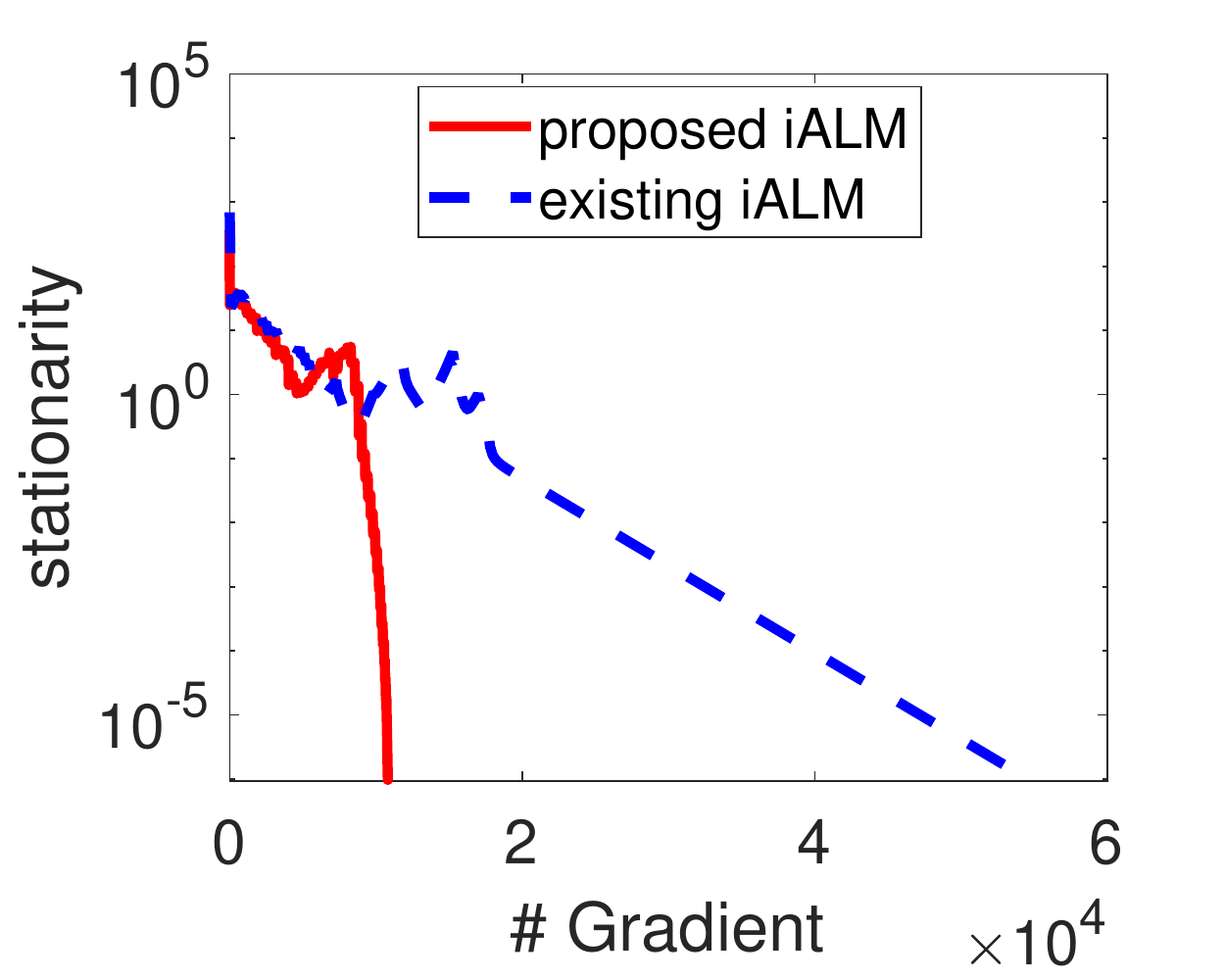} & \includegraphics[width=0.35\textwidth]{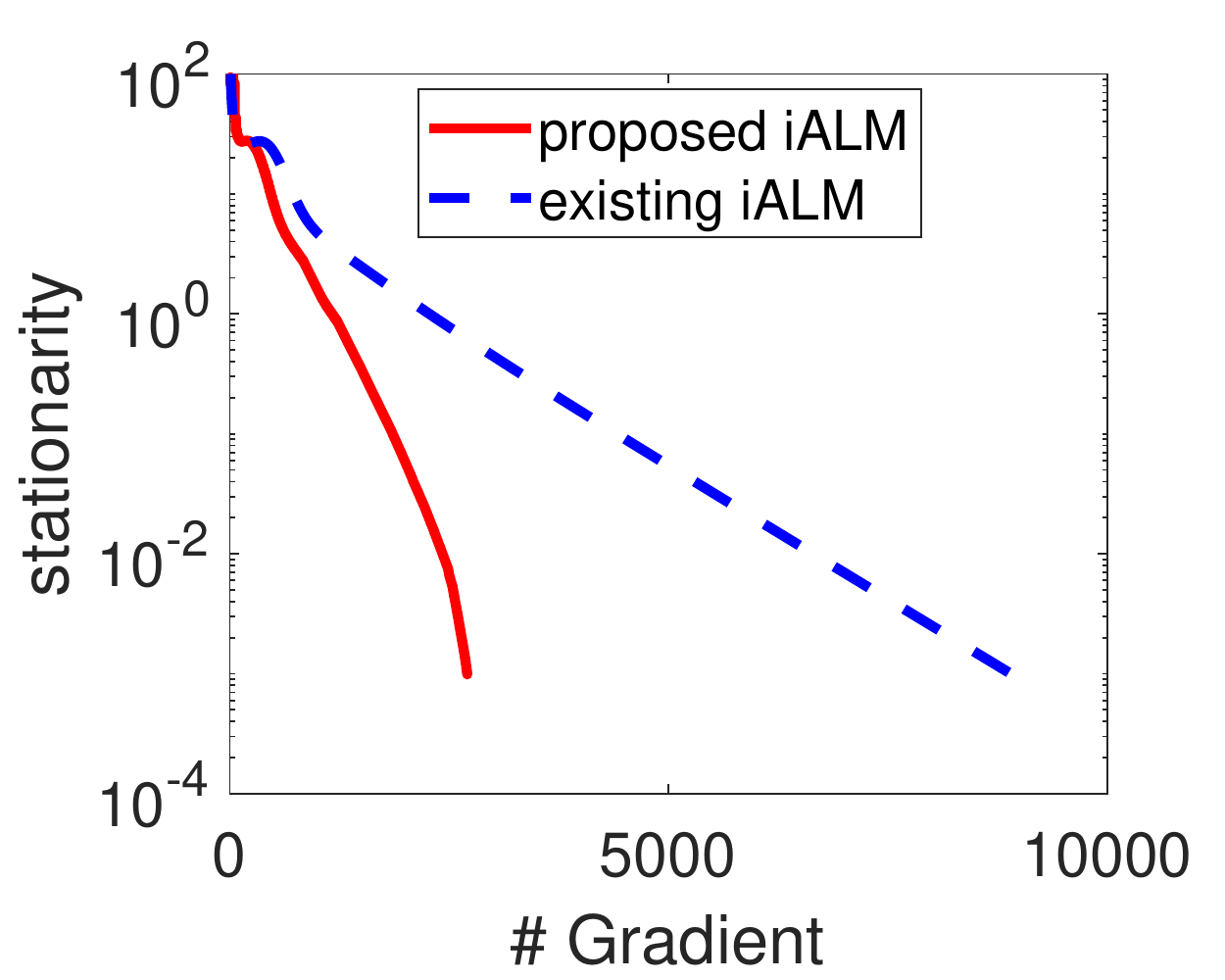} & \includegraphics[width=0.35\textwidth]{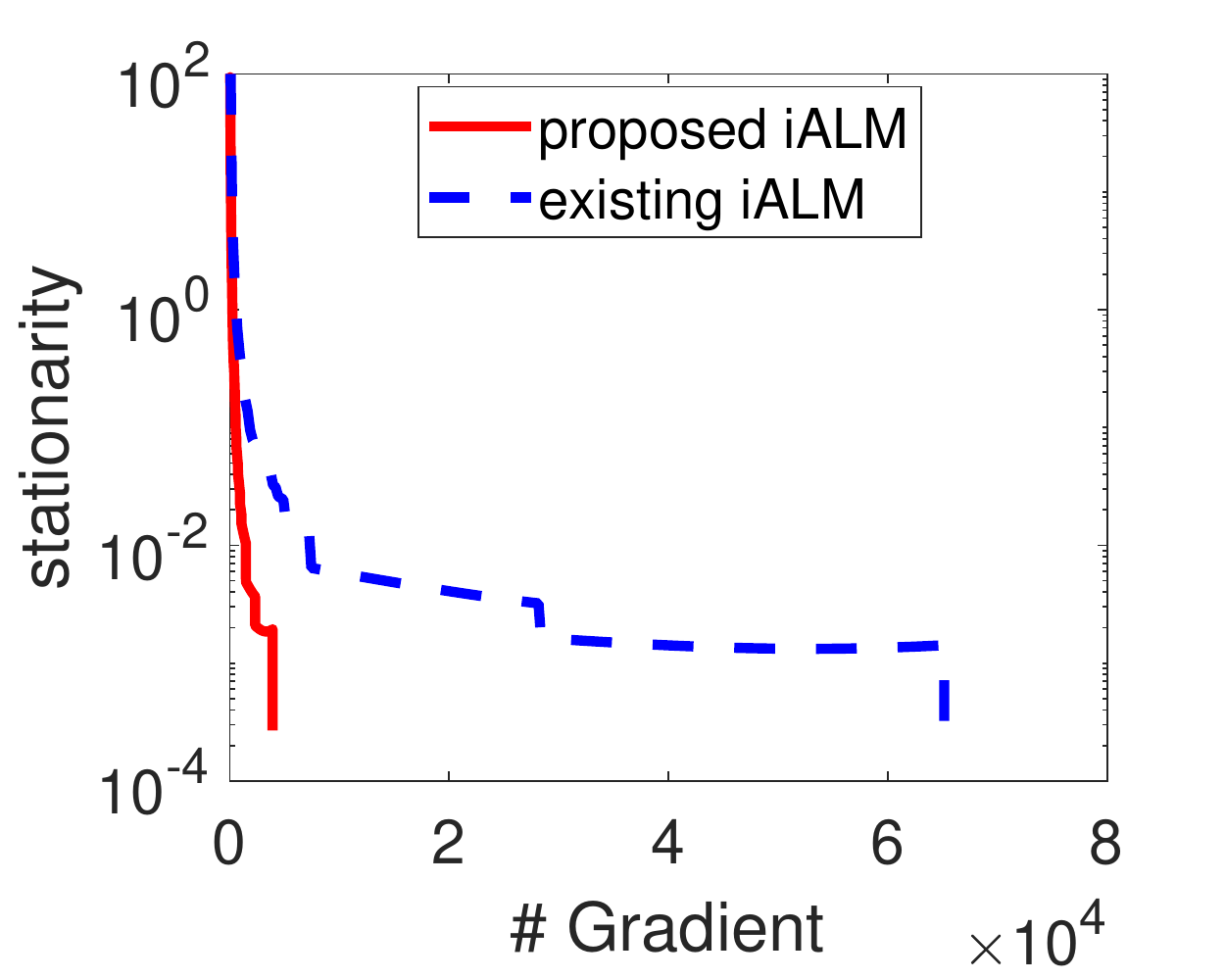} \\[0.3cm]
			\end{tabular}
		}
	\end{center}
	\vspace{-0.2cm}
	\caption{Comparison of iPPM and the subsolver of an existing iALM in \citep{sahin2019inexact} on solving the first subproblem of LCQP, EV, and clustering  problems. Each plot shows the violation of stationarity.}\label{fig:dres}
\end{figure}

\end{document}